\titleformat{\subsection}{\it}{\thesubsection.\enspace}{1.5pt}{}
\titleformat{\subsubsection}{\it}{\thesubsubsection.\enspace}{1.5pt}{}
\newtheorem{theo}{Theorem}[section]
\newtheorem{lemm}[theo]{Lemma}
\newtheorem{coro}[theo]{Corollary}
\newtheorem{prop}{Proposition}[section]
\newtheorem{rema}{Remark}[section]
\numberwithin{equation}{section}
\def\ma{\mathcal{A}}
\def\l{\left\|}
\def\r{\right\|}
\begin{document}
\title{Decay rates for the Viscous Incompressible MHD with and without Surface Tension}
\author{ Boling Guo$^a$,  Lan Zeng$^{b,*}$, Guoxi Ni$^a$ }
\date{}
\maketitle
\begin{center}
\begin{minipage}{120mm}
\emph{\small $^a$Institute of Applied Physics and Computational Mathematics, China Academy of Engineering Physics,
 Beijing, 100088, P. R. China \\
$^b$Graduate School of China Academy of Engineering Physics, Beijing, 100088, P. R. China \\}
\end{minipage}
\end{center}
{\bf Abstract.}
In this paper, we consider a layer of a viscous incompressible electrically  conducting fluid interacting with the magnetic filed in a horizontally periodic setting. The upper boundary bounded by a free boundary and below bounded by a flat rigid interface.
We prove the global well-posedness of the problem for both the case with and without surface tension. Moreover, we show that the global solution decays to the equilibrium exponentially in the case with surface tension, however the global solution decays to the equilibrium at an almost exponential rate in the case without surface tension.

\bigskip
\noindent {\bf Key Words:}
 MHD, global well-posedness, decay rates, with and without surface tension
 
\bigskip
\noindent {\bf 2010 Mathematics Subject Classification:}~35Q35,~35R35,~76N10,~76W05

 \footnotetext{Corresponding Author: zenglan1206@126.com (L.Zeng).

~~This work is supported by NSFC under grant numbers 11731014, 11571254.}

\section{Introduction}
\subsection{Formulation in Eulerian Coordinates}
We consider the motion of an viscous incompressible electrically conducting fluid interacting with the magnetic field in a 3D moving domain
\begin{equation}
\Omega(t)= \{ y\in \Sigma\times R| -1<y_{3} <\eta(y_{1},y_{2},t) \}.
\end{equation}
 We assume $\Omega(t)$ is horizontally periodic by setting $\Sigma= (L_{1}\mathbb{T}) \times (L_{2}\mathbb{T})$ for $\mathbb{T}=\mathbb{R}/\mathbb{Z}$ the 1-torus and $L_{1},L_{2}>0$ periodicity lengths. The upper boundary $\{y_{3}= \eta(y_{1},y_{2},t)\}$ is a free surface that is the graph of the unknown function $\eta: \Sigma\times \mathbb{R}^{+}\rightarrow \mathbb{R}$. The dynamics of the fluid is described by the velocity, the pressure and the magnetic field, which are given for each $t\geq0$ by $\tilde{u}(t,\cdot):\Omega(t)\rightarrow \mathbb{R}^3$, $\tilde{p}(t,\cdot):\Omega(t)\rightarrow \mathbb{R}$ and $\tilde{B}(t,\cdot):\Omega(t)\rightarrow \mathbb{R}^3$, respectively. For each $t>0$, $(\tilde{u},\tilde{p},\tilde{B},\eta)$ is required to satisfy the following free boundary problem for the incompressible viscid and resistive magnetohydrodynamic equations (MHD):
 \begin{equation}\label{1.2}
 \left\{
 \begin{aligned}
&\partial_{t}\tilde{u} + \tilde{u} \cdot \nabla \tilde{u}- \mu \Delta \tilde{u}+\nabla \tilde{p}= \tilde{B}\cdot\nabla \tilde{B} , ~~~~~~~~~{\rm in}~\Omega(t)\\
&{\rm div} \tilde{u}=0,~~~~~~~~~~~~~~~~~~~~~~~~~~~~~~~~~~~~~~~~~~~~{\rm in}~\Omega(t)\\
&\partial_{t} \tilde{B}+ \tilde{u}\cdot \nabla \tilde{B}- \kappa \Delta \tilde{B}=\tilde B\cdot\nabla u,~~~~~~~~~~~~~~~{\rm in}~\Omega(t)~~\\
&{\rm div}B=0~~~~~~~~~~~~~~~~~~~~~~~~~~~~~~~~~~~~~~~~~~~~~{\rm in}~\Omega(t)\\
&\partial_t\eta=u_3-u_1\partial_{y_1}\eta-u_2\partial_{y_2}\eta ~~~~~~~~~~~~~~~~~~~~~{\rm on} \{y_3=\eta(t,y_1,y_2)\}\\
&(\tilde p I-\mu \mathbb{D}(\tilde u))\nu=g\eta\nu+\sigma M\nu,~\tilde B=\bar B~~~~~~~~{\rm on} \{y_3=\eta(t,y_1,y_2)\}\\
&\tilde{u}=0,~\tilde B=\bar B~~~~~~~~~~~~~~~~~~~~~~~~~~~~~~~~~~~~~~~{\rm on} \{y_3=-1\}.\\
\end{aligned}
\right.
\end{equation}
Here $\nu$ is the outward-pointing unit normal on $\{y_3=\eta\}$, $\bar B$ is the constant magnetic field in the outside of the fluid. $\mu>0,~\kappa >0$  are the kinematic viscosity and magnetic diffusion coefficient, respectively. The first four equations in \eqref{1.2} are the usual viscous incompressible MHD equations. The fifth equation implies that the free surface is advected with the fluid. The sixth equation is the balance of the stress on the free surface, where $I$ is the $3\times3$ identity matrix, and $(\mathbb{D}\tilde u)_{ij}=\partial_i\tilde u_j+\partial_j\tilde u_i$ is the symmetric gradient of $\tilde u$. The tensor $(\tilde p I-\mu\mathbb{D}(\tilde u))$ is known as the viscous stress tensor, $g$ is the strength of gravity. $M$ is the mean curvature of the free surface and is given by $M=\partial_i(\partial_i\eta/\sqrt{1+|D\eta|^2})$. Note that, in \eqref{1.2}, we have shifted the gravitational forcing to the free boundary and eliminated the constant atmospheric pressure, $P_{atm}$, the magnetic pressure $|\tilde B|^2/2$ and the constant outside magnetic pressure $|\bar B|^2/2$, in the usual way by adjusting the actual pressure $\bar p$ according to
\begin{equation}
\tilde p=\bar p+gy_3-P_{atm}+|\tilde B|^2/2-|\bar B|^2/2.
\end{equation}
To complete the statement of the problem, we assume the problem satisfies the following initial conditions.
\begin{equation}
\eta(0)=\eta_0,~ \tilde u(0)=u_0,~\tilde B(0)=B_0,
\end{equation}
furthermore, we will assume $\eta_0>-1$, which means at the initial time the boundary do not intersect with each other.

In the global well-posedness theory of the problem \eqref{1.2}, we suppose that the initial surface function satisfies the following ``zero average'' condition
\begin{equation} \label{1.3}
\frac{1}{L_{1} L_{2}} \int_{\Sigma} \eta_{0}=0.
\end{equation}
Notice that for sufficiently regular solutions to the periodic problem, the condition \eqref{1.3} persists in time, indeed, according to $\partial_{t} \eta= \tilde{u}\cdot \nu \sqrt{1+ (\partial_{y_1}\eta)^2+(\partial_{y_2}\eta)^2}$,
\begin{equation}
\frac{d}{dt} \int_{\Sigma} \eta = \int_{\Sigma} \partial_{t} \eta = \int_{\{y_3=\eta(t,y_1,y_2)\}} \tilde{u}\cdot \nu= \int_{\Omega(t)} {\rm div} \tilde{u}=0,
\end{equation}
which allows us to apply Poincar${\rm\acute{e}}$'s inequalities on $\Sigma$ for $\eta$ for all $t\geq0$.

\subsection{Formulation in flattening coordinates}

The Moving free boundary and the subsequent change of the domain generate plentiful mathematical difficulties. To overcome these, as usual, we will use a coordinate transformation to flatten the free surface. Here we will not use a Lagrangian coordinate transformation, but rather a flatting transformation introduced by Beale \cite{Beale}. To this end, we consider the fixed equilibrium domain
\begin{equation}
\Omega:=\{x\in\Sigma\times\mathbb{R}|-1<x_3<0\},
\end{equation}
for which we will write the coordinates as $x\in \Omega$. We will think of $\Sigma$ as the upper boundary of $\Omega$, and we will write $\Sigma_{-1}:= \{x_{3}=-1 \}$ for the lower boundary. We continue to view $\eta$ as a function on $\Sigma \times R^{+}$. We then define
\begin{equation*}
\bar{\eta}:= \mathcal{P}\mathcal{\eta} ={\rm~harmonic~extension~of~\eta~into~the~lower~half~space,}
\end{equation*}
where $\mathcal{P}\eta$ is defined by \eqref{7.1}. The harmonic extension $\bar{\eta}$ allows us to flatten the coordinate domain via the mapping
\begin{equation}\label{1.4}
\Omega \ni x \mapsto (x_{1},x_{2}, x_{3}+ \bar{\eta}(x,t)(1+x_{3})):= \Phi(x,t)= (y_{1},y_{2},y_{3})\in \Omega(t),
\end{equation}
Note that $\Phi(\Sigma,t)= \{y_{3}=\eta(y_{1},y_{2},t)\} $ and $\Phi(\cdot,t)|_{\Sigma_{-1}}= Id_{\Sigma_{-1}}$, i.e. $\Phi$ maps $\Sigma$ to the free surface and keeps the lower surface fixed. We have
\begin{equation}
\nabla \Phi= \left(
\begin{aligned}
&1~~~0~~~0 \\
&0~~~1~~~0 \\
&A~~B~~J
\end{aligned}
\right)
~~{\rm and}~~
\mathcal{A} := (\nabla \Phi^{-1})^{T}=\left(
\begin{aligned}
&1~~0~~-AK \\
&0~~1~~-BK \\
&0~~0~~~~~K
\end{aligned}
\right)
\end{equation}
for
\begin{equation}\label{1.10}
A= \partial_{1} \bar{\eta} \widetilde{b},~~B= \partial_{2}\bar{\eta} \widetilde{b}, ~~\widetilde{b}=(1+x_{3})
\end{equation}
\begin{equation}\label{1.11}
J=1+ \bar{\eta}+ \partial_{3} \bar{\eta}\widetilde{b},~~K=J^{-1}.
\end{equation}
Here $J={\rm det}( \nabla \Phi)$ is the Jacobian of the coordinate transformation. If $\eta$ is sufficiently small in an appropriate Sobolev space, then the mapping is a diffeomorphism. It allows us to transform the problem to one on the fixed spatial domain. Note that the following useful relation will be frequently used throughout this paper:
\begin{equation}\label{1.13}
\partial_k(J\mathcal{A}_{jk})=0.
\end{equation}
Without loss of generality, we will assume that $\mu=g=\kappa=1$. Indeed, a standard scaling argument allows us to scale so that $\mu=g=\kappa=1$. Furthermore, we define the transformed quantities as
\begin{equation*}
u(t,x) := \tilde{u}(t,\Phi(t,x)),~~p(t,x) :=\tilde{p}(t,\Phi(t,x)), ~~b(t,x) :=\tilde{B}(t,\Phi(t,x))-\bar B.
\end{equation*}
In the new coordinates, \eqref{1.2} can be written as
\begin{equation}\label{1.14}
\left\{
\begin{aligned}
&\partial_{t} u-\partial_{t}\bar{\eta}\tilde{b}K \partial_{3} u + u\cdot \nabla_{\ma } u-\Delta_{\ma}u+\nabla_\ma p=(b+\bar B)\cdot\nabla_{\ma}b,~~{\rm in}~\Omega~ \\
&{\rm div}_{\ma} u=0,~~~~~~~~~~~~~~~~~~~~~~~~~~~~~~~~~~~~~~~~~~~~~~~~~~~~~~~~~~~~~~~~~{\rm in}~\Omega~ \\
&\partial_{t}b-\partial_{t}\bar{\eta}\tilde{b}K \partial_{3}b + u\cdot \nabla_{\ma}b -\Delta_{\ma} b=(b+\bar B)\cdot\nabla_{\ma}u,~~~~~~~~~~~~~~{\rm in}~\Omega~ \\
&{\rm div}_{\ma} b=0,~~~~~~~~~~~~~~~~~~~~~~~~~~~~~~~~~~~~~~~~~~~~~~~~~~~~~~~~~~~~~~~~~~{\rm in}~\Omega~ \\
&(pI-\mathbb{D}_{\ma} u)\mathcal{N}=\eta \mathcal{N} +\sigma M\mathcal{N} ,~~b=0~~~~~~~~~~~~~~~~~~~~~~~~~~~~~~~~{\rm on}~\Sigma,~\\
&\partial_{t} \eta+ u_{1} \partial_{1}\eta +u_{2} \partial_{2}\eta =u_{3}, ~~~~~~~~~~~~~~~~~~~~~~~~~~~~~~~~~~~~~~~~~~~~~{\rm on}~\Sigma~\\
&u=0,~b=0,~~~~~~~~~~~~~~~~~~~~~~~~~~~~~~~~~~~~~~~~~~~~~~~~~~~~~~~~~~~~~~{\rm on}~\Sigma_{-1}\\
&u(x,0)=u_0(x),~b(x,0)=b_0(x),~\eta(x_1,x_2,0)=\eta_0(x_1,x_2).
\end{aligned}
\right.
\end{equation}
Here we have written the differential operators $\nabla_{\ma}$, ${\rm div}_{\ma}$, and $\Delta_{\mathcal{A}}$ with their actions given by $(\nabla_{\ma} f)_{i} := \ma_{ij}\partial_{j} f$, ${\rm div}_{\ma} X=\ma_{ij}\partial_{j} X_{i}$, and $\Delta_{\mathcal{A}} f= {\rm div}_{\mathcal{A}} \nabla_{\mathcal{A}} f$ for approximate $f$ and $X$; for $u\cdot \nabla_{\mathcal{A}}u$ we mean $(u\cdot \nabla_{\mathcal{A}}u)_{i} := u_{j}\mathcal{A}_{jk} \partial_{k} u_{i}$. We have also written $(\mathbb{D}_{\mathcal{A}} u)_{ij}= \ma_{ik} \partial_{k} u_{j}+ \ma_{jk} \partial_{k} u_{i}$. Also, $\mathcal{N}:=-\partial_1\eta e_1-\partial_2\eta e_2+e_3$ denotes the non-unit normal on $\Sigma$.

\subsection{Related works}

The problem of free boundary in fluid mechanics has been deeply studied in the field of mathematics, and there are a huge number of impressive results. Here, we only introduce briefly some works related to our problem.

When $B=0$ in model \eqref{1.2}, it reduces to the well known viscous surface wave problem. The reduced problem without surface tension was studied firstly by Beale \cite{Beale}, in which the local well-posedness in the Sobolev spaces had been proved. And Sylvester studied the global well-posedness by using Beale's method in \cite{Sylve}. For the periodic case, Hataya \cite{Hataya} proved the global existence of small solutions with an algebraic decay rate. In \cite{Guo,Guoo,Guooo}, Guo and Tice used a new two-tier energy method to proved the local well-posedness, the global solution decay to the equilibrium at an algebraic decay rate in the non-periodic case and decay to equilibrium at an almost exponential rate in the periodic case, respectively. For the case with surface tension, the global well-posedness was proved in the Sobolev spaces by Beale \cite{Bealee}, and Bae \cite{Bae} the globle solvability in Sobolev spaces via the energy method. Beale et.al \cite{Bealeee} and Nishida et.al \cite{Nishida} proved that the global solution obtained in \cite{Bealee} decays at an optimal algebraic rate in the non-periodic case and decays at an exponential decay rate in the periodic case, respectively. Tani \cite{Tani} and Tani et.al \cite{Tanii} considered the solvability of the problem with or without surface tension under the Beale-Solonnikov's function framework. Furthermore, in \cite{Tan} Tan and Wang proved the zero surface tension limit within a local time interval and the global one under the small initial data. Furthermore, in \cite{Kim,Tice,Ticee} Tice et.al. researched the effect of the more general surface tension on the decay rate for the viscous surface waves problem.

Correspondingly, for the case $B\neq0$, namely, the free boundary problem for the viscous MHD equations, there are only a few results. The local-well posedness for the viscous MHD equations in a bounded variable domain with surface tension was proved by Padula et.al. in \cite{Padula}, and the small initial data global solvability for the same model was obtained by Solonnikov et.al. in \cite{Solonnikov}. In \cite{Lee}, Lee used the method developed by Masmoudi \cite {Masmoudi} to derive the vanishing viscosity limit with surface tension under the initial magnetic field is zero on the free boundary and in vacuum. Recently, for the model \eqref{1.2}, Wang and Xin \cite{Wanggg} studied the 2D case with $\mu=0$ and $\sigma>0$, and they proved the global solution decays to the equilibrium at an almost exponentially decay rate, in which they use the structure of the equations sufficiently to find a damping structure for the fluid vorticity which plays an important role to close the energies estimates.

Motivated by these articles mentioned above, in this paper, we focus on the free boundary problem for the incompressible viscous and resistive MHD equations both the case with and without surface tension, in which we mainly discuss the effect of surface tension on the decay rate of system \eqref{1.2}.

In this paper, for the case without surface tension, we mainly use the method mentioned in \cite{Tice} to overcome the lack of regularity for $\eta$. However, we have not use the structure ${\rm div}_\ma u=0$ to write $\partial_3u_3=-(\partial_2u_1+\partial_2u_2)+G^2$  to improve the full dissipation estimates of $u$, where $G^2$ are some quadratic nonlinearities. Here, we use a much more simple method used in \cite{Tan} to obtain the full dissipation estimates for $u$ and $p$, in which they had a crucial observation that they can get higher regularity estimates of $u$ on the boundary $\Sigma$ only from the horizontal dissipation estimates.

\subsection{Some definitions and notations}
Now, we state some definitions and notations that will be used throughout this paper. The Einstein convention of
summing over repeated indices for vector and tensor operations. In this paper, $C>0$ will denote a generic constant that can depend on $N$ and $\Omega$, but does not depend on the initial data and time. We refer to such constants as ``universal'', which are allowed to change from line to line. We use the notation $A\lesssim B$ to mean that $A\leq CB$ where $C>0$ is a universal constant. We will use $\mathbb{N}^{1+m}=\{\alpha=(\alpha_0,\alpha_1,\cdots,\alpha_m)\}$ to emphasize that the 0-index term is related to temporal derivatives. For $\alpha\in\mathbb{N}^{1+m}$ we write $\partial^\alpha=\partial^{\alpha_0}_t\partial^{\alpha_1}_1\cdots\partial^{\alpha_m}_m$. For just spatial derivatives we write $\mathbb{N}^{m}$, namely $\alpha_0=0$. We define the parabolic counting of such multi-indices by writing $|\alpha|=2\alpha_0+\alpha_1+\cdots+\alpha_m$. We will write $Df$ for the horizontal gradient of $f$, that is, $Df=\partial_1fe_1+\partial_2fe_2$, while $\nabla f$ will denote the usual full gradient.

We write $H^k(\Omega)$ with $k\geq0$ and $H^s(\Sigma)$ with $s\in\mathbb{R}$ for the usual Sobolev spaces, and we will denote $H^0=L^2$. In this paper, for simplicity, we will avoid writing $H^k(\Omega)$ or $H^s(\Sigma)$ and write only $\|\cdot\|_k$. When we write $\|\partial^j_tu\|_k$, it means that the space is $H^k(\Omega)$ and when we write $\|\partial^j_t\eta\|_k$, it will means that the space is $H^k(\Sigma)$.

For a given norm $\|\cdot\|$ and integers $k,m\geq0$, we introduce the following notation for sums of spatial derivatives:
\begin{equation}
\| D^{k}_m f\|^{2} := \sum_{\alpha\in \mathbb{N}^{2}, m\leq|\alpha|\leq k} \|\partial^{\alpha} f\|^{2} ~~~
{\rm and}~~~\| \nabla^{k}_m f\|^{2} := \sum_{\alpha\in \mathbb{N}^{3}, m\leq|\alpha|\leq k} \|\partial^{\alpha} f\|^{2}
\end{equation}
The convention we adopt in this notation is that $D$ refers to only horizontal spatial derivatives, while $\nabla$ refers to full spatial derivatives. For space-time derivatives we add bars to our notation:
\begin{equation}
\| \bar{D}^{k}_m f\|^{2} := \sum_{\alpha\in \mathbb{N}^{1+2}, m\leq|\alpha|\leq k} \|\partial^{\alpha} f\|^{2} ~~~
{\rm and}~~~\| \bar{\nabla}^{k}_m f\|^{2} := \sum_{\alpha\in \mathbb{N}^{1+3}, m\leq |\alpha|\leq k} \|\partial^{\alpha} f\|^{2}
\end{equation}
When $k=m\geq0$, we denote
 \begin{equation}
 \begin{aligned}
\| D^{k} f\|^{2}=\| D^{k}_k f\|^{2},~~\| \nabla^{k} f\|^{2}=\| \nabla^{k}_k f\|^{2},\\
\| \bar{D}^{k} f\|^{2}=\| \bar{D}^{k}_k f\|^{2},~~\| \bar{\nabla}^{k} f\|^{2}=\| \bar{\nabla}^{k}_k f\|^{2}.
\end{aligned}
\end{equation}
The rest of this paper unfolds as follows. In section 2, we first define the energies and dissipations, and then state our main results. In section 3 we prove some preliminary lemmas that we will use in our a priori estimates. In section 4, we complete the a priori estimates for the case $\sigma>0$. In section 5, we closed the a priori estimates for the case $\sigma=0$.

\section{Main Results}

We first state the result for \eqref{1.14} in the case $\sigma>0$. Firstly, we define some energy functions in this case. We define the energy as
\begin{equation}\label{12.1}
\begin{aligned}
\mathcal{E} :=&\|u\|^{2}_{2} + \|\partial_{t} u\|^{2}_{0}+ \|b\|^{2}_{2}+ \|\partial_{t}b\|_{0}
+ \|p\|^{2}_{1}+ \|\eta\|^{2}_{3}~~\\
&+ \|\partial_{t} \eta\|^{2}_{{3}/{2}}+ \|\partial_{t}^{2} \eta\|^{2}_{-{1}/{2}},
\end{aligned}
\end{equation}
and define the dissipation as
\begin{equation}\label{12.2}
\begin{aligned}
\mathcal{D} :=&\|u\|^{2}_{3} + \|\partial_{t} u\|^{2}_{1}
 + \|b\|^{2}_{3}+ \|\partial_{t} b \|_{1}+ \|p\|^{2}_{2}+ \|\eta\|^{2}_{{7}/{2}}~~\\
&+ \|\partial_{t} \eta\|^{2}_{{5}/{2}}+ \|\partial_{t}^{2}\eta\|^{2}_{1/2}.
\end{aligned}
\end{equation}
In the case $\sigma>0$, the global well-posedness result is stated as follows.
\begin{theo}\label{thm:0102}
For $\sigma>0$, we assume that the initial datum $u_{0}\in H^{2}(\Omega)$, $\eta_{0}\in H^{3}(\Sigma)$, $b_{0}\in H^{2}(\Omega)$ and satisfy some appropriate compatibility conditions as well as the zero-average condition \eqref{1.3}. Then there exists a universal constant $\kappa>0$ such that, if
\begin{equation*}
\|u_{0}\|^{2}_{2}+ \|\eta_{0}\|^{2}_{3}+ \|b_{0}\|^2_{2}\leq \kappa,
\end{equation*}
then, for all $t\geq0$, there exists a unique strong solution $(u,p,\eta,b)$ to \eqref{1.14} satisfying the estimate
\begin{equation} \label{2.7}
e^{\lambda t} \mathcal{E}(t) + \int_{0}^{t} \mathcal{D}(s)ds \lesssim \mathcal{E}(0).
\end{equation}

\end{theo}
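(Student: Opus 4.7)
The plan is the standard a priori-estimates-plus-continuation scheme for small-data free-boundary problems: I would assume a local strong solution for \eqref{1.14} (built by adapting Beale's method or the Guo--Tice scheme to include the magnetic field $b$) and then extend it globally by establishing two closed inequalities, namely a nonlinear energy-dissipation estimate of the form
\begin{equation}
\frac{d}{dt}\mathcal{E}(t) + \mathcal{D}(t) \leq C\sqrt{\mathcal{E}(t)}\,\mathcal{D}(t),
\end{equation}
and a coercivity/interpolation bound $\mathcal{E} \leq C_0 \mathcal{D}$ coming from surface tension. Smallness of the initial energy, a continuity argument, and Gr\"onwall then yield $\mathcal{E}(t)\lesssim \mathcal{E}(0)e^{-\lambda t}$ together with the dissipation integral, which is exactly \eqref{2.7}.

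To produce the nonlinear differential inequality I would proceed in three layers. The first layer is the basic $L^2$ energy identity obtained by testing the momentum equation against $u$, the induction equation against $b$, and using the kinematic boundary condition to couple these with the $\eta$ equation; thanks to \eqref{1.13}, the divergence-free conditions ${\rm div}_{\mathcal{A}}u = {\rm div}_{\mathcal{A}}b = 0$, and integration by parts on $\Sigma$ against $(pI-\mathbb{D}_{\mathcal{A}}u)\mathcal{N}=\eta\mathcal{N}+\sigma M\mathcal{N}$, the convective, pressure and magnetic-coupling terms cancel to leading order, the surface tension contribution produces a positive boundary dissipation of the form $\sigma \int_\Sigma |D^2\eta|^2/(\cdots)$, and the gravity term produces $\|\eta\|_0^2$ in the energy. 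The second layer is the time-differentiated analogue: apply $\partial_t$ (and $\partial_t^2$ where needed) to \eqref{1.14} and test by $\partial_t u$, $\partial_t b$ and the appropriate boundary traces, producing the $\partial_t u, \partial_t b, \partial_t \eta, \partial_t^2 \eta$ parts of $\mathcal{E}$ and $\mathcal{D}$. The third layer recovers the pure spatial regularity $\|u\|_3^2, \|p\|_2^2, \|b\|_3^2, \|\eta\|_{7/2}^2$ by reading \eqref{1.14} as a Stokes-type elliptic system for $(u,p)$ and a Poisson-type system for $b$, with inhomogeneous data in $H^{-1}$ plus boundary values on $\Sigma$ controlled by $\eta$; the $\|\eta\|_{7/2}$ control comes precisely from the surface-tension term $\sigma M\mathcal{N}$ in the dynamic boundary condition, since $M$ is a second-order quasilinear operator that gains one derivative on $\eta$ compared with the gravity-only case. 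All commutators between derivatives and the $\mathcal{A}$-dependent operators are organized so that each extra factor of $\bar\eta$ or $\partial\bar\eta$ is estimated by $\sqrt{\mathcal{E}}$ and the remaining factors by $\mathcal{D}$, which is what gives the cubic bound $\sqrt{\mathcal{E}}\mathcal{D}$.

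The coercivity step $\mathcal{E} \lesssim \mathcal{D}$ is straightforward once \eqref{12.1}--\eqref{12.2} are compared term by term: each component of $\mathcal{E}$ has exactly one fewer derivative than its counterpart in $\mathcal{D}$. For the interior velocity/magnetic/pressure terms I would use Poincar\'e (via $u|_{\Sigma_{-1}}=0$, $b|_{\Sigma_{-1}}=0$ and the pressure's mean value being controlled); for $\eta$, $\partial_t\eta$ and $\partial_t^2\eta$ I would use the zero-average condition \eqref{1.3} and trace/interpolation on $\Sigma$, and crucially the $\|\eta\|_3^2$ term is absorbed by $\|\eta\|_{7/2}^2$, which exists in $\mathcal{D}$ only because $\sigma>0$. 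Combining $\mathcal{E}\leq C_0 \mathcal{D}$ with the differential inequality and the smallness assumption $\mathcal{E}(0)\leq \kappa \ll 1$, a bootstrap keeps $\sqrt{\mathcal{E}(t)}\leq (2C)^{-1}$ for all $t\geq 0$, giving $\tfrac{d}{dt}\mathcal{E}+\tfrac{1}{2}\mathcal{D}\leq 0$ and hence $\tfrac{d}{dt}\mathcal{E}+\tfrac{1}{2C_0}\mathcal{E}\leq 0$; this yields exponential decay with $\lambda=1/(2C_0)$, and time integration of $\mathcal{D}$ gives the integral term in \eqref{2.7}.

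The main obstacle is the third layer of the a priori estimate, namely the full spatial recovery of $\|u\|_3, \|p\|_2, \|\eta\|_{7/2}$ (and the analogues for $b$). Two points require care. First, the Stokes/pressure elliptic estimate must be carried out with $\mathcal{A}$-dependent coefficients and with the surface-tension boundary condition, so that the expected regularity gain on $\eta$ coming from $M$ is actually transferred to $u$ and $p$; this forces one to track how $\bar\eta$ and its derivatives propagate through the commutators, using harmonic-extension estimates for $\bar\eta=\mathcal{P}\eta$. Second, the magnetic coupling $(b+\bar B)\cdot\nabla_{\mathcal{A}}b$ in the $u$-equation and $(b+\bar B)\cdot\nabla_{\mathcal{A}}u$ in the $b$-equation must be handled simultaneously: the constant-field part $\bar B\cdot\nabla_{\mathcal{A}}$ is linear and non-dissipative, so its contributions in the two equations must be shown to cancel when one sums the energy identities, while the $b\cdot\nabla_{\mathcal{A}}$ part is genuinely nonlinear and must be placed entirely on the $\sqrt{\mathcal{E}}\mathcal{D}$ side using the divergence-free constraints; everything else (transport terms, $\partial_t\bar\eta \tilde{b}K\partial_3$ drift, curvature remainders) is routine but voluminous bookkeeping.
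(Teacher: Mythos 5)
Your overall scheme (local solution plus continuation, a nonlinear inequality $\tfrac{d}{dt}\mathcal{E}+\mathcal{D}\lesssim\sqrt{\mathcal{E}}\,\mathcal{D}$, the comparison $\mathcal{E}\lesssim\mathcal{D}$ made possible by the $\sigma>0$ term $\|\eta\|_{7/2}^2$ in the dissipation, and Gr\"onwall) is the same as the paper's, but there is a genuine gap in how you propose to recover the purely spatial regularity. Your energy layers consist only of the basic $L^2$ identity and its time-differentiated analogue, and you then want to obtain $\|u\|_3$, $\|p\|_2$, $\|\eta\|_{7/2}$ (and, on the energy side, $\|u\|_2$, $\|p\|_1$, $\|\eta\|_3$) purely from elliptic theory applied to the Stokes system with the stress boundary condition. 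As described this is circular: the Stokes estimate with boundary condition $(pI-\mathbb{D}u)e_3=(\eta-\sigma\Delta_\star\eta)e_3+G^4$ needs $\|\eta-\sigma\Delta_\star\eta\|_{H^{3/2}(\Sigma)}\sim\|\eta\|_{7/2}$ as input, while the only way to extract $\|\eta\|_{7/2}$ from the dynamic boundary condition is to already control $p$ and $\partial_3u_3$ in $H^{3/2}(\Sigma)$, i.e.\ $\|p\|_2$ and $\|u\|_3$; the temporal dissipation $\|\partial_t u\|_1$ alone does not break this loop, and the same circularity blocks $\|\eta\|_3$ in the energy, since it would require $u\in H^{5/2}$, $p\in H^{3/2}$, which in turn require $\|\eta\|_{5/2}$. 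The paper breaks the circle by including \emph{horizontal spatial} derivatives in the energy identities: it applies $\partial^\alpha$ for all $\alpha\in\mathbb{N}^{1+2}$ with $|\alpha|\le 2$ (not only $\partial_t$) to the perturbed linear form \eqref{3.13}, so that $\bar{\mathcal{D}}$ controls $\|D^2u\|_1$ and hence, by trace, $\|u|_\Sigma\|_{5/2}$; then the Dirichlet--Stokes estimate of Lemma \ref{A.3} (which requires no $\eta$ input) gives $\|u\|_3+\|\nabla p\|_1$, the dynamic boundary condition read as $(1-\sigma\Delta_\star)\partial^\alpha\eta=\partial^\alpha p-\partial_3\partial^\alpha u_3-\partial^\alpha G^4_3$ then yields $\|D\eta\|_{5/2}$ and, with the zero-average Poincar\'e inequality, $\|\eta\|_{7/2}$, and only afterwards does the stress-condition Stokes estimate give $\|p\|_2$; on the energy side $\|\eta\|_3$ comes directly from the terms $\sigma|D\partial^\alpha\eta|^2$ in $\bar{\mathcal{E}}$ for the horizontal multi-indices. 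This horizontal-derivative step (the device the authors credit to Tan--Wang, namely getting higher regularity of $u$ on $\Sigma$ from horizontal dissipation alone) is exactly what your plan omits, and without it the third layer does not close.

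Two smaller inaccuracies are worth flagging. First, the basic identity does not produce a boundary dissipation $\sigma\int_\Sigma|D^2\eta|^2$: the surface tension contributes $\sigma\int_\Sigma|D\partial^\alpha\eta|^2$ to the \emph{energy} (see Lemma \ref{lemm3.1}), and all dissipation-level control of $\eta$ is recovered a posteriori through the boundary conditions as above. Second, in the time-differentiated estimate the term $\int_\Omega \partial_t p\, F^2 J$ cannot be organized into $\sqrt{\mathcal{E}}\,\mathcal{D}$, because neither $\mathcal{E}$ nor $\mathcal{D}$ contains $\partial_t p$; the paper integrates by parts in time and carries the correction functional $\mathcal{F}=\int_\Omega pF^2J$ alongside $\bar{\mathcal{E}}$, absorbing it at the end via $|\mathcal{F}|\lesssim\mathcal{E}^{3/2}$. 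Your outline asserts the clean differential inequality without this adjustment, so as stated that step would fail.
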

\begin{rema}
Since $\eta$ is such that the mapping $\Phi(\cdot,t)$, defined by \eqref{1.4}, is a diffeomorphism for each $t\geq0$, one may change coordinate to $y\in\Omega(t)$ to produce a global-in-time decaying solution to \eqref{1.2}.

\end{rema}

\begin{rema}
Theorem \ref{thm:0102} implies that $\mathcal E(t)\lesssim e^{-\lambda t}$, which means that for $\sigma>0$ the solution returns to the stable state at an exponential decay rate.
\end{rema}
We then state our results for \eqref{1.14} in the case $\sigma=0$. And we first define some energy functionals corresponding to this case. For a generic integer $n\geq3$, we define the energy as
\begin{equation}\label{2.1}
\begin{aligned}
\mathcal{E}_{n} :=\sum_{j=0}^{n}\left ( \l\partial_{t}^{j} u\r^{2}_{2n-2j} + \l\partial_{t}^{j} b \r^{2}_{2n-2j}+ \l\partial_{t}^{j} \eta\r^{2}_{2n-2j} \right)+ \sum_{j=0}^{n-1}\l\partial_{t}^{j}p\r^{2}_{2n-2j-1},
\end{aligned}
\end{equation}
and define the corresponding dissipation as
\begin{equation}\label{2.2}
\begin{aligned}
\mathcal{D}_{n} &:=\sum_{j=0}^{n} \left( \l\partial_{t}^{j} u\r^{2}_{2n-2j+1} + \l\partial_{t}^{j}b\r^{2}_{2n-2j+1}\right)+ \sum_{j=0}^{n-1}\l\partial_{t}^{j}p\r^{2}_{2n-2j}\\
&+ \l\eta\r_{2n-1/2}^{2}+ \l\partial_{t} \eta\r^{2}_{2n-1/2} + \sum_{j=2}^{n+1} \l\partial_{t}^{j} \eta\r^{2}_{2n-2j+5/2}.
\end{aligned}
\end{equation}
We write the high-order spatial derivatives of $\eta$ as
\begin{equation} \label{2.3}
\mathcal{F}_{2N} := \|\eta\|^{2}_{4N+1/2}.
\end{equation}

Finally, we define the total energy as
\begin{equation}\label{2.5}
\begin{aligned}
\mathcal{G}_{2N}(t) := \sup_{0\leq r \leq t} \mathcal{E}_{2N}(r)+ \int_{0}^{t} \mathcal{D}_{2N}(r) dr+ \sup_{0\leq r\leq t }(1+r)^{4N-8} \mathcal{E}_{N+2}(r) + \sup_{0\leq r\leq t} \frac{\mathcal{F}_{2N}(r)}{(1+r)}.
\end{aligned}
\end{equation}

Our main results state as follows.

\begin{theo}\label{theo2.1}
 For $\sigma=0$, we assume that the initial data $u_{0}\in H^{4N}(\Omega)$, $b_{0}\in H^{4N}(\Omega)$ and $\eta_{0}\in H^{4N+1/2}(\Sigma)$ satisfy some appropriate compatibility conditions as well as the zero-average condition \eqref{1.3}, where $N\geq 3$. There exists a constant $\varepsilon_0>0$ such that if
\begin{equation*}
\mathcal{E}_{2N}(0)+\mathcal{F}_{2N}(0)\leq \varepsilon_0,
\end{equation*}
then, for all $t\geq0$, there exists a global unique solution $(u,p,b,\eta)$ to \eqref{1.14} satisfying the estimate
\begin{equation}\label{2.8}
\mathcal{G}_{2N}(t)\lesssim \mathcal{E}_{2N}(0)+\mathcal{F}_{2N}(0).
\end{equation}

\end{theo}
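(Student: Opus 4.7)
The plan is to establish Theorem~\ref{theo2.1} via a continuity/bootstrap argument built on the two-tier energy method of Guo--Tice from \cite{Guooo}, adapted here to the MHD system \eqref{1.14}. Local well-posedness in the spaces implicit in $\mathcal{E}_{2N}$ follows from a standard iteration argument, so all of the work is in closing the a priori estimate \eqref{2.8}. Thus, assuming a smooth solution on $[0,T]$ satisfying the bootstrap hypothesis $\mathcal{G}_{2N}(T)\leq\delta$ for a small universal $\delta>0$, I aim to improve it to $\mathcal{G}_{2N}(T)\lesssim \mathcal{E}_{2N}(0)+\mathcal{F}_{2N}(0)$.

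First I would derive energy-dissipation estimates at two regularity levels. At the high level, applying $\partial^\alpha$ with $|\alpha|\leq 2N$ in the parabolic counting to \eqref{1.14}, testing against $J$ times the appropriate variable, and invoking the geometric identity \eqref{1.13} should yield a differential inequality of the form
\[
\tfrac{d}{dt}\mathcal{E}_{2N} + \mathcal{D}_{2N} \lesssim \sqrt{\mathcal{E}_{2N}}\,\mathcal{D}_{2N} + \sqrt{\mathcal{E}_{2N}\mathcal{F}_{2N}}\,\mathcal{D}_{2N},
\]
whose right side is absorbed by the left under the smallness $\mathcal{G}_{2N}\leq\delta$. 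The dissipation $\mathcal{D}_{2N}$ controls every quantity in $\mathcal{E}_{2N}$ except $\|\eta\|_{2N+4}$, which has a half-derivative gap; the missing regularity for $u$, $p$ and $b$ is recovered as in \cite{Tan} by viewing \eqref{1.14} on each time slice as a linear Stokes-type elliptic system with boundary data $(pI-\mathbb{D}_\mathcal{A}u)\mathcal{N}=\eta\mathcal{N}$, which upgrades horizontal dissipation estimates to full normal regularity, and by an analogous elliptic argument for $b$. A parallel but shorter calculation at the $(N+2)$-level yields
\[
\tfrac{d}{dt}\mathcal{E}_{N+2} + \mathcal{D}_{N+2} \lesssim \sqrt{\mathcal{G}_{2N}}\,\mathcal{D}_{N+2},
\]
so that under smallness $\tfrac{d}{dt}\mathcal{E}_{N+2}+\tfrac12\mathcal{D}_{N+2}\leq 0$.

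The almost-exponential rate $(1+t)^{-(4N-8)}$ is then extracted by interpolating through the half-derivative gap. Since $\mathcal{E}_{N+2}$ contains $\|\eta\|_{2N+4}^2$ whereas $\mathcal{D}_{N+2}$ only contains $\|\eta\|_{2N+7/2}^2$, and $\mathcal{F}_{2N}=\|\eta\|_{4N+1/2}^2$, the Sobolev interpolation
\[
\|\eta\|_{2N+4}^2 \lesssim \|\eta\|_{2N+7/2}^{2(1-\theta)}\,\|\eta\|_{4N+1/2}^{2\theta},\qquad \theta=\tfrac{1}{4N-6},
\]
together with the bound $\mathcal{F}_{2N}(t)\lesssim (1+t)\mathcal{G}_{2N}(t)$, produces a relation $\mathcal{E}_{N+2}^{1+\gamma}\lesssim \mathcal{D}_{N+2}\,[(1+t)\mathcal{G}_{2N}]^{\gamma}$ for some $\gamma>0$ depending on $N$. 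Feeding this into the $(N+2)$-level differential inequality yields an ODE $\dot y + c\, y^{1+\gamma}/(1+t)^{\gamma}\leq 0$ with $y=\mathcal{E}_{N+2}$, which integrates to the claimed polynomial decay with exponent $4N-8$ provided $N\geq 3$. The linear-in-$t$ growth of $\mathcal{F}_{2N}$ itself would be obtained by applying horizontal derivatives to the kinematic equation $\partial_t\eta=u\cdot\mathcal{N}$, pairing with $\eta$, and using the $2N$-level control on $u$.

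The main obstacle I expect is the $2N$-level energy estimate in the presence of the magnetic coupling. The nonlinear terms $\bar B\cdot\nabla_{\mathcal{A}}b$ and $\bar B\cdot\nabla_{\mathcal{A}}u$ in \eqref{1.14} couple the velocity and magnetic equations at top order, and, unlike the $\sigma>0$ case, there is no surface-tension-induced dissipation of $\eta$ available to absorb the top-order commutators of $\mathcal{A}$ produced by these terms. Those commutators must instead be traced into $\mathcal{F}_{2N}$ rather than $\mathcal{D}_{2N}$, which is exactly why the quantity $\mathcal{F}_{2N}/(1+t)$ appears in $\mathcal{G}_{2N}$ and why its linear-in-$t$ growth is unavoidable in the absence of surface tension. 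A secondary delicate point is the estimate of the high time-derivatives $\partial_t^{n+1}\eta$ in the fractional trace spaces appearing in $\mathcal{D}_{2N}$, which requires carefully distributing time derivatives through the kinematic equation before invoking the trace theorem. Once both steps are executed and combined with the decay ODE above, all four quantities comprising $\mathcal{G}_{2N}$ are bounded by $\mathcal{E}_{2N}(0)+\mathcal{F}_{2N}(0)$, closing the bootstrap and yielding \eqref{2.8}.
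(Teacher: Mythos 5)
Your overall architecture is the paper's: local theory plus a bootstrap on $\mathcal{G}_{2N}(T)\leq\delta$, two-tier energy--dissipation estimates at the $2N$ and $N+2$ levels, recovery of full regularity for $(u,p,b)$ via the Dirichlet--Stokes trick of \cite{Tan} using boundary regularity of $u$ gained from horizontal dissipation, a transport estimate giving $\sup_{r\le t}\mathcal{F}_{2N}(r)\lesssim\mathcal{F}_{2N}(0)+t\int_0^t\mathcal{D}_{2N}$ (Proposition \ref{prop6.1}), and an interpolation-driven ODE for the decay of $\mathcal{E}_{N+2}$. Your decay step is a legitimate variant: you interpolate $\|\eta\|_{2N+4}$ between $\mathcal{D}_{N+2}$ and $\mathcal{F}_{2N}$, arriving at $\dot y+c\,y^{1+\gamma}(1+t)^{-\gamma}\leq0$ with $\gamma=\tfrac{1}{4N-7}$, which indeed integrates to $(1+t)^{-(4N-8)}$; the paper instead interpolates between $\mathcal{D}_{N+2}$ and the \emph{bounded} quantity $\mathcal{E}_{2N}$ with $\theta=\tfrac{4N-8}{4N-7}$ (Proposition \ref{prop6.3}), obtaining an autonomous ODE. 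Both give the same exponent, and your route is fine provided the boundedness of $\mathcal{G}_{2N}$ used in it has already been secured.

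That is exactly where your proposal has a genuine gap: the high-tier estimate. You claim $\tfrac{d}{dt}\mathcal{E}_{2N}+\mathcal{D}_{2N}\lesssim\sqrt{\mathcal{E}_{2N}}\,\mathcal{D}_{2N}+\sqrt{\mathcal{E}_{2N}\mathcal{F}_{2N}}\,\mathcal{D}_{2N}$ and that the right side ``is absorbed by the left under the smallness $\mathcal{G}_{2N}\leq\delta$.'' But under the bootstrap the only control on $\mathcal{F}_{2N}$ is $\mathcal{F}_{2N}(t)\leq\delta(1+t)$, so your error coefficient is of size $\delta\sqrt{1+t}$, which is not uniformly small; for $t\gtrsim\delta^{-2}$ the prefactor of $\mathcal{D}_{2N}$ becomes negative and the estimate cannot close. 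The essential mechanism of the two-tier scheme, which your displayed inequality loses, is that the $\mathcal{F}_{2N}$-bearing errors at the $2N$ level arise in the form $\sqrt{\mathcal{D}_{2N}\,\mathcal{F}_{2N}\,\mathcal{E}_{N+2}}$ (see Lemma \ref{lemm5.6}), i.e.\ paired with the \emph{decaying} low-tier energy and only a square root of the dissipation; they are then handled not pointwise in time but after time integration, where the decay $\mathcal{E}_{N+2}\lesssim\delta(1+r)^{-(4N-8)}$ beats the linear growth of $\mathcal{F}_{2N}$, yielding $\int_0^t\sqrt{\mathcal{D}_{2N}\mathcal{E}_{N+2}\mathcal{F}_{2N}}\lesssim\mathcal{F}_{2N}(0)+\sqrt{\delta}\int_0^t\mathcal{D}_{2N}$ (Corollary \ref{coro6.1}). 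Without this structure (or an equivalent substitute), the boundedness of $\sup_t\mathcal{E}_{2N}+\int_0^t\mathcal{D}_{2N}$ is not established, and then neither your interpolation ODE nor the control of $\mathcal{F}_{2N}/(1+t)$ can be run, so the bootstrap does not close. A secondary omission of the same flavor: the term $\int_\Omega\partial_t^{2N}p\,F^2J$ (one more time derivative of $p$ than $\mathcal{D}_{2N}$ controls) must be integrated by parts in time and the resulting correction absorbed into the energy (as in Lemma \ref{lemm5.4} and Lemma \ref{lemm6.1}); your proposal does not address it.
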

\begin{rema}
Theorem \ref{theo2.1} implies that $\mathcal E_{N+2}(t)\lesssim (1+t)^{-{4N-8}}$, which is integrable in time for $N\geq 3$. Since $N$ may be taken to be arbitrarily large, this decay results can be regarded as an ``almost exponential'' decay rate. Comparing the two different cases for $\sigma$, reveals that the surface tension plays a important role for the decay rate.
\end{rema}

\begin{rema}
We refe to \cite{Bealee,Guo} for the local well-posedness of the system \eqref{1.14} for both the case $\sigma>0$ and $\sigma=0$, respectively. Then, by a continuity argument, to prove Theorem \ref{thm:0102} and Theorem \ref{theo2.1} it suffices to derive the a priori estimates, namely, Theorem 4.6 and 5.12.
\end{rema}
\section{Preliminaries for a priori estimates}
In this section, we will present some preliminary results and given the proofs respectively. We state two forms of equations to \eqref{1.14} and describe the corresponding energy evolution structure.
\subsection{Geometric Form}
We now give a linear formation of the problem \eqref{1.14} in its geometric form. Assume that $u,~\eta,~b$ are known and that $\ma,~\mathcal{N},~J,$ etc., are given in terms of $\eta$ as usual. We then consider the linear equation for $(v,~H,~q,~h)$ given by
\begin{equation}\label{3.1}
\left\{
\begin{aligned}
&\partial_{t} v-\partial_{t}\bar{\eta}\tilde{b}K \partial_{3} v + u\cdot \nabla_{\mathcal{A}} v
+{\rm div}_\ma(qI-\mathbb{D}_{\mathcal{A}}v)=(b+\bar B)\cdot\nabla_{\ma}H+F^{1},~~{\rm in~}\Omega~ \\
&{\rm div}_{\mathcal{A}} v=F^{2},~~~~~~~~~~~~~~~~~~~~~~~~~~~~~~~~~~~~~~~~~~~~~~~~~~~~~~~~~~~~~~~~~~~~~~~~~~~~{\rm in}~\Omega~ \\
&\partial_tH-\partial_{t}\bar{\eta}\tilde{b}K \partial_{3} H + u\cdot \nabla_{\mathcal{A}} H -\Delta_{\mathcal{A}} H=(b+\bar B)\cdot\nabla_{\ma}v+F^{3},~~~~~~~~~~~~~~{\rm in}~\Omega~ \\
&(qI-\mathbb{D}_{\mathcal{A}} v)\mathcal{N}=(h - \sigma \Delta_{\star}h) \mathcal{N}+F^{4},~H=0,~~~~~~~~~~~~~~~~~~~~~~~~~~~~~~~~~~{\rm on}~\Sigma~\\
&\partial_{t} h-v\cdot \mathcal{N} =F^{5}, ~~~~~~~~~~~~~~~~~~~~~~~~~~~~~~~~~~~~~~~~~~~~~~~~~~~~~~~~~~~~~~~~~~~~~{\rm on}~\Sigma~\\
&v=0,~H=0,~~~~~~~~~~~~~~~~~~~~~~~~~~~~~~~~~~~~~~~~~~~~~~~~~~~~~~~~~~~~~~~~~~~~~~~~~~{\rm on}~\Sigma_{-1},~
\end{aligned}
\right.
\end{equation}
where $\Delta_\star=\partial_{x_1}^2+\partial_{x_2}^2$.
\begin{lemm}\label{lemm3.1}
Let u and $\eta$ be given and solve $\eqref{1.14}$. If $(v,H,q,h)$ solve $\eqref{3.1}$ then
\begin{equation}\label{3.2}
\begin{aligned}
&\frac{d}{dt}\left(\int_{\Omega} \frac{|v|^{2}}{2} J+ \int_{\Omega} \frac{|H|^{2}}{2} J+\int_{\Sigma} \frac{|h|^{2}}{2}
+\sigma\int_{\Sigma} \frac{|Dh|^{2}}{2}\right)+ \int_{\Omega} \frac{|\mathbb{D}_{\mathcal{A}} v|^{2}}{2} J +\int_{\Omega}|\nabla_\ma H|^2J\\
&~~~~~~~= \int_{\Omega} (v\cdot F^{1}+ q F^{2}+v\cdot F^3)J
- \int_{\Sigma} v\cdot F^{4} +\int_{\Sigma}( h-\sigma\Delta_{\star}h) F^{5} ,
\end{aligned}
\end{equation}
\end{lemm}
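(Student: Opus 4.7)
The plan is to derive the identity \eqref{3.2} by testing the momentum equation for $v$ and the induction equation for $H$ against $vJ$ and $HJ$ respectively, and then carefully integrating by parts using the geometric identity \eqref{1.13}, the divergence-free conditions $\text{div}_{\ma}u=0$ and $\text{div}_{\ma}b=0$ from \eqref{1.14}, and the boundary conditions. The key structural identities I rely on are $JK=1$, $J\ma e_3|_{\Sigma}=\mathcal{N}$, and $\partial_t J=\partial_3(\partial_t\bar\eta\tilde b)$, all of which follow from the definitions \eqref{1.10}--\eqref{1.11}.

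First I would handle the material-derivative terms. Writing $\int_{\Omega}\partial_t v\cdot v J=\frac{d}{dt}\int_{\Omega}\frac{|v|^2}{2}J-\int_{\Omega}\frac{|v|^2}{2}\partial_t J$, and integrating the term $-\partial_t\bar\eta\tilde b K\partial_3 v\cdot v J=-\frac{1}{2}\partial_t\bar\eta\tilde b\,\partial_3|v|^2$ by parts in $x_3$, one produces $+\frac{1}{2}\int_{\Omega}|v|^2\partial_t J$ in the interior (using $\partial_t J=\partial_3(\partial_t\bar\eta\tilde b)$) and a boundary contribution $-\frac{1}{2}\int_{\Sigma}\partial_t\eta|v|^2$ on $\Sigma$ (since $\tilde b|_{\Sigma}=1$ and $\bar\eta|_{\Sigma}=\eta$). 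For the convective term $\int_{\Omega}u\cdot\nabla_{\ma}v\cdot vJ=\frac{1}{2}\int_{\Omega}u_j J\ma_{jk}\partial_k|v|^2$, integrating by parts and using \eqref{1.13} together with $\text{div}_{\ma}u=0$ leaves only the boundary term $\frac{1}{2}\int_{\Sigma}u\cdot\mathcal{N}|v|^2=\frac{1}{2}\int_{\Sigma}\partial_t\eta|v|^2$, which exactly cancels the previous boundary term. The same identity is used for the $H$ equation.

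Next I would integrate the stress term $\int_{\Omega}\text{div}_{\ma}(qI-\mathbb{D}_{\ma}v)\cdot vJ$ by parts: using \eqref{1.13} eliminates the derivatives falling on $J\ma$, and the interior integrand becomes $-Jq\,\text{div}_{\ma}v+\frac{J}{2}|\mathbb{D}_{\ma}v|^2=-JqF^2+\frac{J}{2}|\mathbb{D}_{\ma}v|^2$ after symmetrizing. The boundary contribution $\int_{\Sigma}(qI-\mathbb{D}_{\ma}v)\mathcal{N}\cdot v$ is then rewritten using the stress condition in \eqref{3.1}, giving $\int_{\Sigma}(h-\sigma\Delta_{\star}h)(v\cdot\mathcal{N})+\int_{\Sigma}F^4\cdot v$. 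Inserting the kinematic relation $v\cdot\mathcal{N}=\partial_t h-F^5$, integrating $-\sigma\Delta_{\star}h\cdot\partial_t h$ by parts in the horizontal variables on the periodic $\Sigma$, and recognizing perfect time derivatives, one picks up $\frac{d}{dt}\int_{\Sigma}\frac{h^2}{2}+\sigma\frac{d}{dt}\int_{\Sigma}\frac{|Dh|^2}{2}$ together with the remainder $-\int_{\Sigma}(h-\sigma\Delta_{\star}h)F^5$ whose sign flips upon moving to the right-hand side.

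Finally I would treat the magnetic contribution: $\int_{\Omega}\Delta_{\ma}H\cdot HJ=-\int_{\Omega}|\nabla_{\ma}H|^2J$ with no boundary term since $H=0$ on both $\Sigma$ and $\Sigma_{-1}$. The delicate cancellation is the Lorentz coupling sum
\[
\int_{\Omega}(b+\bar B)\cdot\nabla_{\ma}H\cdot vJ+\int_{\Omega}(b+\bar B)\cdot\nabla_{\ma}v\cdot HJ=0,
\]
which I would establish by writing the integrand as $(b_j+\bar B_j)J\ma_{jk}\partial_k(v_iH_i)$ and integrating by parts: the interior term vanishes because $\partial_k((b_j+\bar B_j)J\ma_{jk})=J\ma_{jk}\partial_k(b_j+\bar B_j)=J\,\text{div}_{\ma}b=0$ (the piece from $\bar B$ dies by \eqref{1.13} since $\bar B$ is constant, and the $b$ piece by the induction constraint in \eqref{1.14}), and the boundary term vanishes because $H|_{\Sigma}=H|_{\Sigma_{-1}}=0$. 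Summing all contributions yields \eqref{3.2}. I expect the main obstacle to be organizing the cancellation between the convective and domain-motion boundary terms and verifying the Lorentz cancellation; once those are in hand, the remaining manipulations are standard integration-by-parts bookkeeping.
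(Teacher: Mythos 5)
Your proposal is correct and follows essentially the same route as the paper: testing the $v$- and $H$-equations against $Jv$ and $JH$, using $\partial_tJ=\partial_3(\partial_t\bar\eta\,\tilde b)$ and $J\mathcal{A}_{j3}=\mathcal{N}_j$ on $\Sigma$ together with the kinematic condition to cancel the domain-motion and convective boundary terms, rewriting the stress boundary term via the dynamic condition and $v\cdot\mathcal{N}=\partial_t h-F^5$ to produce the surface energies, and killing the Lorentz coupling by the Piola identity \eqref{1.13}, ${\rm div}_{\mathcal{A}}b=0$ and $H|_{\partial\Omega}=0$. These are exactly the computations $I_1$--$I_4$ in the paper's proof, so no further comment is needed.
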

\begin{proof}
We take the inner product of the first equation in \eqref{3.1} with $Jv$ and the third equation with $JH$, then integrate over $\Omega$ to find that
\begin{equation*}
I_1 + I_2+I_3+I_4=I_5,
\end{equation*}
where
\begin{equation}
\begin{aligned}
I_1=&\int_{\Omega} (\partial_{t} v_{i} J v_{i} -\partial_{t}\bar{\eta} \tilde{b} \partial_{3} v_{i}v_{i}+ u_{j}\mathcal{A}_{jk}\partial_{k}v_{i} J v_{i}),\\
I_2=&\int_{\Omega} \ma_{ik}\partial_kq Jv_{i}-\int_{\Omega} \ma_{jk}\partial_k(\ma_{jl}\partial_l v_i+\ma_{il}\partial_l v_j) J v_{i},\\
I_3=&\int_{\Omega}(\partial_{t} H_{i} J H_{i} -\partial_{t}\bar{\eta} \tilde{b} \partial_{3} H_{i}H_{i}+ u_{j}\mathcal{A}_{jk}\partial_{k}H_{i} J H_{i})\\
&-\int_{\Omega} \ma_{jk}\partial_k(\ma_{jl}\partial_l H_i)J H_{i},\\
I_4=&\int_\Omega(\bar B_j+b_j)\ma_{jk}\partial_k H_i Jv_i+
\int_\Omega(\bar B_j+b_j)\ma_{jk}\partial_k v_i JH_i,\\
I_5=& \int_{\Omega}F_i^{1}J v_i
+ \int_{\Omega}F_i^{3}J H_i.\\
\end{aligned}
\end{equation}
Integrating by parts and using \eqref{1.13}, one has
\begin{equation}
\begin{aligned}
I_1=&\partial_t\int_{\Omega} \frac{|v|^{2} J}{2}-\int_\Omega\frac{|v|^2\partial_tJ}{2}
-\int_\Omega\partial_t\bar\eta\tilde b\partial_3\frac{|v|^2}{2}+\int_\Omega u_j\partial_k(J\ma_{jk}\frac{|v|^2}{2})\\
=&\partial_t\int_{\Omega} \frac{|v|^{2} J}{2}-\int_\Omega\frac{|v|^2\partial_tJ}{2}+\int_{\Omega}\frac{|v|^2}{2}(\partial_t\bar\eta+\tilde b\partial_t\partial_3\bar\eta)\\
&-\int_\Omega J\ma_{jk}\partial_ku_j\frac{|v|^2}{2}-\frac{1}{2}\int_\Sigma(\partial_t\eta|v|^2-u_jJ\ma_{jk}e_3\cdot e_k|v|^2)\\
=&\partial_t\int_{\Omega} \frac{|v|^{2} J}{2},
\end{aligned}
\end{equation}
where according to \eqref{1.11}, we know that $\partial_tJ=\partial_t\bar \eta+\tilde b\partial_t\partial_3\bar\eta$ and $J\ma_{jk}e_3\cdot e_k=\mathcal{N}_j$ on $\Sigma$, then use the condition $\partial\eta=u\cdot \mathcal{N}$. Similarly, an integration by parts reveals that
\begin{equation}
\begin{aligned}
I_2&= -\int_{\Omega} \mathcal{A}_{jk} (qI-\mathbb{D}_\ma v)_{ij} J \partial_{k} v_{i} + \int_{\Sigma} J \mathcal{A}_{j3} (qI-\mathbb{D}_\ma v) _{ij} v_{i}\\
& = \int_{\Omega}( -q \mathcal{A}_{ik} \partial_{k} v_{i} J + J\frac{|\mathbb{D}_{\mathcal{A}} v|^{2}}{2})+ \int_{\Sigma} (qI-\mathbb{D}_\ma v)_{ij} \mathcal{N}_{j} v_{i} \\
& = \int_{\Omega}( -q J F^{2} + J\frac{|\mathbb{D}_{\mathcal{A}} v|^{2}}{2})+ \int_{\Sigma}( h-\sigma\Delta_{\star}h) \mathcal{N} \cdot v + F^{4}\cdot v\\
&=\int_{\Omega}( -q J F^{2} + J\frac{|\mathbb{D}_{\mathcal{A}} v|^{2}}{2})+\int_{\Sigma}( h-\sigma\Delta_{\star}h)(\partial_t h-F^5)+ F^{4}\cdot v\\
&=\int_{\Omega} (-q J F^{2} + J\frac{|\mathbb{D}_{\mathcal{A}} v|^{2}}{2})+\partial_t\int_\Sigma(\frac{|h|^2}{2}+\sigma|Dh|^2)\\
&~~~~+\int_\Sigma v\cdot F^4-\int_\Sigma(h-\sigma\Delta_\star h)\cdot F^5.
\end{aligned}
\end{equation}
By using $H=0$ on $\partial\Omega$, ${\rm div}_{\ma} u=0$ and \eqref{1.13}, one has
\begin{equation}
\begin{aligned}
I_3=&\partial_t\int_{\Omega} \frac{|H|^{2} J}{2}-\int_\Omega\frac{|H|^2\partial_tJ}{2}
+\int_\Omega(\partial_t\bar\eta+\partial_3\partial_t\bar\eta\tilde b)\frac{|H|^2}{2}\\
&-\int_\Omega J\ma_{jk}\partial_k u_j
+\int_\Omega J|\nabla_\ma H|^2\\
=&\partial_t\int_{\Omega} \frac{|H|^{2} J}{2}+\int_\Omega J|\nabla_\ma H|^2,
\end{aligned}
\end{equation}
and, similarly, by using $H=0$ on $\partial\Omega$, ${\rm div}_{\ma} b=0$ and \eqref{1.13}, we deduce
\begin{equation}
I_4=\int_\Omega(\bar B_j+b_j)J\ma_{jk}\partial_k(H_iv_i)
=-\int_\Omega J\ma_{jk}\partial_k b_j H_iv_i=0.
\end{equation}
Then, \eqref{3.2} follows from the estimates of $I_1,~I_2,~I_3$ and $I_4$.
\end{proof}

\subsection{Perturbed Linear Form}
 In many parts of this paper we will apply the PDE in a different formulation, which looks like a perturbation of the
 linearized problem. The utility of this form of the equations lies in the fact that the linear operator have constant coefficients. The equations in this form are
\begin{equation}\label{3.13}
\left\{
\begin{aligned}
&\partial_{t} u + \nabla p-\Delta u=G^{1},~~~~~~~~~~~~~~~~~~~~~~~~~~~~~~~~~~{\rm in}~\Omega~ \\
&{\rm div} u=G^{2},~~~~~~~~~~~~~~~~~~~~~~~~~~~~~~~~~~~~~~~~~~~~~~~~{\rm in}~\Omega~ \\
&\partial_{t}b -\Delta b=G^{3},~~~~~~~~~~~~~~~~~~~~~~~~~~~~~~~~~~~~~~~~~~~{\rm in}~\Omega~ \\
&(pI-\mathbb{D} u)e_{3}=(\eta -\sigma \Delta_\star \eta)e_3+G^{4},~b=0,~~~~~~~~{\rm on}~\Sigma~\\
&\partial_{t} \eta- u_{3} =G^{5}, ~~~~~~~~~~~~~~~~~~~~~~~~~~~~~~~~~~~~~~~~~~~~{\rm on}~\Sigma\\
&u=0,~b=0,~~~~~~~~~~~~~~~~~~~~~~~~~~~~~~~~~~~~~~~~~~~~~~{\rm on}~\Sigma_{-1}.
\end{aligned}
\right.
\end{equation}
Here we have written the nonlinear terms $G^{i}$ for $i=1,...,5$ as follows. We write $G^{1,l}:= \Sigma ^5_{l=1}G^{1,l}$, for
\begin{equation}\label{3.11}
\begin{aligned}
G_{i}^{1,1}: = & (\delta_{ij}-\mathcal{A}_{ij})\partial_{j} p,~G_{i}^{1,2}: = \partial_{t} \bar{\eta} \tilde b K \partial_{3} u_{i},\\
G_{i}^{1,3}: =& -u_{j} \mathcal{A}_{jk}\partial_{k} u_{i}+(b_j+\bar B_j)\ma_{jk}\partial_kb_i, \\
G_{i}^{1,4}: =& [K^{2}(1+A^{2}+B^{2})-1]\partial_{33} u_{i}-2AK \partial_{13} u_{i} -2BK \partial_{23}u_{i} , \\
G_{i}^{1,5}: =& [-K^{3} (1+A^{2}+B^{2})\partial_{3}J +AK^{2} (\partial_{1} J+ \partial_{3} A)]\partial_{3} u_{i}\\
&+ [BK^{2}(\partial_{2} J+ \partial_{3} B)-K(\partial_1A+\partial_2B)]\partial_{3} u_{i},\\
\end{aligned}
\end{equation}
$G^2$ is the function
\begin{equation}
G^{2} := AK \partial_{3} u_{1} + BK \partial_{3} u_{2} + (1-K) \partial_{3} u_{3} ,
\end{equation}
and
$G^{3}=G^{3,1}+G^{3,2}+G^{3,3}+G^{3,4}$, for
\begin{equation}
\begin{aligned}
G_i^{3,1}:=&\partial_{t} \bar{\eta} \tilde b K \partial_{3} b_{i}\\
G_{i}^{3,2}: =& -u_{j} \ma_{jk}\partial_{k} b_i+(b_j+\bar B_j)\ma_{jk}\partial_ku_i, \\
G_{i}^{3,3}: =& [K^{2}(1+A^{2}+B^{2})-1]\partial_{33} b_i-2AK \partial_{13} b_{i} -2BK \partial_{23}b_i , \\
G_{i}^{3,4}: = &[-K^{3} (1+A^{2}+B^{2})\partial_{3}J +AK^{2} (\partial_{1} J+ \partial_{3} A)]\partial_{3} b_i\\
&+ [BK^{2}(\partial_{2} J+ \partial_{3} B)-K(\partial_1A+\partial_2B)]\partial_{3} u_1,
\end{aligned}
\end{equation}
\begin{eqnarray}
&&G^{4}: = \partial_{1} \eta \left(
\begin{aligned}
&~~~~p-\eta-2(\partial_{1} u_{1}-AK \partial_{3} u_{1}) \\
&-\partial_{2} u_{1} -\partial_{1} u_{2} + BK \partial_{3} u_{1} + AK \partial_{3} u_{2}  \\
&~~~-\partial_{1} u_{3} -K \partial_{3} u_{1} + AK\partial_{3} u_{3}
\end{aligned}
\right) ~\nonumber~\\
&&+ \partial_{2} \eta \left(
\begin{aligned}
&-\partial_{2} u_{1} -\partial_{1} u_{2} + BK \partial_{3} u_{1} + AK \partial_{3} u_{2} \\
&~~~~~~p-\eta-2(\partial_{2} u_{2}-BK \partial_{3} u_{2})   \\
&~~~~~~-\partial_{2} u_{3} -K \partial_{3} u_{2} + BK\partial_{3} u_{3}
\end{aligned}
\right) +
\left(
\begin{aligned}
&(K-1) \partial_{3} u_{1}+ AK \partial_{3} u_{3}  \\
&(K-1) \partial_{3} u_{2}+ BK \partial_{3} u_{3}    \\
&~~~~~~~2(K-1)\partial_{3} u_{3}
\end{aligned}
\right)~\nonumber~\\
&&+\sigma(H-\Delta_{\star}\eta)\mathcal{N}+ \sigma \Delta_{\star}(\mathcal{N}-e_{3}),
\end{eqnarray}

\begin{equation}\label{3.12}
G^5=-D\eta\cdot u.
\end{equation}

\begin{lemm}\label{lemm3.2}
Suppose $(v,H,q,h)$ solve
\begin{equation}\label{3.14}
\left\{
\begin{aligned}
&\partial_{t} v + \nabla q-\Delta v =\Phi^{1},~~~~~~~~~~~~~~~~~~~~~~~~~~~~~~~{\rm in}~\Omega~ \\
&{\rm div} v=\Phi^{2},~~~~~~~~~~~~~~~~~~~~~~~~~~~~~~~~~~~~~~~~~~~~{\rm in}~\Omega~ \\
&\partial_{t}H -\Delta H=\Phi^{3},~~~~~~~~~~~~~~~~~~~~~~~~~~~~~~~~~~~~{\rm in}~\Omega~ \\
&(qI-\mathbb{D} v)e_{3}=(h -\sigma\Delta_{\star}h)e_3+\Phi^{4},~H=0,~~~~~{\rm on}~\Sigma~\\
&\partial_{t} h- v_{3} =\Phi^{5}, ~~~~~~~~~~~~~~~~~~~~~~~~~~~~~~~~~~~~~~~{\rm on}~\Sigma\\
&v=H=0,~~~~~~~~~~~~~~~~~~~~~~~~~~~~~~~~~~~~~~~~~~~{\rm on}~\Sigma_{-1}.
\end{aligned}
\right.
\end{equation}
Then
\begin{equation}\label{3.15}
\begin{aligned}
\partial_t&\left(\int_{\Omega} \frac{|v|^{2}}{2} +\int_{\Omega}\frac{|H|^2}{2}+ \int_{\Sigma} \frac{|h|^{2}}{2}
+\sigma\int_\Sigma\frac{| Dh|^2}{2}\right)+ \int_{\Omega} \frac{|\mathbb{D} v|^{2}}{2}+ \int_\Omega|\nabla H|^2\\
= &\int_{\Omega} v\cdot( \Phi^{1}-\nabla\Phi^2)+\int_{\Omega}( q  \Phi^{2}+ H\cdot \Phi^3)-  \int_{\Sigma} v\cdot \Phi^{4} + \int_{\Sigma}  (h-\sigma\Delta_\star h) \Phi^{5},
\end{aligned}
\end{equation}

\end{lemm}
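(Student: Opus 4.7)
The plan is to mimic the proof of Lemma \ref{lemm3.1}, but in the much simpler constant-coefficient setting where $\mathcal{A}=I$, $J=1$, and $\mathcal{N}=e_3$, so that no geometric identities like \eqref{1.13} are needed. Specifically, I would take the $L^2(\Omega)$ inner product of the momentum equation in \eqref{3.14} with $v$ and of the induction equation with $H$, then integrate by parts, invoking the boundary conditions $v|_{\Sigma_{-1}}=0$ and $H|_{\Sigma\cup\Sigma_{-1}}=0$, together with the dynamic boundary condition on $\Sigma$ and the kinematic condition for $h$.

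The main computational point is to rewrite the viscous term in its symmetric form. Since
\[
(\operatorname{div}\mathbb{D}v)_i=\partial_j(\partial_i v_j+\partial_j v_i)=\partial_i\operatorname{div} v+\Delta v_i,
\]
one has $-\Delta v=-\operatorname{div}\mathbb{D}v+\nabla\Phi^{2}$, so integrating by parts,
\[
\int_{\Omega}(-\Delta v+\nabla q)\cdot v=\int_{\Omega}\Bigl(\frac{|\mathbb{D}v|^{2}}{2}-q\,\Phi^{2}+\nabla\Phi^{2}\cdot v\Bigr)+\int_{\Sigma}\bigl((qI-\mathbb{D}v)e_{3}\bigr)\cdot v,
\]
using symmetry of $\mathbb{D}v$ to get $\mathbb{D}v:\nabla v=\tfrac12|\mathbb{D}v|^{2}$. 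The boundary condition on $\Sigma$ converts the last term into
\[
\int_{\Sigma}(h-\sigma\Delta_{\star}h)\,v_{3}+\int_{\Sigma}\Phi^{4}\cdot v.
\]
Next I substitute $v_{3}=\partial_{t}h-\Phi^{5}$ from the kinematic equation. Integration by parts on $\Sigma$ (which has no boundary) turns $\int_{\Sigma}-\sigma\Delta_{\star}h\,\partial_{t}h$ into $\tfrac{d}{dt}\int_{\Sigma}\tfrac{\sigma|Dh|^{2}}{2}$, and $\int_{\Sigma}h\,\partial_{t}h=\tfrac{d}{dt}\int_{\Sigma}\tfrac{|h|^{2}}{2}$.

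Combining these with $\int_{\Omega}\partial_{t}v\cdot v=\tfrac{d}{dt}\int_{\Omega}\tfrac{|v|^{2}}{2}$ yields the velocity/free-surface half of \eqref{3.15}. For the magnetic part, pairing $\partial_{t}H-\Delta H=\Phi^{3}$ with $H$ and integrating by parts (with $H=0$ on $\partial\Omega$, so no boundary terms appear) directly gives
\[
\frac{d}{dt}\int_{\Omega}\frac{|H|^{2}}{2}+\int_{\Omega}|\nabla H|^{2}=\int_{\Omega}H\cdot\Phi^{3}.
\]
Adding the two identities produces \eqref{3.15}.

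I do not expect any serious obstacle: the computation is an unweighted version of the calculation already carried out for Lemma \ref{lemm3.1}. The only points that require mild care are (i) producing the correct $-\nabla\Phi^{2}$ contribution on the right-hand side when converting $\Delta v$ into $\operatorname{div}\mathbb{D}v$, so that the final right-hand side reads $v\cdot(\Phi^{1}-\nabla\Phi^{2})$, and (ii) correctly identifying the $\sigma|Dh|^{2}/2$ energy by integrating $-\sigma\Delta_{\star}h\,\partial_{t}h$ by parts on the closed surface $\Sigma$.
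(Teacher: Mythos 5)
Your proposal is correct and follows essentially the same route as the paper: rewriting the momentum equation in stress-divergence form as $\partial_t v+\operatorname{div}(qI-\mathbb{D}v)=\Phi^1-\nabla\Phi^2$ via $\operatorname{div}v=\Phi^2$, testing with $v$ and $H$, and then inserting the dynamic and kinematic boundary conditions on $\Sigma$. The sign bookkeeping for the $-\nabla\Phi^2$ term, the $\tfrac12|\mathbb{D}v|^2$ identity, and the $\sigma|Dh|^2/2$ surface energy all match the paper's computation.
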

\begin{proof}
From the first and second equation in \eqref{3.14}, we can rewrite the first one as
\begin{equation}\label{3.16}
\partial_tv+{\rm div}(qI-\mathbb{D}v)=\Phi^1-\nabla\Phi^2.
\end{equation}
Taking the inner product of the \eqref{3.16} with $v$ and the third equation in \eqref{3.14} with $H$, integrating by parts over $\Omega$ and then adding the resulting equations together, one has
\begin{equation*}
\begin{aligned}
&\partial_t \left(\int_{\Omega} \frac{|v|^{2}}{2}+\int_{\Omega} \frac{|H|^{2}}{2}+\int_\Sigma \frac{|h|^2}{2}
+\sigma\int_\Sigma\frac{| Dh|^2}{2}\right)
-\int_\Omega q{\rm div} v+\int_\Omega\frac{|\mathbb{D}v|^2}{2}\\
+\int_\Sigma&(qI-\mathbb{D}v)e_3\cdot v+\int_\Omega|\nabla H|^2=\int_\Omega(\Phi^1-\nabla\Phi^2)\cdot v+\int_\Omega\Phi^3\cdot H.
\end{aligned}
\end{equation*}
Furthermore, we bring ${\rm div}v=\Phi ^2$, $(qI-\mathbb{D}v)e_3=(h-\sigma\Delta_\star h)e_3+\Phi^4$ and $v_3=\partial_t h-\Phi^5$ into the above equation, then \eqref{3.15} follows.
\end{proof}

\subsection{Some useful estimates}

Before having a priori estimates on the nonlinear terms, we give the useful $L^\infty$ estimates for removing the appearance of $J$ factors.
\begin{lemm}\label{lemm4.1}
There exists a universal $0<\delta<1$ so that if $\|\eta\|^{2}_{5/2}\leq \delta$, then we have the estimate
\begin{equation} \label{4.1}
\| J-1\|^{2}_{L^{\infty}} + \|A\|_{L^{\infty}}^{2}+ \|B\|_{L^{\infty}}^{2}\leq \frac{1}{2}, ~~and~~\|K\|^{2}_{L^{\infty}}+\|\mathcal{A} \|^{2}_{L^{\infty}}\lesssim 1,
\end{equation}
\end{lemm}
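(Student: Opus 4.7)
The plan is to reduce the estimate to a single bound on $\bar\eta$ and its first derivatives in $L^\infty$, which then controls every quantity appearing in the statement. From the definitions \eqref{1.10}--\eqref{1.11}, together with the fact that $\tilde b = 1+x_3 \in [0,1]$ on $\Omega$, one has the pointwise inequalities
\begin{equation*}
|A| \leq |\partial_1\bar\eta|, \qquad |B| \leq |\partial_2\bar\eta|, \qquad |J-1| \leq |\bar\eta| + |\partial_3\bar\eta|.
\end{equation*}
So it suffices to bound $\|\bar\eta\|_{L^\infty}$ and $\|\nabla\bar\eta\|_{L^\infty}$ by a small multiple of $\|\eta\|_{5/2}$.

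For this I would invoke the standard mapping property of the Poisson extension defined in \eqref{7.1}, which gives
\begin{equation*}
\|\bar\eta\|_{H^3(\Omega)} \lesssim \|\eta\|_{H^{5/2}(\Sigma)}.
\end{equation*}
Since $\Omega \subset \mathbb{R}^3$, the Sobolev embedding $H^2(\Omega) \hookrightarrow L^\infty(\Omega)$ applied to $\bar\eta$ and to each $\partial_i\bar\eta$ yields
\begin{equation*}
\|\bar\eta\|_{L^\infty} + \|\nabla\bar\eta\|_{L^\infty} \lesssim \|\bar\eta\|_{H^3} \lesssim \|\eta\|_{5/2}.
\end{equation*}
Combined with the pointwise inequalities above, this produces $\|J-1\|_{L^\infty}^2 + \|A\|_{L^\infty}^2 + \|B\|_{L^\infty}^2 \leq C_0\|\eta\|_{5/2}^2 \leq C_0\delta$ for some universal constant $C_0$.

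To close, I would choose $\delta$ small enough that $C_0\delta \leq 1/2$; the first asserted inequality then holds, and in particular $|J-1| \leq 1/\sqrt{2} < 1$ pointwise, so $J$ is bounded away from zero uniformly. This gives $\|K\|_{L^\infty} = \|J^{-1}\|_{L^\infty} \lesssim 1$, and $\|\mathcal{A}\|_{L^\infty} \lesssim 1$ then follows at once from the explicit form of $\mathcal{A}$, whose entries are $1$, $0$, $\pm AK$, $\pm BK$, and $K$, each of which has already been bounded. The argument is essentially routine; there is no real obstacle beyond citing the harmonic extension estimate (which is the content of the preliminaries section referenced by \eqref{7.1}) and keeping enough care with the universal constants so that a single $\delta$ simultaneously produces the $1/2$ bound and the uniform positivity of $J$.
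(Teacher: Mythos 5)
Your proposal is correct and follows essentially the same route as the paper: bound $\|J-1\|_{L^\infty}^2+\|A\|_{L^\infty}^2+\|B\|_{L^\infty}^2$ by $\|\bar\eta\|_3^2\lesssim\|\eta\|_{5/2}^2$ via the Poisson extension estimate of Lemma \ref{lemmA.4} (with the Sobolev embedding $H^2(\Omega)\hookrightarrow L^\infty(\Omega)$, which the paper leaves implicit), then take $\delta$ small. Your explicit treatment of $K=J^{-1}$ and the entries of $\mathcal{A}$ simply fills in details the paper compresses into ``\eqref{4.1} follows.''
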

\begin{proof}
According to the definitions of $A,~B,~J$ given in \eqref{1.10}-\eqref{1.11} and Lemma \ref{lemmA.4}, we have that
 \begin{equation}
 \|J-1\|_{L^{\infty}}^2+\|A\|^2_{L^\infty}+\|B\|^2_{L^\infty}\lesssim\|\bar \eta\|^2_3\lesssim\|\eta\|^2_{5/2}.
 \end{equation}
 Then if $\delta$ is sufficiently small, \eqref{4.1} follows.
\end{proof}
Furthermore, we provide an estimate for $\partial^n_t\ma$.
\begin{lemm}\label{lemm4.2}
For $n=2N$ or $n=N+2$, we have
\begin{equation}
\l\partial^{n+1}_t J\r^2_0+\l\partial^{n+1}_t\ma\r^2_0\lesssim\mathcal D_n.
\end{equation}
\end{lemm}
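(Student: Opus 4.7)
The plan is to exploit the almost-affine structure of $J$ in $\bar\eta$ and to treat $\mathcal{A}$ as a smooth function of $(A,B,J)$, then reduce everything to (i) a harmonic-extension estimate for $\bar\eta$ and (ii) Sobolev embedding for the lower-order Leibniz factors.

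First I would handle $J$. Since $J=1+\bar\eta+\tilde b\,\partial_3\bar\eta$ is affine in $\bar\eta$ with coefficients independent of $t$, differentiating in $t$ commutes with all spatial operators:
\begin{equation*}
\partial_t^{n+1}J=\partial_t^{n+1}\bar\eta+\tilde b\,\partial_3\partial_t^{n+1}\bar\eta,
\qquad
\|\partial_t^{n+1}J\|_0\lesssim \|\partial_t^{n+1}\bar\eta\|_1.
\end{equation*}
By the standard harmonic-extension bound for $\mathcal{P}$ (the lemma labeled \eqref{7.1} in the paper gives $\|\mathcal{P}\eta\|_{H^{s+1/2}(\Omega)}\lesssim\|\eta\|_{H^s(\Sigma)}$), this is dominated by $\|\partial_t^{n+1}\eta\|_{1/2}^2$. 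Inspecting $\mathcal{D}_n$ in \eqref{2.2}, the $j=n+1$ summand of the last sum is exactly $\|\partial_t^{n+1}\eta\|_{2n-2(n+1)+5/2}^2=\|\partial_t^{n+1}\eta\|_{1/2}^2$, so this bound lies inside $\mathcal{D}_n$.

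For $\mathcal{A}$, its non-trivial entries are $-AK$, $-BK$, and $K$, with $A=\partial_1\bar\eta\,\tilde b$, $B=\partial_2\bar\eta\,\tilde b$, and $K=J^{-1}$. Hence it suffices to bound $\partial_t^{n+1}(AK)$, $\partial_t^{n+1}(BK)$, and $\partial_t^{n+1}K$. Using $JK=1$ and differentiating $n+1$ times via Leibniz yields the identity
\begin{equation*}
J\,\partial_t^{n+1}K=-\sum_{k=1}^{n+1}\binom{n+1}{k}(\partial_t^k J)(\partial_t^{n+1-k}K),
\end{equation*}
which by an induction (equivalent to Faà di Bruno) expresses $\partial_t^{n+1}K$ as a polynomial in $K$ and the collection $\{\partial_t^{\alpha_i}J\}$ with $\alpha_i\ge 1$ and $\sum_i\alpha_i=n+1$. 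Multiplying by $A$ or $B$, and again Leibniz-expanding, gives a similar structure for $\partial_t^{n+1}(AK)$ and $\partial_t^{n+1}(BK)$. In the top-order term only one factor $\partial_t^{n+1}J$ (or $\partial_t^{n+1}A$, $\partial_t^{n+1}B$) appears and the other factors are $K$, $A$, or $B$ themselves, controlled in $L^\infty$ by Lemma \ref{lemm4.1} under the smallness hypothesis.

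For each generic product $K^r\prod_i\partial_t^{\alpha_i}J$ (with at most one factor carrying derivatives of top order), I would put the highest-$t$-order factor in $L^2(\Omega)$ and the remaining factors in $L^\infty(\Omega)$, using the embedding $H^2(\Omega)\hookrightarrow L^\infty(\Omega)$ together with the harmonic-extension estimate to pass from $\|\partial_t^{\alpha}\bar\eta\|_{H^2(\Omega)}$ to $\|\partial_t^{\alpha}\eta\|_{H^{3/2}(\Sigma)}$. Because every such lower factor has $2\alpha_i\le n$, the required $H^{3/2}$ norm is controlled by $\mathcal{E}_n$ (hence by a universal constant via the smallness assumption), while the single top-order factor contributes $\|\partial_t^{n+1}\eta\|_{1/2}^2\lesssim\mathcal{D}_n$. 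Summing the finitely many Leibniz terms produces the claimed bound. The main technical obstacle is the index bookkeeping: verifying, for every admissible partition $\sum\alpha_i=n+1$, that exactly one $\alpha_{i_0}$ can be declared "top order" so that the remaining $\alpha_i$ with $\alpha_i\le n$ satisfy $2\alpha_i+3/2\le 2n-2\alpha_i+5/2$ on the Sobolev scale of $\mathcal{E}_n$. This works uniformly for both choices $n=2N$ and $n=N+2$, since the constraint $N\ge 3$ leaves enough regularity room in $\mathcal{E}_{N+2}$ to absorb the $L^\infty$ factors.
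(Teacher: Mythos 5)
Your proposal is correct and follows essentially the same route as the paper: temporal derivatives commute with the Poisson extension, so Lemma \ref{lemmA.4} gives $\l\partial_t^{n+1}\bar\eta\r_1\lesssim\l\partial_t^{n+1}\eta\r_{1/2}$, which is exactly the $j=n+1$ term of $\mathcal D_n$, and then the definitions of $J,A,B,K$ together with Leibniz expansions, Lemma \ref{lemm4.1} and Sobolev embeddings for the lower-order factors handle the components $K$, $AK$, $BK$ of $\ma$. Your write-up merely makes explicit the Fa\`a di Bruno bookkeeping that the paper leaves implicit (note only that factors like $\partial_t^{\alpha_i}J$ contain $\partial_3\partial_t^{\alpha_i}\bar\eta$, so their $L^\infty$ control really uses $\l\partial_t^{\alpha_i}\eta\r_{5/2}$ rather than $H^{3/2}$, which the energies still comfortably supply).
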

\begin{proof}
 Since temporal derivatives commute with the Poisson integral, applying Lemma \ref{lemmA.4}, we have
\begin{equation*}
\l\partial^{m+1}_t\bar\eta\r^2_1=\l\partial^{m+1}_t\bar\eta\r^2_0+\l\nabla\partial^{m+1}_t\bar\eta\r^2_0\lesssim
\l\partial^{m+1}_t \eta\r^2_{1/2},~~{\rm for}~m\geq0.
\end{equation*}
From the definition of $\mathcal D_n$, we have
\begin{equation}
\l\partial^{n+1}_t\eta\r^2_{1/2}\lesssim\mathcal D_n,~~{\rm for}~n=2N~ {\rm or~}n=N+2.
\end{equation}
Then, according to the definition of $J,~A,~B$ and $K$, we have
\begin{equation*}
\l\partial_t^{n+1}J\r^2_0+\l\partial_t^{n+1}A\r^2_0+\l\partial_t^{n+1}B\r^2_0+\l\partial_t^{n+1} K\r\lesssim \mathcal D_n,~{\rm for}~n=2N~ {\rm or~}n=N+2.
\end{equation*}
Using the Sobolev embeddings we complete the proof of $\ma$ since the components of $\ma$ are either unity, $K,~AK$ or $BK$.
\end{proof}

\section{For the case $\sigma>0$}

\subsection{Nonlinear estimates}

We will employ the form \eqref{3.1} to study the temporal derivative of solutions to \eqref{1.14}. That is, we employ $\partial_{t}$ to \eqref{1.14} and set $(v,H,q,h)=(\partial_{t}u,\partial_{t} b,\partial_{t}p,\partial_{t}\eta)$ satisfying \eqref{3.1} for certain terms $F^{i}$. Below we record the form of these forcing terms $F^{i}, i=1,2,3,4,5$.
\begin{equation}\label{13.4}
\begin{aligned}
&F^1_i=\partial_{t} (\partial_{t} \overline{\eta} \tilde{b} K) \partial_{3}u-\partial_{t} (u_{j} \mathcal{A}_{jk}) \partial_{k} u_{i}-\partial_{t} \mathcal{A}_{ik} \partial_{k} p+\partial_{t} \mathcal{A}_{jk} \partial_{k}(\mathcal{A}_{im}\partial_{m} u_{j}+ \mathcal{A}_{jm} \partial_{m} u_{i})\\
&~~~~~~~~~~~~~~~+\mathcal{A}_{jk} \partial_{k} (\partial_{t} \mathcal{A}_{im} \partial_{m} u_{j}+ \partial_{t} \mathcal{A}_{jm} \partial_{m} u_{i})+\partial_t((b_j+\bar B_j)\ma_{jk})\partial_k b_i,
\end{aligned}
\end{equation}
\begin{equation}\label{13.5}
F^{2}_i = -\partial_{t} \mathcal{A}_{ij} \partial_{j} u_{i},
\end{equation}
\begin{equation}\label{13.6}
\begin{aligned}
&F^{3}= \partial_{t} (\partial_{t} \overline{\eta} \tilde{b} K) \partial_{3} b-\partial_{t} (u_{j} \mathcal{A}_{jk})\partial_{k}b
+ \partial_{t} \mathcal{A}_{il} \partial_{l}\mathcal{A}_{im}\partial_{m} b\\
&~~~~~~~~~~~~+ \mathcal{A}_{il} \partial_{l}\partial_{t} \mathcal{A}_{im}\partial_{m}b+\partial_t((b_j+\bar B_j)\ma_{jk})\partial_k b,
\end{aligned}
\end{equation}
\begin{equation}\label{13.7}
\begin{aligned}
&F_{i}^{4} =  (\mathcal{A}_{ik} \partial_{k} u_{j}+ \mathcal{A}_{jk} \partial_{k} u_{i} )\partial_{t} \mathcal{N}_{j} +(\partial_{t}\mathcal{A}_{ik} \partial_{k} u_{j}+ \partial_{t}\mathcal{A}_{jk} \partial_{k} u_{i} ) \mathcal{N}_{j}\\
&~~~~~~~~~~~+(\eta-p)\partial_{t} \mathcal{N}_{i} -(\sigma \partial_{t} M -\sigma\partial_{t} \Delta_{\star} \eta) \mathcal{N}_{i} -\sigma M \partial_{t} \mathcal{N}_{i},
\end{aligned}
\end{equation}
\begin{equation}\label{13.9}
F^{5} = \partial_{t} D \eta \cdot u,
\end{equation}

Next, we will estimate the nonlinear terms $F^{i}$ for $i=1,...,6$, which will be used principally to estimates the interaction terms on the right side of \eqref{3.2}.

\begin{lemm}\label{lemm14.1}
Let $F^{1},...,F^{5}$  de defined in \eqref{13.4}-\eqref{13.9}. let $\mathcal{E}$ and $\mathcal{D}$ be as defined in \eqref{12.1} and \eqref{12.2}. Suppose that $\mathcal{E}\leq \delta$, where $\delta\in (0,1)$ is the universal constant given in Lemma \ref{lemm4.1}. Then,
\begin{equation}\label{14.1}
\|F^{1}\|_{0}+\|F^{2}\|_{0}+\|F^{3}\|_{0}+\|F^{4}\|_{0}+ \|F^{5}\|_{0}\lesssim \sqrt{\mathcal{E}\mathcal{D}},
\end{equation}
\begin{equation}\label{14.2}
\left|\int_{\Omega}  p\partial_t( F^{2} J)\right| \lesssim \sqrt{\mathcal{E}} \mathcal{D},
~~~and~~~\left|\int_{\Omega} p F^{2} J\right| \lesssim \mathcal{E}^{\frac{3}{2}}.
\end{equation}
\end{lemm}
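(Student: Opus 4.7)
The plan is to reduce each inequality to a H\"older splitting of $L^p$ norms, absorbing the geometric coefficients via Lemma \ref{lemm4.1} and the harmonic-extension estimates of Lemma \ref{lemm4.2}. The guiding principle is that every monomial in $F^i$ is a product of a factor built from $\eta,\bar\eta$ (small by the energy hypothesis) and a derivative of $u,b,p,$ or $\eta$; the task is to peel off one factor of size $\sqrt{\mathcal{E}}$ and let the rest carry $\sqrt{\mathcal{D}}$. First I would record the ambient bounds. Lemma \ref{lemm4.1} gives $\|\mathcal{A}\|_\infty,\|J\|_\infty,\|K\|_\infty\lesssim 1$ and $\|A\|_\infty,\|B\|_\infty,\|J-1\|_\infty\lesssim\|\eta\|_{5/2}\lesssim\sqrt{\mathcal{E}}$. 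Harmonic extension yields $\|\partial_t^j\bar\eta\|_{H^s(\Omega)}\lesssim\|\partial_t^j\eta\|_{H^{s-1/2}(\Sigma)}$, hence $\|\partial_t\mathcal{A}\|_1,\|\partial_t^2\mathcal{A}\|_0\lesssim\sqrt{\mathcal{E}}$ and $\|\partial_t\mathcal{A}\|_2,\|\partial_t^2\mathcal{A}\|_1\lesssim\sqrt{\mathcal{D}}$, with analogous bounds for $\partial_t^j J$. Energy gives $\|u\|_2,\|b\|_2,\|p\|_1\lesssim\sqrt{\mathcal{E}}$ and dissipation gives $\|u\|_3,\|b\|_3,\|\partial_t u\|_1,\|\partial_t b\|_1,\|p\|_2\lesssim\sqrt{\mathcal{D}}$.

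Next, I would treat each $F^i$ in $L^2$ term by term. Two-factor pieces like $\partial_t^2\bar\eta\,\tilde bK\,\partial_3 u$ in $F^1$ admit the $L^2\times L^\infty$ split $\|\partial_t^2\bar\eta\|_0\|K\|_\infty\|\partial_3 u\|_\infty\lesssim\sqrt{\mathcal{E}\mathcal{D}}$ via $H^2\hookrightarrow L^\infty$. Three-factor pieces like $u_j\partial_t\mathcal{A}_{jk}\partial_k u_i$ take $L^\infty\times L^2\times L^\infty$: $\|u\|_\infty\|\partial_t\mathcal{A}\|_0\|\partial u\|_\infty\lesssim\sqrt{\mathcal{E}}\cdot\sqrt{\mathcal{E}}\cdot\sqrt{\mathcal{D}}\leq\sqrt{\mathcal{E}\mathcal{D}}$ by $\mathcal{E}\leq\delta$. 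The pieces of $F^1,F^3$ involving second derivatives of $u$ or $b$ (with prefactors like $K^2(1+A^2+B^2)-1$ or $AK$) have coefficients that vanish at $\eta=0$; the small factor $\sqrt{\mathcal{E}}$ is then supplied by $\|A\|_\infty,\|B\|_\infty,\|J-1\|_\infty$ while $\sqrt{\mathcal{D}}$ comes from $\|u\|_3$ or $\|b\|_3$. The pieces of $F^1, F^3$ that are products of $\partial_t\mathcal{A}$ with higher $u$- or $b$-derivatives (arising from the expansion of $\partial_t\mathcal{A}_{jk}\partial_k(\mathcal{A}_{im}\partial_m u_j)$ and its analogues) are handled analogously by distributing one factor of $\mathcal{A}\in L^\infty$ and estimating the remaining two factors in $L^\infty\times L^2$. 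The boundary term $F^4$ is estimated on $\Sigma$ after noting that the mean-curvature correction $M-\Delta_\star\eta=\partial_i\!\bigl[\partial_i\eta(1/\sqrt{1+|D\eta|^2}-1)\bigr]$ is genuinely quadratic in $D\eta$, so $\partial_t(M-\Delta_\star\eta)$ scales as $D\eta\cdot\partial_t D^2\eta$, controlled by $\|\eta\|_3\|\partial_t\eta\|_{5/2}\lesssim\sqrt{\mathcal{E}\mathcal{D}}$; the remaining pieces of $F^4$ are direct product estimates combined with the trace theorem. Finally $F^5=\partial_tD\eta\cdot u$ on $\Sigma$ satisfies $\|F^5\|_{L^2(\Sigma)}\leq\|\partial_tD\eta\|_{L^2(\Sigma)}\|u\|_{L^\infty(\Sigma)}\lesssim\|\partial_t\eta\|_{3/2}\|u\|_3\lesssim\sqrt{\mathcal{E}\mathcal{D}}$ by trace.

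For the cubic estimate $|\int_\Omega pF^2J|\lesssim\mathcal{E}^{3/2}$, I would apply the H\"older triple $L^6\times L^3\times L^2$ to $(p,\partial_t\mathcal{A},\partial u)$, using $H^1\hookrightarrow L^6$ and $H^{1/2}\hookrightarrow L^3$, so the product is bounded by $\|p\|_1\|\partial_t\eta\|_{3/2}\|u\|_2\lesssim\mathcal{E}^{3/2}$. For $|\int_\Omega p\partial_t(F^2J)|\lesssim\sqrt{\mathcal{E}}\mathcal{D}$ I would expand $\partial_t(F^2J)=-\partial_t^2\mathcal{A}_{ij}\partial_j u_iJ-\partial_t\mathcal{A}_{ij}\partial_j\partial_tu_iJ-\partial_t\mathcal{A}_{ij}\partial_j u_i\partial_tJ$ and estimate each piece by making $p$ supply the single $\sqrt{\mathcal{E}}$-factor. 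The most delicate piece is $\int_\Omega p\,\partial_t^2\mathcal{A}\,\partial u\,J$: I would use the triple H\"older $L^6\times L^3\times L^2$ with $p\in L^6$ (controlled by $\|p\|_1\lesssim\sqrt{\mathcal{E}}$), $\partial_t^2\mathcal{A}\in L^3$ (controlled by $\|\partial_t^2\eta\|_{1/2}\lesssim\sqrt{\mathcal{D}}$), and $\partial u\in L^2$ (controlled by $\sqrt{\mathcal{D}}$), giving exactly $\sqrt{\mathcal{E}}\mathcal{D}$. The remaining two pieces admit an $L^2\times L^\infty\times L^2$ splitting, placing $\partial_t\mathcal{A}$ or $\partial_tJ$ in $L^\infty$ at cost $\sqrt{\mathcal{D}}$, $p$ in $L^2$ at cost $\sqrt{\mathcal{E}}$, and $\partial\partial_tu$ or $\partial u$ in $L^2$ at cost $\sqrt{\mathcal{D}}$.

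The main obstacle is the $\partial_t^2\mathcal{A}$ piece in $\int p\,\partial_t(F^2J)$: since $\partial_t^2\bar\eta$ only lives in $L^2(\Omega)$ at the energy level and in $H^1(\Omega)$ at the dissipation level through harmonic extension, one cannot afford to place $\partial_t^2\mathcal{A}$ in $L^\infty$ without incurring extra regularity demands beyond what $\mathcal{D}$ supplies. The pressure must therefore take the role of the $\sqrt{\mathcal{E}}$-factor via $H^1\hookrightarrow L^6$, and $\partial_t^2\mathcal{A}$ must sit in $L^3$ via $H^{1/2}\hookrightarrow L^3$; this is the unique H\"older splitting compatible with the available regularities and dictates the shape of the final bound $\sqrt{\mathcal{E}}\mathcal{D}$. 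Once the right splitting is identified, every remaining estimate is a routine application of H\"older, Sobolev embedding, and the harmonic-extension bounds recorded at the outset.
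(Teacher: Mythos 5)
Your overall strategy---term-by-term H\"older/Sobolev/trace estimates with the geometric factors controlled through Lemma \ref{lemm4.1} and the harmonic-extension bound of Lemma \ref{lemmA.4}---is the same as the paper's, and your handling of \eqref{14.1} and of the cubic bound $\left|\int_\Omega p F^2 J\right|\lesssim \mathcal{E}^{3/2}$ is sound (indeed more detailed than the paper, which only writes out $F^1$ and $F^2$). The genuine gap is exactly at the term you single out as most delicate, $\int_\Omega p\,\partial_t^2\mathcal{A}_{ij}\,\partial_j u_i\,J$. Your splitting requires $\|\partial_t^2\mathcal{A}\|_{L^3(\Omega)}\lesssim \|\partial_t^2\eta\|_{1/2}\lesssim\sqrt{\mathcal{D}}$, but this is not available: $\partial_t^2\mathcal{A}$ carries a full spatial gradient of $\partial_t^2\bar\eta$, and Lemma \ref{lemmA.4} converts $\|\partial_t^2\eta\|_{1/2}$ only into $\|\partial_t^2\bar\eta\|_{H^1(\Omega)}$, i.e.\ into an $L^2$ bound on $\partial_t^2\mathcal{A}$; the embedding $H^{1/2}(\Omega)\hookrightarrow L^3(\Omega)$ you invoke would need $\|\partial_t^2\bar\eta\|_{H^{3/2}(\Omega)}$, hence $\|\partial_t^2\eta\|_{H^1(\Sigma)}$, which $\mathcal{D}$ does not contain (it only has $\|\partial_t^2\eta\|_{1/2}^2$). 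The same half-derivative slip appears in your preliminary list: $\|\partial_t^2\mathcal{A}\|_0\lesssim\sqrt{\mathcal{E}}$ and $\|\partial_t^2\mathcal{A}\|_1\lesssim\sqrt{\mathcal{D}}$ are both too strong; what the energy and dissipation actually give is $\|\partial_t^2\bar\eta\|_0\lesssim\sqrt{\mathcal{E}}$ and $\|\partial_t^2\mathcal{A}\|_0\lesssim\sqrt{\mathcal{D}}$.

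Consequently your closing claims---that one ``cannot afford'' to place $\partial_t^2\mathcal{A}$ in $L^\infty$ and that $p\in L^6$, $\partial_t^2\mathcal{A}\in L^3$ is the unique admissible splitting---are misdirected: the paper resolves this term the other way around. Since $\mathcal{D}$ contains $\|p\|_2^2$, one puts $p$ in $L^\infty$ at cost $\sqrt{\mathcal{D}}$, keeps $\partial_t^2\mathcal{A}$ in $L^2$ at cost $\|\partial_t^2\eta\|_{1/2}\lesssim\sqrt{\mathcal{D}}$, and lets $\nabla u$ in $L^2$ supply the small factor through $\|u\|_1\lesssim\sqrt{\mathcal{E}}$, which yields $\sqrt{\mathcal{E}}\mathcal{D}$; the $\sqrt{\mathcal{E}}$ comes from $u$, not from $p$. (Alternatively, keep your $p\in L^6$ at cost $\sqrt{\mathcal{E}}$, take $\nabla u\in L^3$ at cost $\|u\|_2\lesssim\sqrt{\mathcal{E}}$ and $\partial_t^2\mathcal{A}\in L^2$ at cost $\sqrt{\mathcal{D}}$, and then absorb $\mathcal{E}\sqrt{\mathcal{D}}\lesssim\sqrt{\mathcal{E}}\mathcal{D}$ using $\mathcal{E}\lesssim\mathcal{D}$.) With that single step repaired, the remainder of your argument goes through.
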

\begin{proof}
Throughout the lemmas we will employ Holder's inequality, Sobolev embeddings, trace theory, Lemma \ref{lemm4.1} and Lemma \ref{lemmA.4}.
Firstly, we give the estimates for $F^{1}$.
\begin{equation*}
\begin{aligned}
\|\partial_t(\partial{\bar\eta}\tilde bK)\partial_3u\|_{0}\lesssim&\|\partial^2_t\bar\eta\|_0\|K\|_{L^\infty}\|\partial_3u\|_{L^\infty}
+\|\partial_t \bar{\eta}\|_{L^\infty}\|\partial_t \bar\eta\|_{1}\|\partial_3u\|_{L^\infty}\\
\lesssim&  \|\partial^2_t\eta\|_{-1/2}\|u\|_{3}+\|\partial_t\eta\|_{3/2}\|\partial_t\eta\|_{1/2}\|u\|_3\\
\lesssim&(\sqrt\mathcal E+\mathcal E)\sqrt D\lesssim \sqrt{\mathcal{E}\mathcal{D}}.
\end{aligned}
\end{equation*}
and the other terms of $F^1$ can be bounded in a similar way. Next, we control the second term $F^{2}$ as follows
\begin{equation*}
\|F^{2}\|_{0}\lesssim \|\partial_{t} \nabla \overline{\eta}\|_{0} \|\nabla u\|_{L^{\infty}} \lesssim\sqrt{\mathcal{E}\mathcal{D}} ,
\end{equation*}
Similar to the $F^{1},F^{2}$ term, whereas $F^{3},F^{4},F^{5}$ term can be handled as follows
\begin{equation*}
\|F^{3}\|_{0}+\|F^{4}\|_{0}+ \|F^{5}\|_{0}\lesssim \sqrt{\mathcal{E}\mathcal{D}}.
\end{equation*}
For the term involves in \eqref{14.2}, we have
\begin{equation*}
\begin{aligned}
&\left|\int_\Omega p(\partial_tJF^2+J\partial_tF^2)\right|\\
\lesssim& \|p\|_{L^\infty}\|\partial_t\bar\eta\|_1\|F^2\|_0
+ \|p\|_{L^\infty}\|J\|_{L^\infty}(\|u\|_1\|\partial^2_t\bar\eta\|_1
+\|\partial_t\bar\eta\|_1\|\partial_tu\|_1)\\
\lesssim&\|p\|_2\|\partial_t\eta\|_{1/2}\|F^2\|_0+\|p\|_2
(\|u\|_1\|\partial^2_t\eta\|_{1/2}+\|\partial_t\eta\|_{1/2}\|\partial_tu\|_1)\\
\lesssim&(\mathcal E\mathcal D+\sqrt\mathcal E\mathcal D)\lesssim\sqrt\mathcal E\mathcal D,
\end{aligned}
\end{equation*}
and
\begin{equation*}
\begin{aligned}
\left|\int_\Omega pJF^2\right|\lesssim\|p\|_{L^6}\|\partial_t\nabla\bar\eta\|_{L^{2}}\|\nabla u\|_{L^3}\|J\|_{L^\infty}
\lesssim\|p\|_1\|u\|_2\|\partial_t\eta\|_{1/2}\lesssim\mathcal E^{3/2}.
\end{aligned}
\end{equation*}
Then we complete the proof of this lemma.
\end{proof}

Then, we turn our attention to the nonlinear terms $G^{i}$ for $i=1,...,5$, as defined in \eqref{3.11}-\eqref{3.16}.

\begin{lemm}\label{lemm14.2}
Let $G^{1},...,G^{5}$  de defined in \eqref{3.11}-\eqref{3.16} and let $\mathcal{E}$ and $\mathcal{D}$ be as defined in \eqref{12.1} and \eqref{12.2}. Suppose that $\mathcal{E}\leq \delta$, where $\delta\in (0,1)$ is the universal constant given in Lemma \ref{lemm4.1}, and that $D<\infty$. Then,
\begin{equation}\label{14.3}
\l G^{1}\r_{1} +\l G^{2}\r_{2}+ \l G^{3} \r_{1} + \l G^{4}\r_{3/2}+\l G^{5}\r_{5/2}+\l\partial_tG^{5}\r_{1/2} \lesssim \sqrt{\mathcal{E}\mathcal{D}},
\end{equation}
and
\begin{equation}\label{14.4}
\l G^{1}\r_{0} +\l G^{2}\r_{1}+\l G^{2}\r_{-1} +\l G^{3} \r_{0} + \l  G^{4}\r_{1/2}+\l G^{5}\r_{3/2}+\l G^{5}\r_{-1/2}\lesssim \mathcal{E}.
\end{equation}
\end{lemm}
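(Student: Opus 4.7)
The plan is to bound each $G^i$ term-by-term using product estimates in Sobolev spaces, trace theory, and the $L^\infty$ control of $\ma, J, K, A, B$ given by Lemma \ref{lemm4.1}. The structural observation is that each piece of $G^i$ is at least quadratic in the unknowns $(u,p,b,\eta)$ (or involves a factor that vanishes at the equilibrium, such as $\ma-I$, $J-1$, $K-1$, or $A,B$, all of which are controlled by $\bar\eta$ and hence, via Lemma \ref{lemmA.4}, by $\eta$). This lets us split each product into one factor absorbing a power of $\sqrt{\mathcal E}$ and one factor absorbing $\sqrt{\mathcal D}$ (for the $\sqrt{\mathcal{ED}}$ bounds) or into two factors each bounded in terms of $\sqrt{\mathcal E}$ (for the $\mathcal E$ bounds).

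For the bulk terms $G^1, G^2, G^3$, I would proceed as follows. The pieces of $G^1$ (and analogously $G^3$) are products of the form ``(coefficient built from $\bar\eta$, $\partial_t\bar\eta$, $\ma-I$ or its derivatives) $\times$ (derivative of $u,b,p$)''. For the $H^1(\Omega)$ norm I would use the algebra property of $H^2\cdot H^1\hookrightarrow H^1$ in 3D, assigning the $H^2$ slot to whichever factor has enough smoothness from the energy (e.g.\ $\|\bar\eta\|_3\lesssim\|\eta\|_{5/2}\lesssim\sqrt{\mathcal E}$ and $\|\nabla^2 u\|_1\lesssim\|u\|_3\lesssim\sqrt{\mathcal D}$) to produce $\sqrt{\mathcal{ED}}$; for the $L^2$ bound by $\mathcal E$ I would put both factors in $H^1$ or use $L^\infty \cdot L^2$. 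The term $G^2$ is a scalar of the same algebraic form, and the $H^2$ bound follows directly from an algebra-type product estimate with $A,B,K-1 \in H^2$ small. The $H^{-1}$ bound on $G^2$ is the delicate case: I would use the identity $G^2 = \partial_k((\delta_{3k}-J\ma_{3k})u_k/\ldots)$ and \eqref{1.13} to write $G^2$ in divergence form up to lower-order error, then dualize.

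For the boundary terms $G^4$ and $G^5$, I would work in $H^s(\Sigma)$ using the trace inequality to pass from bulk norms to boundary norms. Each summand of $G^4$ is a product $\partial_i\eta\cdot(\text{trace of a derivative of }u,p,\eta)$; for the $H^{3/2}(\Sigma)$ bound I assign $\partial_i\eta \in H^{5/2}(\Sigma)$ (from $\|\eta\|_{7/2}\lesssim\sqrt{\mathcal D}$) to the ``smooth'' slot and the velocity/pressure trace to the $H^{3/2}(\Sigma)$ slot via trace from $H^2(\Omega)$, combined with smallness from $\|\eta\|_3$. The surface-tension contribution $\sigma(M-\Delta_\star\eta)\mathcal N + \sigma\Delta_\star(\mathcal N-e_3)$ is handled by Taylor-expanding $M$ around the linearization $\Delta_\star\eta$: the quadratic remainder is schematically $D\eta\cdot D\eta\cdot D^2\eta$, estimated via algebra estimates in $H^{3/2}(\Sigma)$ once $\|\eta\|_{7/2}$ is available from $\mathcal D$. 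For $G^5=-D\eta\cdot u|_\Sigma$, the $H^{5/2}$ and $H^{3/2}$ bounds use algebra properties of Sobolev spaces on $\Sigma$ and the trace inequality, with $\partial_t G^5 = -D\partial_t\eta\cdot u - D\eta\cdot\partial_t u$ handled analogously in $H^{1/2}(\Sigma)$.

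The main obstacle I anticipate is the $\|G^4\|_{3/2}$ estimate, because it is the highest-regularity boundary bound and must simultaneously handle (i) the trace of $p$ and of $\nabla u$ at the top of the energy/dissipation, (ii) the nonlinear mean-curvature remainder from the $\sigma$ terms, and (iii) the exact distribution of derivatives between the ``$\eta$ factor'' (which must stay in the energy) and the ``velocity/pressure factor'' (which is in the dissipation) so that no single quantity is counted beyond what \eqref{12.1}--\eqref{12.2} allow. The remaining bounds follow by systematically repeating this splitting and invoking Hölder, the Sobolev embedding $H^{3/2}(\Sigma)\hookrightarrow L^\infty(\Sigma)$ in the two horizontal dimensions, and Lemmas \ref{lemm4.1} and \ref{lemmA.4}.
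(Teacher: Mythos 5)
Your overall scheme---tame product estimates, the algebra property of $H^{s}$ for $s>d/2$, trace theory, the $L^\infty$ bounds of Lemma \ref{lemm4.1} together with Lemma \ref{lemmA.4}, and a divergence-form rewriting for the negative-order norm of $G^{2}$---is exactly the standard Guo--Tice/Kim--Tice argument; the paper itself gives no proof of this lemma and simply defers to \cite{Kim}, Theorem 4.3, so on the genuinely quadratic terms your sketch is both correct and more informative than what the paper records, including your correct identification of $\|G^4\|_{3/2}$ as the delicate case.

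There is, however, a concrete gap: your foundational claim that ``each piece of $G^{i}$ is at least quadratic'' is false for the terms this paper actually puts into $G^{1}$ and $G^{3}$. The piece $G^{1,3}_i$ contains $(b_j+\bar B_j)\ma_{jk}\partial_k b_i$ and $G^{3,2}_i$ contains $(b_j+\bar B_j)\ma_{jk}\partial_k u_i$, and $\bar B$ is a fixed constant vector while $\ma$ is close to the identity (not small); hence $\bar B_j\ma_{jk}\partial_k b_i$ and $\bar B_j\ma_{jk}\partial_k u_i$ are \emph{linear} in the perturbation. For such terms your splitting can only produce a single factor: e.g. $\|\bar B_j\ma_{jk}\partial_k b_i\|_{1}\lesssim \|b\|_{2}\lesssim\min\bigl(\sqrt{\mathcal{E}},\sqrt{\mathcal{D}}\bigr)$, which is not $\lesssim\sqrt{\mathcal{E}\mathcal{D}}$ when both quantities are small, and $\|\bar B_j\ma_{jk}\partial_k b_i\|_{0}\lesssim\|b\|_{1}\lesssim\sqrt{\mathcal{E}}$, which is not $\lesssim\mathcal{E}$; no amount of H\"older/embedding bookkeeping converts a linear term into a quadratic bound, so the step as proposed fails for \eqref{14.3} and \eqref{14.4}. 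A correct treatment must either retain $\bar B\cdot\nabla b$ and $\bar B\cdot\nabla u$ on the left-hand side of \eqref{3.13} as part of the linear operator---where they cancel in the energy identity by the same antisymmetry used for $I_4$ in Lemma \ref{lemm3.1}---or impose smallness on $|\bar B|$. (To be fair, this difficulty is inherited from the paper, which cites a reference without a magnetic field; note also that the printed term $\sigma\Delta_{\star}(\mathcal{N}-e_{3})$ in $G^{4}$ is literally linear in $\eta$ and is presumably a typo for the quadratic remainder you describe, so your reading of the surface-tension contribution is the intended one.) You should amend your argument to address the $\bar B$-proportional terms explicitly rather than subsuming them under the quadratic splitting.
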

\begin{proof}
Here the estimates of $G^{1},...,G^{5}$  similar as [\cite{Kim}, Theorem 4.3] , so we omit it.
\end{proof}
 
\subsection{A priori estimates}
In this section we combine energy-dissipation estimates with various elliptic estimates and estimate the nonlinearities in order to deduce a system of a priori estimates.

\subsubsection{Energy-dissipation estimates}
In order to state our energy-dissipation estimates we must first introduce some notation. Recall that for a multi-index $\alpha=(\alpha_{0},\alpha_{1},\alpha_{2})\in \mathbb{N}^{1+2}$ we write $|\alpha|=2\alpha_{0}+ \alpha_{1}+\alpha_{2}$ and $\partial^{\alpha}=\partial_{t}^{\alpha_{0}}\partial^{\alpha_{1}}_{1}
\partial^{\alpha_{2}}_{2}$. For $\alpha \in \mathbb{N}^{1+2}$ we set
\begin{eqnarray}\label{15.1}
&& \overline{\mathcal{E}}_{\alpha} := \int_{\Omega} \frac{1}{2} |\partial^{\alpha} u|^{2} + \int_{\Sigma} (\frac{1}{2}
|\partial^{\alpha}\eta|^{2} + \frac{\sigma}{2} |D \partial^{\alpha} \eta|^{2}) + \int_{\Omega} \frac{1}{2} |\partial^{\alpha} b|^{2},  ~\nonumber~\\
&&\overline{\mathcal{D}}_{\alpha} := \int_{\Omega} \frac{1}{2} |\mathbb{D} \partial^{\alpha} u|^{2}  + \int_{\Omega} |\nabla \partial^{\alpha} b|^{2}.
\end{eqnarray}
We then define
\begin{equation}\label{15.2}
\overline{\mathcal{E}}:= \sum_{|\alpha|\leq 2} \overline{\mathcal{E}}_{\alpha}~~~and~~~\overline{\mathcal{D}}:= \sum_{|\alpha|\leq 2} \overline{\mathcal{D}}_{\alpha}.
\end{equation}
We will also need to use the functional
\begin{equation}\label{15.3}
\mathcal{F} :=\int_{\Omega} p F^{2} J.
\end{equation}

Our next result encodes the energy-dissipation inequality associated to $\overline{\mathcal{E}}$ and $\overline{\mathcal{D}}$.

\begin{lemm}\label{lemm4.3}
Suppose that $(u,b,p,\eta)$ solves \eqref{1.14}. Let $\mathcal{E}$ and $\mathcal{D}$ defined in \eqref{12.1} and \eqref{12.2}. Assume that $\mathcal{E}\leq \delta$, where $\delta\in(0,1)$ is the universal constant given in Lemma \ref{lemm4.1}.
Let $\overline{\mathcal{E}}$ and $\overline{\mathcal{D}}$ be given by \eqref{15.2} and $\mathcal{F}$ be given by \eqref{15.3}. Then
\begin{equation}\label{15.4}
\frac{d}{dt} (\overline{\mathcal{E}}-\mathcal{F}) + \overline{\mathcal{D}}\lesssim \sqrt{\mathcal{E}} \mathcal{D},
\end{equation}
for all $t\in[0,T]$.
\end{lemm}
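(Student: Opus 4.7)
The plan is to split the sum $\bar{\mathcal{E}} = \sum_{|\alpha|\leq 2}\bar{\mathcal{E}}_\alpha$ into the six multi-indices with $\alpha_0=0$ (purely horizontal derivatives up to second order) and the single multi-index $\alpha=(1,0,0)$ (one time derivative), and to use the two linearization lemmas accordingly.

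For each horizontal multi-index $\alpha=(0,\alpha_1,\alpha_2)$, the operator $\partial^\alpha$ commutes with the \emph{constant-coefficient} linear operators in the perturbed linear form \eqref{3.13} and, being tangential, preserves the boundary conditions on $\Sigma_{-1}$ and on $\Sigma$. Hence $(v,H,q,h)=(\partial^\alpha u,\partial^\alpha b,\partial^\alpha p,\partial^\alpha \eta)$ solves \eqref{3.14} with forcings $(\Phi^1,\ldots,\Phi^5)=(\partial^\alpha G^1,\ldots,\partial^\alpha G^5)$. Lemma \ref{lemm3.2} then gives exactly
\begin{equation*}
\frac{d}{dt}\bar{\mathcal{E}}_\alpha + \bar{\mathcal{D}}_\alpha \;=\; \text{pairings involving } \partial^\alpha G^i,
\end{equation*}
and each pairing is handled by Cauchy--Schwarz combined with Lemma \ref{lemm14.2}; for example $|\int_\Sigma(\partial^\alpha\eta-\sigma\Delta_\star\partial^\alpha\eta)\,\partial^\alpha G^5|\lesssim \|\eta\|_{7/2}\,\|G^5\|_{5/2}\lesssim\sqrt{\mathcal{E}}\sqrt{\mathcal{E}\mathcal{D}}$, yielding an $O(\sqrt{\mathcal{E}}\mathcal{D})$ bound on the right.

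For $\alpha=(1,0,0)$, differentiating \eqref{1.14} in $t$ identifies $(\partial_t u,\partial_t b,\partial_t p,\partial_t \eta)$ as a solution of the geometric system \eqref{3.1} with $F^i$ as in \eqref{13.4}--\eqref{13.9}, and Lemma \ref{lemm3.1} produces a $J$- and $\mathcal{A}$-weighted version of the sought identity. \emph{The principal obstacle here is the term} $\int_\Omega \partial_t p\cdot F^2 J$, which cannot be controlled directly since $\partial_t p\notin \mathcal{D}$. The standard remedy is to integrate by parts in time:
\begin{equation*}
\int_\Omega \partial_t p\cdot F^2J \;=\; \frac{d}{dt}\int_\Omega pF^2J \;-\; \int_\Omega p\,\partial_t(F^2 J) \;=\; \frac{d}{dt}\mathcal{F} \;-\; \int_\Omega p\,\partial_t(F^2J),
\end{equation*}
and to move $d\mathcal{F}/dt$ to the left of the identity so as to form $\frac{d}{dt}(\bar{\mathcal{E}}_{(1,0,0)}-\mathcal{F})$. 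The leftover volume integral is bounded by $\sqrt{\mathcal{E}}\mathcal{D}$ via the second estimate of Lemma \ref{lemm14.1}, and the remaining pairings $\int v\cdot F^1J$, $\int v\cdot F^3J$, $\int_\Sigma v\cdot F^4$, $\int_\Sigma(h-\sigma\Delta_\star h)F^5$ are each $\lesssim\sqrt{\mathcal{E}\mathcal{D}}\,\|(v,h)\|$ by the first estimate of Lemma \ref{lemm14.1}, again contributing $O(\sqrt{\mathcal{E}}\mathcal{D})$.

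It remains to reconcile the $J$- and $\mathcal{A}$-weighted quantities on the left of the Lemma \ref{lemm3.1} identity with the flat quantities defining $\bar{\mathcal{E}}_{(1,0,0)}$ and $\bar{\mathcal{D}}_{(1,0,0)}$. Writing $J=1+(J-1)$ and $\mathcal{A}=I+(\mathcal{A}-I)$, the discrepancy splits into an energy correction of the form $\int(|\partial_t u|^2+|\partial_t b|^2)\,\partial_t J$ (coming from differentiating the weight in time) and dissipation corrections involving $J-1$ and $\mathcal{A}-I$. By Lemma \ref{lemm4.1} the $L^\infty$ norms $\|J-1\|_{L^\infty}$ and $\|\mathcal{A}-I\|_{L^\infty}$ are at most $1/2$, so a fixed fraction of the flat dissipation is absorbed from the $\mathcal{A}$-weighted one; on the other hand, $\|\partial_t J\|_{L^\infty}\lesssim\sqrt{\mathcal{E}}$ by Lemma \ref{lemm4.2} and Sobolev embedding, so every correction is $\lesssim\sqrt{\mathcal{E}}\mathcal{D}$. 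Summing the seven contributions and collecting all error terms on the right yields \eqref{15.4}.
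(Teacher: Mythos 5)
Your proposal follows the same architecture as the paper's proof: split $|\alpha|\le 2$ into the purely horizontal multi-indices, treated via the perturbed linear form \eqref{3.13}, Lemma \ref{lemm3.2} and the estimates of Lemma \ref{lemm14.2}, versus $\alpha=(1,0,0)$, treated via the geometric form \eqref{3.1}, Lemma \ref{lemm3.1} and Lemma \ref{lemm14.1}, with the problematic term $\int_\Omega\partial_t p\,F^2J$ converted into $\frac{d}{dt}\mathcal{F}-\int_\Omega p\,\partial_t(F^2J)$ exactly as in the paper. However, two steps need repair. First, for $|\alpha|=2$, $\alpha_0=0$, plain Cauchy--Schwarz against Lemma \ref{lemm14.2} is not available: $\int_\Omega\partial^\alpha u\cdot\partial^\alpha G^1$ would require $\|G^1\|_2$, while \eqref{14.3} only controls $\|G^1\|_1$ (and similarly $G^3$, $G^4$ are only controlled in $H^1$, $H^{3/2}$). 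One must redistribute one derivative, either by writing $\partial^\alpha=\partial^{\beta+\omega}$ with $|\beta|=|\omega|=1$ and integrating by parts (as the paper does) or by an $H^{1/2}$--$H^{-1/2}$ duality on $\Sigma$ as in your $G^5$ example; this has to be said for $G^1,G^3,G^4$ as well. (Also, in that example $\|\eta\|_{7/2}$ is a dissipation quantity, not $\sqrt{\mathcal{E}}$; the bound still closes because $\mathcal{E}\lesssim\mathcal{D}$, but the attribution is off.)

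Second, and more seriously, your reconciliation of the $J$-, $\mathcal{A}$-weighted identity with the flat quantities of \eqref{15.1} fails as written. Converting $\frac{d}{dt}\int_\Omega\frac{|\partial_t u|^2}{2}J$ into $\frac{d}{dt}\int_\Omega\frac{|\partial_t u|^2}{2}$ produces not only the term $\int_\Omega\frac{|\partial_t u|^2}{2}\partial_tJ$ that you list, but also $\int_\Omega\partial_t u\cdot\partial_t^2u\,(J-1)$, and $\partial_t^2u$ is controlled by neither $\mathcal{E}$ nor $\mathcal{D}$ in \eqref{12.1}--\eqref{12.2}; so the ``energy correction'' cannot be reduced to the single $\partial_tJ$ contribution, and that step of the argument collapses. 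The standard fix---which is what the paper does implicitly here and explicitly in the $\sigma=0$ part through \eqref{5.1}---is to keep the weight inside the time derivative, i.e.\ take the temporal piece of the energy to be $\int_\Omega\frac{|\partial_t u|^2+|\partial_t b|^2}{2}J$; since $J$ is bounded above and below by positive constants under Lemma \ref{lemm4.1}, this is uniformly comparable to the flat energy, which is all that is used afterwards in the Gronwall argument, and only the dissipation needs converting, using the $L^\infty$ smallness of $\mathcal{A}-I$ and $J-1$ (plus Korn) to absorb errors of size $\sqrt{\mathcal{E}}\mathcal{D}$. Incidentally, $\|\partial_tJ\|_{L^\infty}$ is bounded via $\|\partial_t\eta\|_{5/2}\lesssim\sqrt{\mathcal{D}}$ rather than $\sqrt{\mathcal{E}}$ (Lemma \ref{lemm4.2} gives $L^2$ control of high time derivatives for the $\sigma=0$ functionals, not an $L^\infty$ bound from $\mathcal{E}$); that particular term is harmless either way since $\mathcal{E}\lesssim\mathcal{D}$, but the uncontrolled $\partial_t^2u$ term is not.
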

\begin{proof}
Let $\alpha\in \mathbb{N}^{1+2}$ with $|\alpha|\leq 2$. We apply $\partial^{\alpha}$ to \eqref{1.14} to derive an equation for $(\partial^{\alpha} u, \partial^{\alpha}b, \partial^{\alpha}\eta, \partial^{\alpha} p)$. We will consider the form of this equation in different ways depending on $\alpha$.

Suppose that $\alpha=(1,0,0)$, i.e. that $\partial^{\alpha}=\partial_{t}$. Then $v=\partial_t u$, $q=\partial_tp$, $H=\partial_tb$, $h=\partial_t\eta$ satisfying \eqref{3.1} with $F^{1},...,F^{5}$ defined in \eqref{13.4}-\eqref{13.9}. Then according to Lemma \ref{lemm3.1} and Lemma \ref{lemm14.1}, we deduce
\begin{eqnarray*}
&& \frac{d}{dt} (\int_{\Omega} \frac{|\partial_{t} u|^{2}}{2}J + \int_{\Omega} \frac{|\partial_{t} b|^{2}}{2}J+ \int_{\Sigma} \frac{|\partial_{t}\eta|^{2}}{2}+ \int_{\Sigma} \sigma \frac{|D \partial_{t} \eta|^{2}}{2} )+ \mu\int_{\Omega} \frac{|\mathbb{D}_{\mathcal{A}} \partial_{t} u|^{2}}{2}J+\kappa \int_{\Omega} |\nabla_{\mathcal{A}} \partial_{t} b|^{2}J\nonumber \\
&&~~~~~~~~= \int_{\Omega} (\partial_{t} u \cdot F^{1} + \partial_{t} p  F^{2}+ \partial_{t} b \cdot F^{3})J-\int_{\Sigma} \partial_{t} u \cdot F^{4} + \int_{\Sigma} (\partial_{t}\eta- \sigma \Delta_{\star} \partial_{t}\eta) F^{5}\nonumber\\
&&~~~~~~~~\lesssim (\|\partial_tu\|_{0}\|F^1\|_0+\|\partial_t b\|_0\|F^3\|_0)\|J\|_{L^\infty}+
\|+\|\partial_tu\|_{1/2}\|F^4\|_{-1/2}\\
&&~~~~~~~~~~~~+(\|\partial_t\eta\|_{1/2}+\sigma\|\partial_t\eta\|_{5/2})\|F^5\|_{-1/2}
+\int_\Omega\partial_{t} p  F^{2}J\nonumber\\
&&~~~~~~~~\lesssim\sqrt{\mathcal E}\mathcal D+\int_\Omega\partial_{t} p  F^{2}J.\nonumber
\end{eqnarray*}
Since there is no time derivation on $p$ in $\mathcal D$, for the term involving $\partial_tp$, we have
\begin{equation*}
\begin{aligned}
\int_{\Omega} \partial_{t} p F^{2} J =\frac{d}{dt} \int_{\Omega}pF^{2} J -\int_{\Omega} p\partial_{t} (F^{2} J).
\end{aligned}
\end{equation*}
Then, it follows \eqref{14.2} that
\begin{equation}\label{15.5}
\frac{d}{dt} (\overline{\mathcal{E}}_{(1,0,0)}-\mathcal{F})+ \overline{\mathcal{D}}_{(1,0,0)}\lesssim \sqrt{\mathcal{E}} \mathcal{D},
\end{equation}
where $\overline{\mathcal{E}}_{(1,0,0)}$ and $\overline{\mathcal{D}}_{(1,0,0)}$ are as defined in \eqref{15.1}.

Next, we consider $\alpha\in \mathbb{N}^{1+2}$ with $|\alpha|\leq2$ and $\alpha_{0}=0$, i.e. no temporal derivatives. In this case, we view $(u,b,p,\eta)$ in terms of \eqref{3.13}, which then means that $(v,H,q,h)=(\partial^{\alpha} u,\partial^{\alpha} b,\partial^{\alpha} p,\partial^{\alpha} \eta)$ satisfy \eqref{3.14} with $\Phi^{i}=\partial^{\alpha} G^{i}$ for $i=1,...,5$, where the nonlinearities $G^{i}$ are as defined in \eqref{3.11}-\eqref{3.12}. we may then apply Lemma 3.2 to see that for $|\alpha|\leq 2$ and $\alpha_{0}=0$ we have the identity
\begin{eqnarray}\label{15.6}
&&\frac{d}{dt} \overline{\mathcal{E}_{\alpha}} + \overline{\mathcal{D}_{\alpha}} = \int_{\Omega} (\partial^{\alpha} u \cdot \partial^{\alpha}( G^{1}-\nabla G^2) + \partial^{\alpha} P \partial^{\alpha} G^{2} +\partial^{\alpha} b \cdot \partial^{\alpha} G^{3}) ~\nonumber~\\
&&- \int_{\Sigma} \partial^{\alpha} u \cdot \partial^{\alpha} G^{4}+ \int_{\Sigma}\partial^{\alpha}\eta \partial^{\alpha} G^{5}- \sigma \int_{\Sigma}\partial^{\alpha} G^{5} \Delta_{\star} \partial^{\alpha} \eta.
\end{eqnarray}
When $|\alpha|=2$ and $\alpha_{0}=0$ we write $\partial^{\alpha}= \partial^{\beta+\omega}$ for $|\beta|=|\omega|=1$. We then integrate by parts in the $G^{1}, G^{5}$ terms in \eqref{15.6} to estimate
\begin{eqnarray*}
&&RHS~of~\eqref{15.6}= \int_{\Omega}(-\partial^{\alpha+\beta} u \cdot \partial^{\omega} (G^{1}-\nabla G^2) + \partial^{\alpha} p \partial^{\alpha} G^{2}+\partial^{\alpha+\beta} b \cdot \partial^{\omega} G^{3} ) ~\nonumber~\\
&&~~~~~~~~~~~~~~~~~~~~~-\int_{\Sigma} \partial^{\alpha} u \cdot \partial^{\alpha} G^{4}- \int_\Sigma\partial^{\omega}\eta \partial^{\alpha+ \beta} G^{5}+\sigma \int_\Sigma  \partial^{\alpha+\beta} G^{5} \Delta_{\star} \partial^{\omega} \eta,~\nonumber~\\
&&~~~~~~~~~~~~~~~~~~~\lesssim \|u\|_{3} (\|G^{1}\|_{1}+\|G^{2}\|_{2}) + \|p\|_{2} \|G^{2}\|_{2}+ \|b\|_{3} \|G^{3}\|_{1} + \|D^{2}u\|_{1/2} \|D^{2} G^{4}\|_{\-1/2}~\nonumber~\\
&&~~~~~~~~~~~~~~~~~~~~~ + \|D^{3} G^{5}\|_{-1/2}(\|D \eta\|_{1/2}+\|D^{3}\eta \|_{1/2} )~\nonumber~\\
&&~~~~~~~~~~~~~~~~~~~\lesssim \sqrt{\mathcal{D}}(\|G^{1}\|_{1} +\|G^{2}\|_{2}+ \|G^{3} \|_{1}+ \|G^{4}\|_{3/2}+\|G^{5}\|_{5/2}).
\end{eqnarray*}
The estimate \eqref{14.3} of Lemma \ref{lemm14.2} then tells us that
\begin{equation*}
RHS~of~\eqref{15.6}\lesssim \sqrt{\mathcal{E}} \mathcal{D},
\end{equation*}
and so we have the inequality
\begin{equation}\label{15.7}
\frac{d}{dt} \sum_{|\alpha|=2,\alpha_{0}=0}\overline{\mathcal{E}}_{\alpha}+ \sum_{|\alpha|=2,\alpha_{0}=0}\overline{\mathcal{D}}_{\alpha}\lesssim \sqrt{\mathcal{E}} \mathcal{D}.
\end{equation}

On the other hand, if $|\alpha|<2$ then we must have that $\alpha_{0}=0$, and we can directly apply Lemma \eqref{lemm14.2} to see that
\begin{equation}\label{15.8}
\frac{d}{dt} \sum_{|\alpha|<2,\alpha_{0}=0}\overline{\mathcal{E}}_{\alpha}+ \sum_{|\alpha|<2,\alpha_{0}=0}\overline{\mathcal{D}}_{\alpha}\lesssim \sqrt{\mathcal{E}} \mathcal{D}.
\end{equation}
Now, to deduce \eqref{15.4} we simply sum \eqref{15.5},\eqref{15.6}, and \eqref{15.8}.
\end{proof}
\subsubsection{Enhanced energy estimates}
From the energy-dissipative estimates of Lemma 4.3 we have controlled $\overline{\mathcal{E}}$ and $\overline{\mathcal{D}}$. Our goal now is to show that these can be used to control $\mathcal{E}$ and $\mathcal{D}$ up to some error terms which we will be able to guarantee are small. Here we firstly focus on the estimates for the energies $\mathcal{E}$.

\begin{lemm}\label{lemm4.4}
Let $\mathcal{E}$ be as defined in \eqref{12.1}. Suppose that $\mathcal{E}\leq \delta$, where $\delta\in(0,1)$ is the universal constant given in Lemma \ref{lemm4.1}. Then, we obtain
\begin{equation} \label{15.9}
\mathcal{E} \lesssim \overline{\mathcal{E}}+\mathcal{E}^{2}.
\end{equation}
\end{lemm}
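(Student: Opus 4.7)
The plan is to recover each term of $\mathcal{E}$ from quantities already contained in $\overline{\mathcal{E}}$, paying for this with nonlinear correction terms that are $O(\mathcal{E}^2)$ thanks to Lemma \ref{lemm14.2}. The key observation is that, because $\sigma>0$ is fixed, the horizontal energy $\sigma\|D\partial^\alpha\eta\|_0^2$ with $|\alpha|=2$, $\alpha_0=0$ already controls $\|D^3\eta\|_0^2$, so by Poincar\'e's inequality (which applies thanks to the zero-average condition \eqref{1.3}) we obtain $\|\eta\|_3^2\lesssim\overline{\mathcal{E}}$ at no cost. Similarly, $\|\partial_t u\|_0^2$, $\|\partial_t b\|_0^2$, and $\sigma\|D\partial_t\eta\|_0^2$ are all directly in $\overline{\mathcal{E}}$.

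The next step is to recover the full Sobolev norms $\|u\|_2^2+\|p\|_1^2$ and $\|b\|_2^2$ via elliptic regularity applied to the perturbed linear form \eqref{3.13}. Viewing the first, second, fourth and sixth lines of \eqref{3.13} as a stationary Stokes problem with forcing $G^1-\partial_t u$, divergence data $G^2$, and boundary stress $(\eta-\sigma\Delta_\star\eta)e_3+G^4$, standard Stokes regularity yields
\begin{equation*}
\|u\|_2+\|p\|_1\lesssim \|\partial_t u\|_0+\|G^1\|_0+\|G^2\|_1+\|\eta-\sigma\Delta_\star\eta\|_{1/2}+\|G^4\|_{1/2}.
\end{equation*}
The term $\|\eta-\sigma\Delta_\star\eta\|_{1/2}\lesssim \|\eta\|_{5/2}$ is controlled by the $\sigma$-weighted part of $\overline{\mathcal{E}}$, and the $G^i$ contributions are $\lesssim \mathcal{E}$ by \eqref{14.4}. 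An analogous elliptic estimate for the heat-type equation $-\Delta b=G^3-\partial_t b$ with Dirichlet data $b=0$ on $\Sigma\cup\Sigma_{-1}$ gives $\|b\|_2\lesssim \|\partial_t b\|_0+\|G^3\|_0$, and again the nonlinear term is absorbed by $\mathcal{E}$.

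The remaining temporal surface quantities are read off from the kinematic boundary condition $\partial_t\eta=u_3+G^5$ on $\Sigma$ and its time derivative. Trace theory gives $\|u_3\|_{H^{3/2}(\Sigma)}\lesssim \|u\|_2$, whence
\begin{equation*}
\|\partial_t\eta\|_{3/2}^2\lesssim \|u\|_2^2+\|G^5\|_{3/2}^2\lesssim \overline{\mathcal{E}}+\mathcal{E}^2,
\end{equation*}
using the Stokes bound just established and \eqref{14.3}. Differentiating the kinematic equation in $t$ gives $\partial_t^2\eta=\partial_t u_3+\partial_t G^5$; testing against $H^{1/2}(\Sigma)$ functions and pairing with an extension into $\Omega$ (where one controls $\partial_3\partial_t u_3$ through the divergence relation $\partial_3 u_3=G^2-\partial_1 u_1-\partial_2 u_2$) produces the $H^{-1/2}(\Sigma)$ estimate $\|\partial_t^2\eta\|_{-1/2}^2\lesssim\|\partial_t u\|_0^2+\|\partial_t G^5\|_{-1/2}^2\lesssim \overline{\mathcal{E}}+\mathcal{E}^2$.

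Summing the contributions for $u$, $b$, $p$, $\eta$, $\partial_t u$, $\partial_t b$, $\partial_t\eta$ and $\partial_t^2\eta$ yields $\mathcal{E}\lesssim\overline{\mathcal{E}}+\mathcal{E}^2$, as claimed. The main technical obstacle is the Stokes estimate, because one must verify that the capillary boundary term $(\eta-\sigma\Delta_\star\eta)e_3$ can legitimately be handled as Neumann-type data at the $H^{1/2}(\Sigma)$ level and that this level is actually supplied (with the factor $\sigma$) by $\overline{\mathcal{E}}$; once this is in place, the remaining elliptic and trace arguments are standard, and the nonlinear terms $G^i$, $F^5$ are quadratic and small by Lemma \ref{lemm14.2} and Lemma \ref{lemm14.1}.
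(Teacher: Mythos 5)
Your proposal is correct and takes essentially the same approach as the paper: Stokes regularity for $(u,p)$ from the perturbed linear form with the capillary boundary data $(\eta-\sigma\Delta_\star\eta)e_3$ absorbed by the $\sigma$-weighted surface terms of $\overline{\mathcal{E}}$, a Dirichlet elliptic estimate for $b$, the kinematic boundary condition plus trace theory for $\partial_t\eta$, and the duality/extension argument with the divergence relation for $\|\partial_t^2\eta\|_{-1/2}$. Two bookkeeping remarks: the bound $\|G^5\|_{3/2}\lesssim\mathcal{E}$ should be cited from \eqref{14.4} rather than \eqref{14.3} (the latter involves $\mathcal{D}$), and the $H^{-1/2}$ estimate for $\partial_t^2\eta$ should also carry the term $\|\partial_t G^2\|_{-1}$ coming from the divergence relation, exactly as in the paper's proof.
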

\begin{proof}
According to the definitions of $\overline{\mathcal{E}}$ and $\mathcal{E}$, in order to prove \eqref{15.9} it suffices to prove that
\begin{equation}\label{15.10}
\|u\|_{2}^{2}+ \|p\|_{1}^{2}+\|b\|_{2}^{2}+ \|\partial_{t} \eta\|_{3/2}^{2}+ \|\partial^{2}_{t} \eta\|_{-1/2}^{2}\lesssim \overline{\mathcal{E}}+ \mathcal{E}^{2}.
\end{equation}
For estimating $u$ and $p$ we apply the standard Stokes estimates. Now, according to \eqref{3.13} we have that
\begin{equation}\label{15.21}
\left\{
\begin{aligned}
&-\Delta u+ \nabla p=-\partial_{t} u+ G^{1}~~~~~~~~~~~~~~~~~~~~{\rm in}~\Omega  \\
&{\rm div} v=G^{2}~~~~~~~~~~~~~~~~~~~~~~~~~~~~~~~~~~~~~~~~{\rm in}~\Omega  \\
&(pI-\mathbb{D} u)e_{3}=(\eta I+ \sigma \Delta_{\star} \eta)e_{3} + G^{4}~~~~~~~~{\rm on}~\Sigma\\
&u=0,~~~~~~~~~~~~~~~~~~~~~~~~~~~~~~~~~~~~~~~~~~~~~{\rm on}~\Sigma_{-1},
\end{aligned}
\right.
\end{equation}
and hence we may apply Lemma \ref{lemmA.2} and the estimate \eqref{14.4} of Lemma \ref{lemm4.2} to see that
\begin{equation}
\begin{aligned}
\|u\|_{2}+\|P\|_{1}&\lesssim \|\partial_{t} u\|_{0}+ \|G^{1}\|_{0}+ \|G^{2}\|_{1}+ \|(\eta I+ \sigma \Delta_{\star} \eta)e_{3}\|_{1/2} + \|G^{4}\|_{1/2},\\
&\lesssim \sqrt{\overline{\mathcal{E}}} +\|G^{1}\|_{0}+ \|G^{2}\|_{1}+ \|G^{4}\|_{1/2},\\
&\lesssim \sqrt{\overline{\mathcal{E}}}+ \mathcal{E}.
\end{aligned}
\end{equation}
From this we deduce that the $u,p$ estimates in \eqref{15.10} hold.

Similarly, for estimating $b$, we have
\begin{equation}\label{15.22}
\left\{
\begin{aligned}
&-\Delta b=-\partial_{t}b+ G^{3},~~~~~~~~~~~~~~~~{\rm in}~\Omega \\
&b=0,~~~~~~~~~~~~~~~~~~~~~~~~~~~~~~~~~~{\rm on}~\Sigma \\
&b=0,~~~~~~~~~~~~~~~~~~~~~~~~~~~~~~~~~~{\rm on}~\Sigma_{-1}.
\end{aligned}
\right.
\end{equation}
It follows from Lemma \ref{lemmA.1} that
\begin{eqnarray*}
\|b\|_{2}\lesssim \|\partial_{t} b\|_{0}+ \|G^{3}\|_{0}\lesssim \sqrt{\overline{\mathcal{E}}}+ \mathcal{E}.
\end{eqnarray*}
To estimate the $\partial_{t}\eta$ term in \eqref{15.10} we use the fifth equation of \eqref{3.13} in conjunction with the estimate \eqref{14.4} of Lemma \ref{lemm4.2} and the usual trace estimates to see that
\begin{equation*}
\|\partial_{t} \eta\|_{3/2} \lesssim \|u_{3}\|_{3/2}+ \|G^{5}\|_{3/2}
\lesssim \|u\|_{2}+ \mathcal{E}\lesssim \sqrt{\overline{\mathcal{E}}}+ \mathcal{E}.
\end{equation*}
From this we deduce that the $\partial_{t}\eta$ estimate in \eqref{15.10} holds.

It remains to estimate the $\partial_{t}^{2}\eta$ term in \eqref{15.10}. We apply a temporal derivative to the fifth equation of \eqref{3.13} and integrate against a function $\phi\in H^{\frac{1}{2}}(\Sigma)$ to see that
\begin{equation*}
\int_{\Sigma} \partial^{2}_{t}\eta \phi = \int_{\Sigma} \partial_{t} u_{3}\phi + \int_{\Sigma}\partial_{t} G^{5} \phi .
\end{equation*}
Choose an extension $E\phi\in H^{1}(\Omega)$ with $E\phi|_{\Sigma}=\phi$, $E\phi|_{\Sigma_{-1}}=\phi$, and
$\|E\phi\|_{1}\lesssim \|\phi\|_{1/2}$. Then
\begin{equation*}
\int_{\Sigma} \partial_{t}u_{3} \phi = \int_{\Omega} \partial_{t} u\cdot \nabla_{x}E \phi+ \int_{\Omega}\partial_{t} G^{2} E \phi \leq (\|\partial_{t} u\|_{0}+ \|\partial_{t}G^{2}\|_{-1}) \|\phi\|_{1/2},
\end{equation*}
and Lemma \ref{lemm14.2} implies that
\begin{equation*}
\|\partial^{2}_{t} \eta\|_{-1/2} \lesssim \|\partial_{t}u\|_{0}+ \|\partial_{t} G^{2}\|_{-1}+ \|\partial_{t} G^{5}\|_{-1/2} \lesssim \sqrt{\overline{\mathcal{E}}}+ \mathcal{E}.
\end{equation*}
Then, we complete estimates in \eqref{15.10}.
\end{proof}
\subsubsection{Enhanced dissipate estimates.}
We now show a corresponding result for the dissipation.

\begin{lemm}\label{lemm4.5}
Let $\mathcal{E}$  and $\mathcal{D}$ be as defined in \eqref{12.1} and \eqref{12.2}. Suppose that $\mathcal{E}\leq \delta$, where $\delta\in(0,1)$ is the universal constant given in Lemma \ref{lemm4.1}. Then, we deduce
\begin{equation} \label{15.12}
\mathcal{D} \lesssim \overline{\mathcal{D}}+\mathcal{E}\mathcal{D}.
\end{equation}
\end{lemm}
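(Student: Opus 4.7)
The proof will mirror the structure of Lemma \ref{lemm4.4}, but bootstrapped one derivative higher in regularity. The input $\overline{\mathcal{D}}$ controls $\|\partial_t u\|_1^2$ and $\|\partial_t b\|_1^2$ via Korn's inequality, since $u$ and $b$ vanish on $\Sigma_{-1}$ and $\overline{\mathcal{D}}$ contains $\|\mathbb{D}\partial_t u\|_0^2$ and $\|\nabla\partial_t b\|_0^2$. The plan is to run each component of $\mathcal{D}$ through an elliptic estimate for the perturbed linear form \eqref{3.13} and close up using the nonlinear bounds from Lemma \ref{lemm14.2}.

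The principal step is the capillary Stokes estimate. Viewing the first, second, fourth and fifth equations of \eqref{3.13} as a stationary surface-tension Stokes problem for $(u,p,\eta)$ with forcing $-\partial_t u + G^1$, divergence $G^2$, Neumann data $(\eta - \sigma\Delta_\star\eta)e_3 + G^4$, and zero bottom datum, I would apply Lemma A.2 at the $k=1$ level (the same lemma invoked at $k=0$ in the proof of Lemma \ref{lemm4.4}) to obtain
\begin{equation*}
\|u\|_3 + \|p\|_2 + \|\eta\|_{7/2} \lesssim \|\partial_t u\|_1 + \|G^1\|_1 + \|G^2\|_2 + \|G^4\|_{3/2}.
\end{equation*}
The key point here is that the top-order $\eta$ regularity is picked up from the ellipticity of $I - \sigma\Delta_\star$ acting on the flat boundary $\Sigma$; this is precisely where the hypothesis $\sigma>0$ is used, and it bypasses the circularity that would arise if one tried to feed $\|\eta\|_{7/2}$ into the Stokes estimate purely as boundary data. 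Then $\|\partial_t u\|_1 \lesssim \sqrt{\overline{\mathcal{D}}}$ and Lemma \ref{lemm14.2} bounds the $G^i$-norms by $\sqrt{\mathcal{E}\mathcal{D}}$.

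For the remaining pieces of $\mathcal{D}$: the magnetic field $b$ satisfies the homogeneous Dirichlet problem $-\Delta b = -\partial_t b + G^3$ on $\Omega$ with $b=0$ on both boundaries, so Lemma \ref{lemmA.1} gives $\|b\|_3 \lesssim \|\partial_t b\|_1 + \|G^3\|_1 \lesssim \sqrt{\overline{\mathcal{D}}} + \sqrt{\mathcal{E}\mathcal{D}}$. The kinematic condition $\partial_t\eta = u_3 + G^5$ combined with the trace theorem gives $\|\partial_t\eta\|_{5/2} \lesssim \|u\|_3 + \|G^5\|_{5/2}$. Differentiating once in time and testing against $H^{-1/2}(\Sigma)$ (as in the proof of Lemma \ref{lemm4.4} for $\|\partial_t^2\eta\|_{-1/2}$, but now one derivative higher) yields $\|\partial_t^2\eta\|_{1/2} \lesssim \|\partial_t u\|_1 + \|\partial_t G^5\|_{1/2}$. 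All $G^i$-contributions are bounded by $\sqrt{\mathcal{E}\mathcal{D}}$ via \eqref{14.3}.

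Squaring and summing the resulting inequalities, every linear term is absorbed into $\overline{\mathcal{D}}$ and every nonlinear term is $\lesssim \mathcal{E}\mathcal{D}$, producing \eqref{15.12}. The one genuine obstacle is the first step: disentangling the coupled $(u,p,\eta)$ system with surface tension so that $\|\eta\|_{7/2}$ is delivered simultaneously with $\|u\|_3 + \|p\|_2$, rather than being required a priori as boundary data — this is precisely what the capillary version of the Stokes estimate provides, and without it one cannot close the regularity count in $\mathcal{D}$.
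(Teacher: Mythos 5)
Your treatment of $b$, $\partial_t\eta$, and $\partial_t^2\eta$ matches the paper's proof, but your principal step does not go through as written. Lemma \ref{lemmA.2} is a Stokes estimate with \emph{prescribed} stress data: applied at $r=3$ to \eqref{15.21} it puts $\|(\eta-\sigma\Delta_\star\eta)e_3\|_{3/2}\sim\|\eta\|_{7/2}$ on the right-hand side, and, unlike the energy case (where $\overline{\mathcal{E}}$ contains $\|\eta\|_3^2$), the dissipation $\overline{\mathcal{D}}$ defined through \eqref{15.1} contains no $\eta$ terms at all --- this is exactly the circularity you acknowledge. The ``capillary Stokes estimate'' you invoke to bypass it, namely $\|u\|_3+\|p\|_2+\|\eta\|_{7/2}\lesssim\|\partial_t u\|_1+\|G^1\|_1+\|G^2\|_2+\|G^4\|_{3/2}$, is not Lemma \ref{lemmA.2}, is nowhere proved, and is in fact false as a standalone elliptic estimate: for an arbitrary smooth, zero-average, time-independent $\eta$ one may solve the stationary Stokes problem with stress data $(\eta-\sigma\Delta_\star\eta)e_3$, zero interior force, zero divergence and zero bottom datum (the kinematic condition can then be satisfied by an appropriate $G^5$, which does not appear on your right-hand side); this produces a nontrivial $(u,p,\eta)$ with all the data in your inequality equal to zero. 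The stationary system without the kinematic condition simply does not determine $\eta$, so no elliptic lemma of the asserted form can deliver $\|\eta\|_{7/2}$ for free.

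The paper closes this step differently, using the observation emphasized in its introduction: $\overline{\mathcal{D}}$ controls $\|u\|_1+\|Du\|_1+\|D^2u\|_1$ (Korn's inequality applied to the horizontal derivatives), hence by trace theory $\|u\|_{H^{5/2}(\Sigma)}\lesssim\sqrt{\overline{\mathcal{D}}}$. One then applies the Stokes estimate with \emph{Dirichlet} boundary data, Lemma \ref{A.3}, with $f_1=u|_\Sigma$ and $f_2=0$, obtaining $\|u\|_3+\|\nabla p\|_1\lesssim\sqrt{\overline{\mathcal{D}}}+\sqrt{\mathcal{E}\mathcal{D}}$ without ever touching $\eta$. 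Only afterwards is $\eta$ recovered: applying a horizontal derivative $\partial^\alpha$, $|\alpha|=1$, to the normal stress condition gives $(1-\sigma\Delta_\star)\partial^\alpha\eta=\partial^\alpha p-\partial_3\partial^\alpha u_3-\partial^\alpha G^4_3$, and the ellipticity of $1-\sigma\Delta_\star$ together with traces of the already-controlled $u$ and $\nabla p$ yields $\|D\eta\|_{5/2}$, whence $\|\eta\|_{7/2}$ by the zero-average Poincar\'e inequality; finally Lemma \ref{lemmA.2} is applied with the now-controlled boundary data to upgrade $\|\nabla p\|_1$ to $\|p\|_2$. To repair your argument you should replace the asserted capillary estimate by this three-stage scheme (or prove a genuinely coupled elliptic estimate that also incorporates the kinematic condition and the corresponding $\partial_t\eta$ data).
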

\begin{proof}
For the dissipation estimates of $u$, we apply the Lemma \ref{A.3} to \eqref{15.21} with $r=3$ and $\phi=-\partial_{t} u+ G^{1}$, $\psi=G^{2}$, $f_{1}= u|_{\Sigma}$, and $f_{2}=0$ and deduce
\begin{equation} \label{15.15}
\|u\|_{3}+ \|\nabla p\|_{1} \lesssim \|-\partial_{t}u+ G^{1}\|_{1}+ \|G^{2}\|_{2}+ \|u\|_{5/2}.
 \end{equation}
 We know that
\begin{equation*}
\|u\|_{1}+ \|Du\|_{1}+ \|D^{2} u\|_{1} \lesssim \sqrt{\overline{\mathcal{D}}},
\end{equation*}
and so trace theory provides us with the estimate
\begin{equation*}
\|u\|_{5/2}\lesssim \sqrt{\overline{\mathcal{D}}}.
\end{equation*}
We also have that $\|\partial_{t} u\|_{1} \lesssim \sqrt{\overline{\mathcal{D}}}$, and Lemma \ref{lemm14.2} tells that
\begin{equation*}
\|G^{1}\|_{1}+ \|G^{2}\|_{2} \lesssim \sqrt{\mathcal{E}\mathcal{D}}.
\end{equation*}
Then, we bring the above estimates into \eqref{15.15} to complete the dissipation estimates of $u$, that is,
\begin{equation}\label{15.23}
  \|u\|_{3}+ \|\nabla p\|_{1} \lesssim \sqrt{\overline{\mathcal{D}}}+ \sqrt{\mathcal{E} \mathcal{D}}.
\end{equation}

For the $b$ dissipative estimate, we directly apply the elliptic estimates to \eqref{15.22} to know
\begin{eqnarray} \label{15.13}
\|b\|_{3} \lesssim \|\partial_{t} b\|_{1}+\|G^{3}\|_{1}\lesssim \sqrt{\overline{\mathcal{D}}} +\|G^{3}\|_{1}\lesssim \sqrt{\overline{\mathcal{D}}}+ \sqrt{\mathcal{E}\mathcal{D}}.
\end{eqnarray}

We now turn to the $\eta$ estimates. For $\alpha \in \mathbb{N}^2$ and $|\alpha|=1$, we apply $\partial^\alpha$ to the fourth equation for \eqref{3.13} to obtain
\begin{equation}
(1-\sigma\Delta_\star)\partial^\alpha\eta=\partial^\alpha p-\partial_3\partial^\alpha u_3-\partial^\alpha G^4_3.
\end{equation}
Then the elliptic estimates, the trace estimates and \eqref{15.23} imply that
\begin{equation}\label{15.24}
\begin{aligned}
\|D\eta\|_{5/2}=\sum _{|\alpha|=1}\|\partial^\alpha\eta\|_{5/2}\lesssim& \sum _{|\alpha|=1}\|\partial^\alpha p-\partial_3\partial^\alpha u_3-\partial^\alpha G^4_3\|\\
\lesssim &\|\nabla p\|_1+\|u\|_3+\|G^4\|_{3/2}
\lesssim \sqrt{\overline{\mathcal{D}}}+ \sqrt{\mathcal{E}\mathcal{D}}.
\end{aligned}
\end{equation}
According to the zero average condition for $\eta$ and by using the Poincar${\rm\acute{e}}$ inequality, we deduce
\begin{equation}\label{15.25}
\|\eta\|_0\leq\|D\eta\|_{0},
\end{equation}
then, \eqref{15.24} and \eqref{15.25} reveal that
\begin{equation} \label{15.16}
\|\eta\|_{7/2} \lesssim \|\eta\|_{0}+ \|D\eta\|_{5/2} \lesssim \|D \eta\|_{5/2} \lesssim \sqrt{\overline{\mathcal{D}}}+ \sqrt{\mathcal{E}\mathcal{D}}.
\end{equation}
For the $\partial_t\eta$ estimates, we use the fifth equation of \eqref{3.13}, \eqref{14.3} and \eqref{5.23} to know
\begin{equation}\label{15.17}
\|\partial_{t} \eta\|_{5/2} \lesssim \|u_{3}\|_{5/2}+ \|G^{5}\|_{5/2} \lesssim \|u\|_{3}+ \|G^{5}\|_{5/2} \lesssim \sqrt{\overline{\mathcal{D}}}+ \sqrt{\mathcal{E}\mathcal{D}},
\end{equation}
and
\begin{equation}\label{15.18}
\|\partial_{t}^{2} \eta\|_{1/2} \lesssim \|\partial_{t}u_{3}\|_{1/2}+ \|\partial_{t}G^{5}\|_{1/2} \lesssim \|\partial_{t}u\|_{1}+ \|\partial_{t}G^{5}\|_{1/2} \lesssim \sqrt{\overline{\mathcal{D}}}+ \sqrt{\mathcal{E}\mathcal{D}}.
\end{equation}

Now we complete the estimate of the pressure by obtaining a bound for $\|p\|_{0}$. To this end we combine the estimates \eqref{15.15} and \eqref{15.16} with the Stokes estimate Lemma \ref{lemmA.2} with $\phi=-\partial_{t} u+ G^{1},\psi=G^{2}$, and $\alpha= (\eta I- \sigma \Delta_{\star} \eta)e_{3} +G^{3} e_{3}$ to bound
\begin{eqnarray*}
&&\|u\|_{3}+ \|P\|_{2}\lesssim \|-\partial_{t} u+ G^{1}\|_{1}+ \|G^{2}\|_{2}+ \|(\eta I-\sigma \delta_{\star}\eta)e_{3}\|_{3/2} ,~\nonumber~\\
&&~~~~~~~~~~~~~~~~\lesssim \|\partial_{t}u\|_{1}+ \|G^{1}\|_{1}+ \|G^{2}\|_{2}+ \|\eta\|_{7/2} ,~\nonumber~\\
&&~~~~~~~~~~~~~~~~\lesssim \sqrt{\overline{\mathcal{D}}}+ \sqrt{\mathcal{E}\mathcal{D}}.
\end{eqnarray*}
Thus,
\begin{equation}\label{15.19}
\|P\|_{2} \lesssim \sqrt{\overline{\mathcal{D}}}+ + \sqrt{\mathcal{E}\mathcal{D}}.
\end{equation}
Finally, \eqref{15.12} follows from \eqref{15.23},\eqref{15.13},\eqref{15.16},\eqref{15.17},\eqref{15.18} and \eqref{15.19}.
\end{proof}

\subsection{Proof of Theorem 2.1}
We now combine the estimates of the previous section in order to deduce our primary a priori estimates for solutions. It shows that under a smallness condition on the energy, the energy decays exponentially and the dissipation integral is bounded by the initial data.
\begin{theo}
Suppose that $(u,b,p,\eta)$ solves \eqref{1.14} on the temporal interval [0,T]. Let $\mathcal{E}$ and $\mathcal{D}$ be as defined in \eqref{12.1} and \eqref{12.2}. Then there exists universal constant $0<\delta_{\star}<\delta$, where $\delta\in(0,1)$ is the universal constant given in Lemma \ref{lemm4.1}, such that if
\begin{equation*}
\sup_{0\leq t\leq T} \mathcal{E}(t) \leq \delta_{\star},
\end{equation*}
then
\begin{equation}\label{16.1}
\sup_{0\leq t\leq T} e^{\lambda t} \mathcal{E}(t)  + \int_{0}^{T} \mathcal{D}(t)dt \lesssim \mathcal{E}(0),
\end{equation}
for all $t\in [0,T]$, where $\lambda>0$ is a universal constant.
\end{theo}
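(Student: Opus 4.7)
The plan is to chain together Lemma 4.3 (the energy-dissipation inequality for $\overline{\mathcal{E}}-\mathcal{F}$), Lemma 4.4 (control of $\mathcal{E}$ by $\overline{\mathcal{E}}$), Lemma 4.5 (control of $\mathcal{D}$ by $\overline{\mathcal{D}}$), and Lemma 3.5--type bounds on $\mathcal{F}$ to produce a coercive differential inequality of the form
\[
\frac{d}{dt}\bigl(\overline{\mathcal{E}}-\mathcal{F}\bigr) + \lambda\bigl(\overline{\mathcal{E}}-\mathcal{F}\bigr) \le 0,
\]
from which exponential decay of $\mathcal{E}$ follows by Gronwall, while the bound on $\int_0^T \mathcal{D}$ comes from integrating the differential inequality before applying Gronwall.

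\textbf{Step 1 (Equivalence of the energies).}
First I will choose $\delta_\star$ small enough that the conclusions of Lemmas 4.4 and 4.5 can be absorbed. From $\mathcal{E} \lesssim \overline{\mathcal{E}}+\mathcal{E}^2$ and $\mathcal{E}\le\delta_\star\ll 1$ I obtain $\mathcal{E}\lesssim\overline{\mathcal{E}}$, and similarly $\mathcal{D}\lesssim\overline{\mathcal{D}}$ from $\mathcal{D}\lesssim\overline{\mathcal{D}}+\mathcal{E}\mathcal{D}$. The reverse inequalities $\overline{\mathcal{E}}\lesssim\mathcal{E}$ and $\overline{\mathcal{D}}\lesssim\mathcal{D}$ hold trivially by comparing the norm indices in \eqref{12.1}--\eqref{12.2} against \eqref{15.1}--\eqref{15.2}. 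In addition, from \eqref{14.2} the correction term satisfies $|\mathcal{F}|\lesssim\mathcal{E}^{3/2}\le\sqrt{\delta_\star}\,\mathcal{E}$, so for $\delta_\star$ small enough,
\[
\tfrac{1}{2}\overline{\mathcal{E}}\ \le\ \overline{\mathcal{E}}-\mathcal{F}\ \le\ 2\overline{\mathcal{E}},
\qquad \mathcal{E}\ \simeq\ \overline{\mathcal{E}}\ \simeq\ \overline{\mathcal{E}}-\mathcal{F}.
\]

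\textbf{Step 2 (Coercivity: $\overline{\mathcal{E}}\lesssim\overline{\mathcal{D}}$).}
A direct term-by-term inspection of \eqref{12.1} and \eqref{12.2} gives $\mathcal{E}\lesssim\mathcal{D}$, because every norm appearing in $\mathcal{E}$ is dominated by the corresponding one in $\mathcal{D}$ (each has an extra unit of regularity). Combined with Step 1 this yields the key coercive relation
\[
\overline{\mathcal{E}}\ \lesssim\ \mathcal{E}\ \lesssim\ \mathcal{D}\ \lesssim\ \overline{\mathcal{D}}.
\]
This is really the reason exponential decay is available in the $\sigma>0$ regime: the presence of surface tension supplies the extra derivative in $\|\eta\|_3^2$ (in $\mathcal{E}$) from $\|\eta\|_{7/2}^2$ (in $\mathcal{D}$), so no fractional loss in $\eta$ needs to be tracked, in contrast with the $\sigma=0$ case.

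\textbf{Step 3 (Differential inequality and conclusion).}
By Lemma 4.3, $\frac{d}{dt}(\overline{\mathcal{E}}-\mathcal{F})+\overline{\mathcal{D}}\lesssim\sqrt{\mathcal{E}}\,\mathcal{D}\lesssim\sqrt{\delta_\star}\,\overline{\mathcal{D}}$, so choosing $\delta_\star$ small enough absorbs the right-hand side to obtain
\[
\frac{d}{dt}(\overline{\mathcal{E}}-\mathcal{F}) + \tfrac{1}{2}\overline{\mathcal{D}}\ \le\ 0.
\]
Invoking Step 2 together with the equivalence $\overline{\mathcal{E}}-\mathcal{F}\simeq\overline{\mathcal{E}}$ gives $\overline{\mathcal{D}}\ge 2\lambda(\overline{\mathcal{E}}-\mathcal{F})$ for some universal $\lambda>0$; Gronwall then yields $(\overline{\mathcal{E}}-\mathcal{F})(t)\le(\overline{\mathcal{E}}-\mathcal{F})(0)\,e^{-\lambda t}$, and converting back via Step 1 gives the pointwise decay $\mathcal{E}(t)\lesssim\mathcal{E}(0)\,e^{-\lambda t}$. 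Integrating the differential inequality directly from $0$ to $T$ before applying Gronwall produces $\int_0^T\overline{\mathcal{D}}\,dt\lesssim\overline{\mathcal{E}}(0)-\mathcal{F}(0)\lesssim\mathcal{E}(0)$, and hence $\int_0^T\mathcal{D}\,dt\lesssim\mathcal{E}(0)$ by Step 1. Multiplying the decay estimate by $e^{\lambda t}$ and adding yields \eqref{16.1}.

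\textbf{Main obstacle.}
The only delicate point is Step 2, i.e.\ verifying that the small ``hat'' quantities $\overline{\mathcal{E}},\overline{\mathcal{D}}$ (which involve only $u$, $b$, $\eta$ with at most horizontal derivatives and $\mathbb{D}u$, $\nabla b$ in the dissipation) do control the full norms in $\mathcal{E},\mathcal{D}$. This control is precisely what Lemmas 4.4--4.5 give, so the heavy lifting is already encoded there; the proof of Theorem 4.6 itself then amounts to the combinatorics described above together with the smallness absorption.
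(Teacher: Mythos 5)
Your proposal is correct and follows essentially the same route as the paper's proof: combine the energy--dissipation inequality of Lemma \ref{lemm4.3} with the equivalences $\mathcal{E}\simeq\overline{\mathcal{E}}$, $\mathcal{D}\simeq\overline{\mathcal{D}}$ from Lemmas \ref{lemm4.4}--\ref{lemm4.5}, use the elementary bound $\mathcal{E}\lesssim\mathcal{D}$ for coercivity, then Gronwall for the exponential decay and a direct time integration for the dissipation bound. If anything, your treatment of the correction $\mathcal{F}$ is slightly more careful than the paper's: you keep $\overline{\mathcal{E}}-\mathcal{F}$ as the Gronwall quantity and only convert back to $\mathcal{E}$ at the end via $|\mathcal{F}|\lesssim\sqrt{\delta_\star}\,\mathcal{E}$, whereas the paper discards $\mathcal{F}$ inside the time derivative citing only $|\mathcal{F}|\lesssim\mathcal{E}^{3/2}$.
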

\begin{proof}
According to the definition of $\bar{ \mathcal E}$ and $\bar{\mathcal D}$, Theorem \ref{lemm4.4} and Theorem \ref{lemm4.5}, we find
\begin{equation}\label{15.26}
\bar {\mathcal E}\leq \mathcal E\lesssim \bar{\mathcal E},~~{\rm and} \bar {\mathcal D}\leq \mathcal D\lesssim \bar{\mathcal D},
\end{equation}
as $\delta_\star$ small enough.

Furthermore, substituting \eqref{15.26} into Theorem \ref{lemm4.3}, one has
\begin{equation}
\frac{d}{dt}(\mathcal E-\mathcal F )+\mathcal D\leq 0,
\end{equation}
as $\delta_\star$ small enough.
Moreover, the estimates in \eqref{14.2} tell us that $|\mathcal F|\leq \mathcal E^{3/2}\leq \sqrt\mathcal E\mathcal E$, hence
\begin{equation}\label{15.27}
\frac{d}{dt}\mathcal E+\mathcal D\leq 0.
\end{equation}
On the one hand, we integrate \eqref{15.27} in time over $(0,T)$ to obtain that
\begin{equation}
C\int^T_0\mathcal D(t)dt\leq \mathcal E(T)+C\int^T_0\mathcal D(t)dt\leq \mathcal E(0).
\end{equation}
On the other hand, obviously, we have the bound $\mathcal E\leq\mathcal D$, then we obtain
\begin{equation}
\frac{d}{dt}\mathcal E+\mathcal E\leq 0.
\end{equation}
Then, by using Gronwall's inequality we complete the proof of \eqref{16.1}.

\end{proof}
\section{For the case $\sigma=0$}

\subsection{Nonlinear estimates}

We will employ the form \eqref{3.1} to study the temporal derivative of solutions to \eqref{1.14}. That is, we apply $\partial^{\alpha}$ to \eqref{1.14} to deduce that $(v,~H, ~q,~h)=(\partial^{\alpha}u,\partial^{\alpha}b,\partial^{\alpha}p,\partial^{\alpha}\eta)$ satisfy \eqref{3.1} for certain terms $F^{i}$ for $\partial^{\alpha}=\partial^{\alpha_{0}}_{t}$ with $\alpha_{0}\leq 2N$. Below we record the form of these forcing terms $F^{i}, i=1,2,3,4,5$ for this particular problem, where $F^{1}= \sum_{l=1}^{7} F^{1,l}$, for
\begin{eqnarray}\label{03.1}
&&F_{i}^{1,1}: = \sum_{0<\beta<\alpha} C_{\alpha,\beta} \partial^{\beta}(\partial_{t} \bar{\eta}\tilde{b} K) \partial^{\alpha-\beta} \partial_{3} u_{i} + \sum_{0<\beta\leq\alpha}  C_{\alpha,\beta} \partial^{\alpha-\beta}
\partial_{t} \bar{\eta} \partial^{\beta} (\tilde{b} K) \partial_{3} u_{i}
 ~\nonumber\\
&&F_{i}^{1,2}: = -\sum_{0<\beta\leq\alpha} C_{\alpha,\beta} (\partial^{\beta}(u_{j}\mathcal{A}_{jk}) \partial^{\alpha-\beta} \partial_{k} u_{i} + \partial^{\beta}\mathcal{A}_{ik} \partial^{\alpha-\beta} \partial_{k}p) ~\nonumber~\\
&&F_{i}^{1,3}: = \sum_{0<\beta\leq\alpha} C_{\alpha,\beta}\partial^{\beta} \mathcal{A}_{jl} \partial^{\alpha-\beta}\partial_{l} (\mathcal{A}_{im}\partial_{m} u_{j}+ \mathcal{A}_{jm} \partial_{m} u_{i})  ~\nonumber~\\
&&F_{i}^{1,4}: = \sum_{0<\beta<\alpha} C_{\alpha,\beta}\mathcal{A}_{jk} \partial_{k} (\partial_{\beta} \mathcal{A}_{il} \partial^{\alpha-\beta}\partial_{l} u_{j}+ \partial^{\beta} \mathcal{A}_{jl} \partial^{\alpha-\beta} \partial_{l} u_{i}) ~\\
&&F_{i}^{1,5}: = \partial^{\alpha}\partial_{t} \bar{\eta}\tilde b K \partial_{3} u_{i} ~~{\rm and}~~
F_{i}^{1,6}: =\mathcal{A}_{jk}\partial_{k} (\partial^{\alpha} \mathcal{A}_{il} \partial_{l} u_{j} + \partial^{\alpha} \mathcal{A}_{jl}\partial_{l} u_{i}) ~\nonumber~\\
&&F_{i}^{1,7}: = \sum_{0<\beta\leq\alpha} C_{\alpha,\beta} \partial^{\beta}[(b_j+\bar B_j)\ma_{jk}]\partial^{\alpha-\beta}(\partial_k b_i)\nonumber.
\end{eqnarray}
\begin{equation}\label{3.9}
F^{2,1} := -\sum_{0<\beta<\alpha} C_{\alpha,\beta}\partial^{\beta} \mathcal{A}_{ij} \partial^{\alpha-\beta} \partial_{j} u_{i}, ~~{\rm and}~~F^{2,2}=-\partial^{\alpha}\mathcal{A}_{ij} \partial_{j} u_{i}.
\end{equation}
\begin{eqnarray}\label{03.3}
&&F_i^{3,1}: = \sum_{0<\beta<\alpha} C_{\alpha,\beta} \partial^{\beta}(\partial_{t} \bar{\eta}\tilde{b} K) \partial^{\alpha-\beta} \partial_{3}b_{i} + \sum_{0<\beta\leq\alpha}  C_{\alpha,\beta} \partial^{\alpha-\beta}
\partial_{t} \bar{\eta} \partial^{\beta} (\tilde{b} K) \partial_{3} b_{i}
 ~\nonumber~\\
&&F_i^{3,2}: = -\sum_{0<\beta\leq\alpha} C_{\alpha,\beta} \partial^{\beta}(u_{j}\mathcal{A}_{jk}) \partial^{\alpha-\beta} \partial_{k} b_i  ~\nonumber~\\
&&F_i^{3,3}: = \sum_{0<\beta\leq\alpha} C_{\alpha,\beta}\partial^{\beta} \mathcal{A}_{jl} \partial^{\alpha-\beta} (\mathcal{A}_{jm}\partial_{m} b_i )~\\
&&F_{i}^{3,4}: = -\sum_{0<\beta\leq\alpha} C_{\alpha,\beta} \partial^{\beta}(u_{j}\mathcal{A}_{jk}) \partial^{\alpha-\beta} \partial_{k} b_{i}~\nonumber~\\
&&F_i^{3,5}: = \partial^{\alpha}\partial_{t} \eta\tilde{b} K \partial_{3} b_i ~~{\rm and}~~
F_i^{3,6}: =\mathcal{A}_{jk}\partial^{\alpha} \mathcal{A}_{jl} \partial_{l} b_i,~\nonumber~\\
&&F_{i}^{3,7}: = \sum_{0<\beta\leq\alpha} C_{\alpha,\beta} \partial^{\beta}[(b_j+\bar B_j)\ma_{jk}]\partial^{\alpha-\beta}(\partial_k u_i)\nonumber.
\end{eqnarray}
$F_i^{4}=F_i^{4,1}+F_i^{4,2}$, we have
\begin{eqnarray}\label{03.4}
&&F_{i}^{4,1}: = -(\sum_{0<\beta\leq\alpha} C_{\alpha,\beta}\partial^{\beta}D\eta (\partial^{\alpha-\beta} \eta-\partial^{\alpha-\beta} p),  ~\nonumber~\\
&&F_{i}^{4,2}: = \sum_{0<\beta\leq\alpha} C_{\alpha,\beta}(\partial^{\beta}(\mathcal{N}_{j}\mathcal{A}_{im})\partial^{\alpha-\beta}\partial_{m} u_{j}+
\partial^{\beta}(\mathcal{N}_{j}\mathcal{A}_{jm})\partial^{\alpha-\beta}\partial_{m} u_{i}),
\end{eqnarray}
\begin{equation}\label{03.5}
F^{5} := -\sum_{0<\beta\leq\alpha} C_{\alpha,\beta}\partial^{\beta} D\eta\cdot \partial^{\alpha-\beta} u.
\end{equation}

Now we present the estimates for $F^i~(i=1,\cdots,5)$ when $\partial^\alpha=\partial^{\alpha_0}_t$ for $\alpha_0\leq n.$
\begin{lemm}\label{lemm5.1}
$F^i~(i=1,\cdots,5)$ be defined in \eqref{03.1}-\eqref{03.5}. Let $\partial^{\alpha}= \partial^{\alpha_{0}}_{t}$ with $\alpha_0\leq n$ for $n=2N$ or $n=N+2$. Then, we have
\begin{equation}\label{4.3}
\l F^{1}\r^{2}_{0}+\l F^{2}\r ^{2}_{0}+\l \partial_{t}(J F^{2})\r ^{2}_{0}+\l F^{3}\r ^{2}_{0}+\left\|F^{4}\right\|^{2}_{0}+ \l F^{5}\r ^{2}_{0} \lesssim \mathcal{E}_{2N} \mathcal{D}_{n},
\end{equation}
and
\begin{equation}\label{4.4}
\l F^{2}\r^{2}_{0} \leq \mathcal{E}_{2N}\mathcal{E}_{n}.
\end{equation}
\end{lemm}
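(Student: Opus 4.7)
The plan is to estimate each term $F^1, \ldots, F^5$ separately by expanding the Leibniz sums and applying Hölder's inequality in $L^2$, then using Sobolev embeddings (\emph{e.g.} $H^2(\Omega)\hookrightarrow L^\infty$, $H^1(\Omega)\hookrightarrow L^6$, $H^{1/2}(\Sigma)\hookrightarrow L^3$, with trace theory for boundary factors) together with Lemma \ref{lemm4.1} and Lemma \ref{lemm4.2} to control the geometric coefficients $A,B,J,K,\mathcal{N},\mathcal{A}$ and their temporal derivatives by $\bar\eta$-quantities and hence by $\sqrt{\mathcal{E}_{2N}}$ or $\sqrt{\mathcal{D}_n}$. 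Harmonic-extension bounds from Lemma \ref{lemmA.4} reduce every $\|\partial^j_t\bar\eta\|_k$ to the corresponding boundary norm $\|\partial^j_t\eta\|_{k-1/2}$, which is then absorbed into the appropriate component of $\mathcal{E}_{2N}$ or $\mathcal{D}_n$.

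The organizing principle is a standard high-low split. In each product $\partial^\beta(\cdot)\,\partial^{\alpha-\beta}(\cdot)$ with $|\alpha|=2\alpha_0\le 2n$, I would place the factor carrying fewer (temporal) derivatives in $L^\infty$, bounding it by the energy $\mathcal{E}_{2N}$ through Sobolev embedding, and keep the factor carrying more derivatives in $L^2$, bounding it by $\mathcal{D}_n$. Concretely: if $\beta_0\le n/2$ the coefficient factor (which involves $\bar\eta$, $u$, or $b$) can afford two extra spatial derivatives and fits in $\mathcal{E}_{2N}$ via $H^2\hookrightarrow L^\infty$; if $\beta_0>n/2$ then $\alpha_0-\beta_0<n/2$ and I swap the roles, now putting the $u,p,b$ factor in $L^\infty$ via $\mathcal{E}_{2N}$ and the coefficient factor in $L^2$ via $\mathcal{D}_n$. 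The endpoint terms (those with $\beta=\alpha$ in $F^{1,5}$, $F^{1,6}$, $F^{2,2}$, $F^{3,5}$, $F^{3,6}$) require the coefficient factor itself to sit at the top derivative count $n+1$ in time; this is exactly where Lemma \ref{lemm4.2} is invoked so that $\|\partial_t^{n+1}\mathcal{A}\|_0,\|\partial_t^{n+1}J\|_0\lesssim \sqrt{\mathcal{D}_n}$, paired with $\|\partial_3 u\|_{L^\infty}\lesssim\|u\|_3\lesssim\sqrt{\mathcal{E}_{2N}}$ (and similarly for $b$).

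The terms $F^4$ and $F^5$ live on $\Sigma$ but the statement only asks for their $L^2(\Omega)$-norms; for the products of $\partial^\beta\eta$ (or $\partial^\beta D\eta$, $\partial^\beta \mathcal{N}$) against traces of $\partial^{\alpha-\beta}u$, $\partial^{\alpha-\beta}p$, I use the trace inequality $\|\cdot\|_{L^p(\Sigma)}\lesssim \|\cdot\|_{H^{1/2+\varepsilon}(\Omega)}$ together with the corresponding $\eta$-entries of $\mathcal{E}_{2N}$ and $\mathcal{D}_n$ (note that $\mathcal{D}_n$ contains $\|\partial_t^j\eta\|_{2n-2j+5/2}^2$ for $j\ge 2$ and $\|\partial_t\eta\|_{2n-1/2}^2$, which is precisely the regularity one needs when $\eta$ carries many derivatives). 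The $\partial_t(JF^2)$ estimate is handled by expanding $\partial_t J F^2+J\partial_t F^2$ and using the same high-low scheme, crucially absorbing the one extra time derivative into $\mathcal{D}_n$ via Lemma \ref{lemm4.2}. Finally, the second bound \eqref{4.4} for $F^2$ is softer: since $F^2$ contains no highest-derivative coefficient term, each factor can be bounded purely at the energy level, giving $\mathcal{E}_{2N}\mathcal{E}_n$ without appealing to any dissipative norm.

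The main obstacle will be the combinatorial bookkeeping: one must check for every $0<\beta\le\alpha$ that at least one of the two factors fits into $\mathcal{E}_{2N}$ with two spatial derivatives to spare for Sobolev embedding, while the other is covered by $\mathcal{D}_n$ (or $\mathcal{E}_n$ for \eqref{4.4}). The borderline cases — those where $\beta_0$ is close to $n/2$ and neither factor obviously dominates — must be handled by using intermediate $L^p$ embeddings such as $H^1\hookrightarrow L^6$ and $L^3\times L^6\hookrightarrow L^2$, so that one factor sits in $L^6$ (controlled by $\mathcal{E}_{2N}$) and the other in $L^3$ (controlled by $\mathcal{D}_n$ via a half-derivative Sobolev embedding). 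Since the $F^i$ here have exactly the same algebraic structure as the ones treated in \cite{Tice} for the pure surface-wave problem (with the extra magnetic terms $F^{1,7}$, $F^{3,7}$ handled identically using $H^2(\Omega)\hookrightarrow L^\infty$ on $b$), I expect the argument to be essentially a verification exercise, and indeed the authors may simply cite \cite{Tice} as they did in Lemma \ref{lemm14.2}.
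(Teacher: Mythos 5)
Your proposal follows essentially the same route as the paper: a high-low splitting of each quadratic term $XY$ with the lower-derivative factor in $L^\infty$ (bounded by $\mathcal{E}_{2N}$ via Sobolev embedding) and the higher-derivative factor in $L^2$ (bounded by $\mathcal{D}_n$, or by $\mathcal{E}_n$ for \eqref{4.4}), trace estimates for $F^4,F^5$, and Lemma \ref{lemm4.2} to handle the endpoint term $\partial_t(JF^{2,2})$ with $\alpha_0=n$ where $\partial_t^{n+1}\mathcal{A}$ appears. The only (harmless) slip is calling $\|F^4\|_0,\|F^5\|_0$ norms on $\Omega$ rather than on $\Sigma$; otherwise your more detailed bookkeeping is just a fleshed-out version of the paper's sketch.
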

\begin{proof}
Firstly, we consider the estimate for $F^1$. Note that each term in the sums is at least quadratic, and each such term can be written in the form $XY$, where $X$ involves fewer derivative counts than $Y$. We may apply the usual Sobolev embeddings Lemmas along with the definitions of $\mathcal{E}_{2N}$ and $\mathcal{D}_n$ to estimate $\l X\r^2_{L^\infty}\lesssim\mathcal E_{2N}$ and $\l Y\r^2_{0}\lesssim\mathcal D_{n}$. Hence $\l XY\r^2_0\leq \l X\r^2_{L^\infty}\l Y\r^2_{0}\lesssim \mathcal E_{2N}\mathcal D_{n}$. The estimates of $F^2$, $F^3$ and \eqref{4.4} are similarly. A similar argument also employing trace estimates obtain the estimates of $F^4$ and $F^5$. The same argument also works for
$\partial_t(JF^{2,1})$. To bound $\partial_t(JF^{2,2})$ for $\alpha_0=n$ we have to estimate $\l\partial^{n+1}_t\ma\r^2_0\lesssim\mathcal D_n$, but this is possible due to Lemma \ref{lemm4.2}. Then a similar splitting into $L^{\infty}$ and $H^0$ estimates shows that $\l\partial_t(JF^{2,2})\r\lesssim \mathcal{E}_{2N}\mathcal{D}_n$, and then we complete the proof of \eqref{4.3}.
\end{proof}

Now, for the case $\sigma=0$, we first estimate the $G^i$ terms defined in \eqref{3.11}-\eqref{3.15} at the $2N$ level.
\begin{lemm}\label{lemm5.2}
Let $G^{1},...,G^{5}$  de defined in \eqref{3.11}-\eqref{3.15}. There exists a $\theta>0$ such that,
\begin{equation}\label{4.7}
\begin{aligned}
&\l\bar{\nabla}^{4N-2}_0G^{1}\r^{2}_{0} +\l\bar{\nabla}^{4N-2}_0G^{2}\r^{2}_{1}+ \l\bar{\nabla}_0^{4N-2}G^{3} \r^{2}_{0} \\
&~~~~~+ \l\bar{D}^{4N-2}_{0} G^{4}\r^{2}_{1/2} \lesssim \mathcal{E}^{1+\theta}_{2N},
\end{aligned}
\end{equation}
\begin{equation}\label{4.8}
\begin{aligned}
&\l\bar{\nabla}^{4N-2}_0G^{1}\r^{2}_{0} +\l\bar{\nabla}_0^{4N-2}G^{2}\r^{2}_{1}+ \l\bar{\nabla}_0^{4N-2}G^{3} \r^{2}_{0} + \l\bar{D}^{4N-2}_{0} G^{4}\r^{2}_{1/2}\\
&+\l\bar{D}^{4N-2}_0 G^{5}\r^{2}_{1/2}
+\l\bar{\nabla}^{4N-3}\partial_{t} G^{1}\r^{2}_{0} +\l\bar{\nabla}^{4N-3} \partial_{t}G^{2}\r^{2}_{1}+ \l\bar{\nabla}^{4N-3}\partial_{t} G^{3} \r^{2}_{0}\\
&~~~~~~~ + \l\bar{D}^{4N-3} \partial_{t} G^{4}\r^{2}_{1/2}+\l\bar{D}^{4N-2} \partial_{t} G^{5}\r^{2}_{1/2}
\lesssim \mathcal{E}^{\theta}_{2N} \mathcal{D}_{2N},
\end{aligned}
\end{equation}
and
\begin{equation}\label{4.9}
\begin{aligned}
&\l{\nabla}^{4N-1}G^{1}\r^{2}_{0} +\l{\nabla}^{4N-1}G^{2}\r^{2}_{1}+ \l{\nabla}^{4N-1}G^{3} \r^{2}_{0}
+ \l{D}^{4N-1} G^{4}\r^{2}_{1/2}\\
&+\l{D}^{4N-1} G^{5}\r^{2}_{1/2} \lesssim \mathcal{E}^{\theta}_{2N}\mathcal D_{2N}
+\mathcal{E}_{N+2}\mathcal F_{2N}.
\end{aligned}
\end{equation}
\end{lemm}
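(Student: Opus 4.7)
The plan is to treat each of $G^{1},\dots,G^{5}$ term by term using their explicit expressions in \eqref{3.11}--\eqref{3.12}. Every summand is at least quadratic in the unknowns $(u,p,b,\eta)$ (after accounting for the dependence of $A,B,J,K,\mathcal{A},\mathcal{N}$ on $\eta$ via Lemmas \ref{lemm4.1}--\ref{lemm4.2}), so after applying $\bar{\nabla}^{4N-2}_0$, $\partial_t$ or $\nabla^{4N-1}$ and expanding with the Leibniz rule, each resulting piece has the form $X\,Y$ where $X$ carries strictly fewer derivative counts than $Y$. The universal strategy is to put $X$ into $L^\infty$ (or $L^4$ on $\Sigma$ via trace) and $Y$ into $L^2$ (or $H^{1/2}$ on $\Sigma$), then to invoke the Sobolev embeddings together with the definitions \eqref{2.1}--\eqref{2.3} to convert the resulting norms into $\mathcal{E}_{2N}$, $\mathcal{D}_{2N}$ or $\mathcal{F}_{2N}$.

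For \eqref{4.7}, the total derivative count is at most $4N-2$, so for every factorization $X\cdot Y$ the low-regularity factor $X$ satisfies $\|X\|_{L^\infty}^2\lesssim\mathcal{E}_{2N}$ with some room to spare (e.g.\ via $H^{2}(\Omega)\hookrightarrow L^\infty$ applied to a quantity that is already controlled in $H^{2N-1}$), while the high-regularity factor $Y$ satisfies $\|Y\|_0^2\lesssim\mathcal{E}_{2N}$. That spare room yields the small gain $\theta>0$, exactly as in \cite[Lemma~4.3]{Tice}. For \eqref{4.8}, the same dichotomy is used, but now the high-regularity factor is instead placed at the dissipation level, $\|Y\|^2\lesssim\mathcal{D}_{2N}$, and for the $\partial_t$ versions Lemma \ref{lemm4.2} supplies the missing control $\|\partial_t^{n+1}\mathcal{A}\|_0^2\lesssim\mathcal{D}_n$ needed to handle the highest-temporal-derivative pieces of $G^{1,2},G^{2},G^{3,1},G^{5}$ where $\partial_t$ falls on the geometric coefficients. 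Trace estimates on $\Sigma$ handle $G^{4},G^{5}$.

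The genuinely delicate estimate is \eqref{4.9}, since here we are at the top spatial level $4N-1$. After the Leibniz expansion, the only dangerous terms are those in which \emph{all} the derivatives land on the $\eta$-dependent coefficients $A,B,J,K,\mathcal{A},\mathcal{N}$, e.g.\ schematically $(\partial^{4N-1}\mathcal{A})\cdot(\nabla u)$ in $G^{1,4}$, $(D^{4N-1}\eta)\cdot(\nabla u)$ in $G^{4}$, or $(D^{4N-1}\eta)\cdot u$ in $G^{5}$. Such terms cannot be bounded by $\mathcal{D}_{2N}$ alone because $\eta$ at the $4N+1/2$ level is only controlled by $\mathcal{F}_{2N}$; for these we use $\|\nabla^{4N-1}\bar\eta\|_0^2\lesssim\mathcal{F}_{2N}$ together with Lemma \ref{lemmA.4} and place the companion factor $u$ (or $\nabla u$, $b$, $p$) into $L^\infty$ using only the intermediate energy $\mathcal{E}_{N+2}$, since $N+2$ derivatives suffice for Sobolev embedding into $L^\infty$. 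This produces the characteristic $\mathcal{E}_{N+2}\mathcal{F}_{2N}$ contribution on the right-hand side of \eqref{4.9}. All remaining distributions of derivatives leave at least one factor with strictly fewer than $4N-1$ derivatives and at least one with $\eta$-count at most $4N-1/2$, so both factors are inside $\mathcal{E}_{2N}$ or $\mathcal{D}_{2N}$ and are absorbed into $\mathcal{E}_{2N}^{\theta}\mathcal{D}_{2N}$ exactly as in \eqref{4.8}.

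The main obstacle is a careful bookkeeping in \eqref{4.9}: one must verify that for every multi-index $\beta$ appearing in the Leibniz expansion of $\partial^{\alpha}G^{i}$ with $|\alpha|=4N-1$, either the number of derivatives falling on $\eta$ is at most $4N-1/2$ (allowing $\mathcal{E}_{2N}$ or $\mathcal{D}_{2N}$ control of that factor), or it is exactly the top count, in which case the other factor must be tame enough in $L^\infty$ to be absorbed by $\mathcal{E}_{N+2}$. Because $2N-(N+2)=N-2\geq 1$ when $N\geq 3$, there is always enough room to make this split. Once this bookkeeping is arranged, the three estimates follow by a routine application of Hölder's inequality, the trace theorem on $\Sigma$, the Sobolev embeddings $H^2(\Omega)\hookrightarrow L^\infty(\Omega)$ and $H^{s+1/2}(\Sigma)\hookrightarrow L^\infty(\Sigma)$ for $s>1/2$, and the Poisson-extension bounds of Lemma \ref{lemmA.4}. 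The proof thereby reduces to the same type of computation carried out in \cite[Theorem~4.3]{Kim} and \cite[Lemma~4.3]{Tice}, with the only new ingredient being the additional magnetic terms in $G^{1,3}$ and $G^{3,2}$, which are handled identically because $\bar B$ is constant and $b$ enjoys the same regularity as $u$ in both $\mathcal{E}_{2N}$ and $\mathcal{D}_{2N}$.
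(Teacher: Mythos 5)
Your sketch is correct and is essentially the same argument the paper relies on: the paper gives no details for this lemma, deferring to the analogous estimates of Guo--Tice \cite{Guooo}, whose proof is precisely the Leibniz-expansion with the $X\in L^\infty$, $Y\in L^2$ (or $H^{1/2}(\Sigma)$) splitting you describe, the spare derivative count giving $\theta>0$, Lemma \ref{lemm4.2} handling the extra time derivative on $\mathcal{A}$, and the $\mathcal{E}_{N+2}\mathcal{F}_{2N}$ term arising when all derivatives fall on $\eta$. One minor bookkeeping quibble: the interior term $(\partial^{4N-1}\mathcal{A})\,\nabla u$ in $G^{1,4}$ only involves $\nabla^{4N}\bar\eta$, hence by Lemma \ref{lemmA.4} it is controlled by $\|\eta\|^2_{4N-1/2}\lesssim\mathcal{D}_{2N}$ and does not force $\mathcal{F}_{2N}$; the genuinely $\mathcal{F}_{2N}$-requiring terms are the boundary contributions from $G^{4},G^{5}$ measured in $H^{1/2}(\Sigma)$, where $D^{4N}\eta$ needs $\|\eta\|_{4N+1/2}$ --- though your cruder bound still lands within the stated right-hand side, so this is an imprecision, not a gap.
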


\begin{proof}
These estimates can be proved similar as [8, Theorem 3.3]. 

\end{proof}
Similarly, we can obtain the estimate of $G^i$ terms defined in \eqref{3.11}-\eqref{3.15} at the $N+2$ level as $\sigma=0$.
\begin{lemm}\label{lemm5.3}
Let $G^{1},...,G^{5}$  de defined in \eqref{3.11}-\eqref{3.15}. There exists a $\theta>0$ such that,
\begin{equation}\label{4.10}
\begin{aligned}
&\l\bar{\nabla}^{2(N+2)-2}_0G^{1}\r^{2}_{0} +\l\bar{\nabla}^{2(N+2)-2}_0G^{2}\r^{2}_{1}+ \l\bar{\nabla}_0^{2(N+2)-2}G^{3} \r^{2}_{0} \\
&~~~+ \l\bar{D}^{2(N+2)-2}_{0} G^{4}\r^{2}_{1/2} \lesssim \mathcal{E}^{\theta}_{2N}\mathcal E_{N+2},
\end{aligned}
\end{equation}
and
\begin{equation}\label{4.11}
\begin{aligned}
&\l\bar{\nabla}^{2(N+2)-1}_0G^{1}\r^{2}_{0} +\l\bar{\nabla}_0^{2(N+2)-1}G^{2}\r^{2}_{1}+ \l\bar{\nabla}_0^{2(N+2)-1}G^{3} \r^{2}_{0} + \l\bar{D}^{{2(N+2)-1}} G^{4}\r^{2}_{1/2}\\
&~~~~~~~+\l\bar{D}^{2(N+2)-1}_0 G^{5}\r^{2}_{1/2}
+\l\bar{D}^{2(N+2)-2} \partial_{t} G^{5}\r^{2}_{1/2}
\lesssim \mathcal{E}^{\theta}_{2N} \mathcal{D}_{N+2},
\end{aligned}
\end{equation}
\end{lemm}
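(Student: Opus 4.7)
The plan is to prove Lemma \ref{lemm5.3} by the same Leibniz high--low splitting used to prove Lemma \ref{lemm5.2} (which itself follows [\cite{Guo}, Theorem 3.3]), now retuned from the $2N$ level down to the $N+2$ level. Every term appearing in $G^1,\ldots,G^5$ from \eqref{3.11}--\eqref{3.12} is at least quadratic in the unknowns $(u,b,p,\eta)$ and the geometric quantities $(\bar\eta,\ma,A,B,J,K)$. After applying a space--time derivative $\partial^\alpha$ of parabolic order at most $2(N+2)-2$ for \eqref{4.10} (resp.\ $2(N+2)-1$ for \eqref{4.11}) and expanding by Leibniz, one obtains a finite sum of products of the schematic form $\partial^\beta X\cdot\partial^{\alpha-\beta}Y$, where $X$ is built from the geometric quantities and $Y$ is a derivative of $u,b$, or $p$.

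The key mechanism is the following: because the hypothesis $N\ge 3$ gives a comfortable gap $2(N+2)\ll 4N$, for every Leibniz split at least one of the two multi-indices $\beta$ or $\alpha-\beta$ has parabolic length $\le N+2$, while the other has length $\le 2N$. The low-derivative factor is put into $L^\infty(\Omega)$ via the Sobolev embedding $H^2\hookrightarrow L^\infty$ and Lemma \ref{lemmA.4}, which is admissible because $\mathcal E_{2N}$ controls sufficiently many derivatives of every geometric object; this produces a factor bounded by $\mathcal E_{2N}^{1/2}$. The remaining factor, of total parabolic order $\le 2(N+2)-2$ or $2(N+2)-1$, is controlled by $\mathcal E_{N+2}^{1/2}$ in \eqref{4.10}, and by either $\mathcal E_{N+2}^{1/2}$ or (for the top-order dissipation-level terms) $\mathcal D_{N+2}^{1/2}$ in \eqref{4.11}. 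Cauchy--Schwarz then yields the claimed bounds with $\theta=1$ arising from the $\mathcal E_{2N}^{1/2}\cdot\mathcal E_{2N}^{1/2}$ packaging when multiple low-derivative factors appear.

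Two places warrant a little extra care. The boundary terms $G^4$ and $G^5$ live on $\Sigma$, so the $L^\infty$ factor is handled by the trace theorem combined with horizontal Sobolev embedding, and $\ma$, $\mathcal N$, and $\bar\eta|_{\Sigma}$ are traded for $\eta$ via Lemma \ref{lemmA.4}. The final summand $\|\bar D^{2(N+2)-2}\partial_t G^5\|_{1/2}^2$ is the most delicate: since $G^5=-D\eta\cdot u$, the extra time derivative produces $D\partial_t\eta\cdot u + D\eta\cdot\partial_t u$; the parabolic count then shows that in the worst term one factor has $\le N+3$ temporal derivatives on $\eta$, whose $H^{1/2}(\Sigma)$ norm is controlled by $\mathcal D_{N+2}^{1/2}$ thanks to the definition \eqref{2.2} and Lemma \ref{lemm4.2}. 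The other factor has enough regularity to sit in $L^\infty$ and be absorbed by $\mathcal E_{2N}^{\theta}$.

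The main obstacle is the bookkeeping rather than any conceptual difficulty: for each of $G^1,\ldots,G^5$ and each Leibniz index $\beta$ one must verify that the derivative allocation really does leave one factor at parabolic order $\le N+2$ on which an $L^\infty$ bound from $\mathcal E_{2N}$ is available. Because $b$ enters the functionals $\mathcal E_n$ and $\mathcal D_n$ with exactly the same regularity as $u$, the new $b$-dependent contributions in $G^1$, $G^3$, $G^4$ require no new estimates beyond those used at the $2N$ level in Lemma \ref{lemm5.2}; one simply repeats the case analysis of [\cite{Guo}, Theorem 3.3] with the thresholds $2N$ replaced by $N+2$.
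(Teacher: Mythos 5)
Your proposal is correct and follows essentially the same route as the paper, which in fact gives no proof of Lemma \ref{lemm5.3} at all and (as for Lemma \ref{lemm5.2}) simply appeals to the Leibniz high--low splitting of the Guo--Tice nonlinear estimates retuned to the $N+2$ level, exactly the mechanism you describe. The only caution is that for a few top-order terms the roles in your splitting must be swapped --- e.g.\ when all $2(N+2)-1$ derivatives in the $H^{1/2}$ estimate of $G^4$ fall on $\eta$, the resulting $\|\eta\|_{2(N+2)+1/2}$ is not controlled by $\mathcal{D}_{N+2}$ and must instead be absorbed by $\mathcal{E}_{2N}$ (using $2(N+2)+1/2\leq 4N$ for $N\geq3$), with the low-order factor placed in $L^\infty$ and bounded via $\mathcal{E}_{N+2}\lesssim\mathcal{D}_{N+2}$ --- which is precisely the case analysis you defer to.
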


\subsection{Energy evolution}

We define the temporal energy and dissipation, respectively, as
\begin{eqnarray}\label{5.1}
\bar{\mathcal{E}}_{n}^{0} := \sum_{j=0}^{n} (\l\sqrt{J} \partial_{t}^{j} u \r_{0}^{2} +\l\sqrt{J} \partial_{t}^{j} b\r_{0}^{2}+\l \partial_{t}^{j} \eta \r_{0}^{2} )~
\end{eqnarray}
\begin{eqnarray}\label{5.2}
\bar{\mathcal{D}}_{n}^{0} := \sum_{j=0}^{n} (\l \mathbb{D}\partial_{t}^{j} u \r_{0}^{2} +\l \nabla \partial_{t}^{j} b \r_{0}^{2}).
\end{eqnarray}
Then, we define the horizontal energies and dissipation, respectively, as
\begin{equation}\label{5.3}
\begin{aligned}
\bar{\mathcal{E}}_{n}:=& \left\| \bar{D}_{0}^{2n-1}u\right\| _{0}^{2} +\l D \bar{D}^{2n-1}u\r _{0}^{2}+\l \bar{D}_{0}^{2n-1}b\r _{0}^{2} \\
&+\l D\bar{D}^{2n-1}b\r _{0}^{2}+\l \bar{D}^{2n-1}\eta\r _{0}^{2} +\l D \bar{D}^{2n-1}\eta\r _{0}^{2},
\end{aligned}
\end{equation}
and
\begin{equation}\label{5.4}
\begin{aligned}
\bar{\mathcal{D}}_{n}:=& \l \bar{D}_{0}^{2n-1} \mathbb{D}(u)\r_{0}^{2} +\l D \bar{D}
^{2n-1}\mathbb{D}(u)\r_{0}^{2}+\l \bar{D}_{0}^{2n-1} \nabla b\r_{0}^{2} +\l D\bar{D}
^{2n-1}\nabla b\r_{0}^{2}.
\end{aligned}
\end{equation}

\subsubsection{Energy Evolution of Temporal Derivatives}

First, we present the temporal derivatives estimates at $2N$ level.
\begin{lemm}\label{lemm5.4}
There exist a $\theta>0$ so that
\begin{equation}\label{5.5}
\bar{\mathcal{E}}_{2N}^{0} (t) + \int_{0}^{t} \bar{\mathcal{D}}_{2N}^{0} \lesssim \mathcal{E}_{2N}(0)+
(\mathcal{E}_{2N}(t))^{3/2} + \int_{0}^{t} (\mathcal{E}_{2N})^{\theta}\mathcal{D}_{2N}.
\end{equation}
\end{lemm}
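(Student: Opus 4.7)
The plan is to carry out an energy-dissipation estimate at each temporal order $j=0,1,\dots,2N$ and then sum and integrate in time. For each such $j$, I apply $\partial_t^j$ to \eqref{1.14} and observe that $(v,H,q,h)=(\partial_t^j u,\partial_t^j b,\partial_t^j p,\partial_t^j\eta)$ solves the geometric system \eqref{3.1} (with $\sigma=0$) for the forcing terms $F^i$ listed in \eqref{03.1}--\eqref{03.5}. Lemma \ref{lemm3.1} then yields, for each $j$, the identity
\begin{equation*}
\frac{d}{dt}\!\left(\int_\Omega\frac{|\partial_t^j u|^2}{2}J+\int_\Omega\frac{|\partial_t^j b|^2}{2}J+\int_\Sigma\frac{|\partial_t^j\eta|^2}{2}\right)+\int_\Omega\frac{|\mathbb{D}_\ma \partial_t^j u|^2}{2}J+\int_\Omega|\nabla_\ma \partial_t^j b|^2 J = \mathcal{R}_j,
\end{equation*}
where $\mathcal{R}_j$ collects the interaction terms $\int_\Omega(\partial_t^j u\cdot F^1+qF^2+\partial_t^j b\cdot F^3)J-\int_\Sigma \partial_t^j u\cdot F^4+\int_\Sigma \partial_t^j\eta\, F^5$. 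Summing over $j=0,\ldots,2N$ and integrating in $t$ gives the left-hand side $\bar{\mathcal{E}}_{2N}^0(t)+\int_0^t\bar{\mathcal{D}}_{2N}^0$ modulo an initial term bounded by $\mathcal{E}_{2N}(0)$.

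Next I bound the $\mathcal{R}_j$ terms using Lemma \ref{lemm5.1}. For all contributions except the pressure one, I use Cauchy--Schwarz together with $\|F^i\|_0^2\lesssim \mathcal{E}_{2N}\mathcal{D}_{2N}$. For instance, at the highest order $j=2N$,
\begin{equation*}
\left|\int_\Omega \partial_t^{2N}u\cdot F^1 J\right|\lesssim \|\partial_t^{2N}u\|_0\|F^1\|_0 \lesssim \sqrt{\mathcal{E}_{2N}}\,\sqrt{\mathcal{E}_{2N}\mathcal{D}_{2N}}\lesssim \mathcal{E}_{2N}^{1/2}\mathcal{D}_{2N},
\end{equation*}
and the boundary terms involving $F^4,F^5$ are handled analogously by trace estimates. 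All such contributions fit into $\int_0^t \mathcal{E}_{2N}^\theta \mathcal{D}_{2N}$ for some $\theta>0$.

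The main obstacle is the pressure interaction $\int_\Omega qF^2J$ with $q=\partial_t^{2N}p$, since $\partial_t^{2N}p$ is not included in $\mathcal{D}_{2N}$ (the pressure sum in \eqref{2.2} runs only up to $j=n-1$). To overcome this, following the trick used in Lemma \ref{lemm4.3}, I shift a time derivative by writing
\begin{equation*}
\int_\Omega \partial_t^{2N}p\cdot F^{2} J = \frac{d}{dt}\int_\Omega \partial_t^{2N-1}p\cdot F^{2} J - \int_\Omega \partial_t^{2N-1}p\cdot \partial_t(F^2 J).
\end{equation*}
The total-derivative term integrates to a difference of boundary-in-time values; using \eqref{4.4} it is bounded by $\|\partial_t^{2N-1}p\|_0\|F^2\|_0\lesssim \sqrt{\mathcal{E}_{2N}}\cdot\mathcal{E}_{2N}=\mathcal{E}_{2N}^{3/2}$ at time $t$, and similarly at $t=0$ by $\mathcal{E}_{2N}(0)$. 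The remainder has $\|\partial_t^{2N-1}p\|_0\lesssim \sqrt{\mathcal{D}_{2N}}$ (since $\partial_t^{2N-1}p$ sits in $\mathcal{D}_{2N}$ with regularity $H^2$) and, by the $\|\partial_t(JF^2)\|_0$ bound in \eqref{4.3}, contributes $\int_0^t\sqrt{\mathcal{E}_{2N}}\mathcal{D}_{2N}$. For intermediate $j<2N$, $\partial_t^j p$ already lies in $\mathcal{D}_{2N}$ and the pressure interaction is absorbed directly into $\int_0^t\mathcal{E}_{2N}^\theta\mathcal{D}_{2N}$.

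Combining these estimates produces
\begin{equation*}
\bar{\mathcal{E}}_{2N}^0(t)+\int_0^t\bar{\mathcal{D}}_{2N}^0 \lesssim \mathcal{E}_{2N}(0)+\mathcal{E}_{2N}(t)^{3/2}+\int_0^t \mathcal{E}_{2N}^\theta\mathcal{D}_{2N},
\end{equation*}
which is precisely \eqref{5.5}. The delicate piece of the argument is really just the time-integration-by-parts handling of the top-order pressure interaction; the rest is a systematic application of Lemmas \ref{lemm3.1} and \ref{lemm5.1}.
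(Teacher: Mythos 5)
Your proposal follows essentially the same route as the paper's proof: apply $\partial_t^j$ for $j\le 2N$, view the result through the geometric form \eqref{3.1}, invoke Lemma \ref{lemm3.1}, estimate the $F^i$ interactions with Lemma \ref{lemm5.1}, and treat the top-order pressure term $\int_\Omega\partial_t^{2N}p\,F^2J$ by the time integration by parts, using \eqref{4.3} for $\|\partial_t(JF^2)\|_0$ and \eqref{4.4} for the boundary-in-time values; this is exactly the paper's argument, including the $\mathcal{E}_{2N}^{3/2}$ and $\mathcal{E}_{2N}(0)$ contributions. Two small points should be repaired in your write-up. First, your sample chain for the $F^1$ term does not close as written: $\|\partial_t^{2N}u\|_0\|F^1\|_0\lesssim\sqrt{\mathcal{E}_{2N}}\sqrt{\mathcal{E}_{2N}\mathcal{D}_{2N}}=\mathcal{E}_{2N}\sqrt{\mathcal{D}_{2N}}$, and this is not $\lesssim\mathcal{E}_{2N}^{1/2}\mathcal{D}_{2N}$ because $\mathcal{E}_{2N}\lesssim\mathcal{D}_{2N}$ fails in the surface-tension-free setting (the dissipation only controls $\eta$ at the $2n-1/2$ level). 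The fix is to bound the velocity factor by the dissipation, $\|\partial_t^{2N}u\|_0\lesssim\|\partial_t^{2N}u\|_1\lesssim\sqrt{\mathcal{D}_{2N}}$, which is what the paper does and yields $\sqrt{\mathcal{E}_{2N}}\mathcal{D}_{2N}$. Second, Lemma \ref{lemm3.1} produces the weighted, $\mathcal{A}$-dependent quantities $\int_\Omega|\mathbb{D}_{\mathcal{A}}\partial_t^j u|^2J$ and $\int_\Omega|\nabla_{\mathcal{A}}\partial_t^j b|^2J$, whereas $\bar{\mathcal{D}}^0_{2N}$ in \eqref{5.2} is built from the flat operators $\mathbb{D}$ and $\nabla$; you need the comparison step (via Lemma \ref{lemm4.1}) converting one into the other at the cost of further $\int_0^t\sqrt{\mathcal{E}_{2N}}\mathcal{D}_{2N}$ errors, which the paper carries out explicitly at the end of its proof. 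With these two routine adjustments your argument coincides with the paper's.
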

\begin{proof}
We apply $\partial^{\alpha}=\partial^{\alpha_0}_t$ with $0\leq \alpha\leq 2N$ to \eqref{1.14} and set $v=\partial_t^{\alpha_0}u$, $q=\partial_t^{\alpha_0}p$, $H=\partial_t^{\alpha_0}b$, $h=\partial_t^{\alpha_0}\eta$ satisfying \eqref{3.1}. Then, according to Lemma \ref{lemm3.1} and integrating in time from 0 to $t$, we deduce
\begin{align*}
 \int_{\Omega}& \left(\frac{|\partial^{\alpha_{0}}_{t} u|^{2}}{2}+ \frac{|\partial_{t}^{\alpha_{0}} b|^{2}}{2}\right)J + \int_{\Sigma} \frac{|\partial^{\alpha_{0}}_{t}\eta|^{2}}{2}+ \int^t_0\int_{\Omega} \left(\frac{|\mathbb{D}_{\mathcal{A}} \partial^{\alpha_{0}}_{t} u|^{2}}{2}+|\nabla_{\mathcal{A}} \partial^{\alpha_{0}}_{t} b|^{2}\right)J\\
=&\int_{\Omega} \left(\frac{|\partial^{\alpha_{0}}_{t} u(0)|^{2}}{2}+\frac{|\partial_{t}^{\alpha_{0}} b(0)|^{2}}{2}\right)J + \int_{\Sigma} \frac{|\partial^{\alpha_{0}}_{t}\eta(0)|^{2}}{2}+\int^t_0 \int_{\Sigma} (-\partial^{\alpha_{0}}_{t} u \cdot F^{4}+ \partial^{\alpha_{0}}_{t}\eta F^{5})\\
&~~~~~+ \int^t_0\int_{\Omega} (\partial^{\alpha_{0}}_{t} u \cdot F^{1} + \partial^{\alpha_{0}}_{t} p F^{2}+ \partial^{\alpha_{0}}_{t}b\cdot F^{3})J.
\end{align*}
Next, we will estimate the right hand side terms involving $F^i$ of the above equation. For the $F^1$ term, according to Lemma \ref{lemm4.1} and Lemma \ref{lemm5.1}, one has
\begin{align*}
\int^t_0\int_\Omega\partial^{\alpha_{0}}_{t} u\cdot F^1 J\lesssim&\int^t_0\l\partial^{\alpha_{0}}_{t} u\r_0\|J\|_{L^\infty}\|F^1\|_0\\
\lesssim&\int^t_0\sqrt{\mathcal{D}_{2N}}\sqrt{\mathcal{E}_{2N}\mathcal{D}_{2N}}=\int^t_0\sqrt{\mathcal{E}_{2N}}\mathcal{D}_{2N}.
\end{align*}
Similarly,
\begin{equation}
\int^t_0\int\Omega\partial^{\alpha_{0}}_{t} u\cdot F^3 \lesssim\int^t_0\sqrt{\mathcal{E}_{2N}}\mathcal{D}_{2N},
\end{equation}
and
\begin{equation}
\begin{aligned}
 \int_{\Sigma} (-\partial^{\alpha_{0}}_{t} u \cdot F^{4}+ \partial^{\alpha_{0}}_{t}\eta F^{5})
 \lesssim &\int_0^t(\|\partial^{\alpha_0}_t u\|_{H^0(\Sigma)}\|F^4\|_0+\|\partial^{\alpha_0}_t\eta\|_0\|F^5\|_0)\\
\lesssim&\int^t_0\sqrt{\mathcal{D}_{2N}}\sqrt{\mathcal{E}_{2N}\mathcal{D}_{2N}}
 =\int^t_0\sqrt{\mathcal{E}_{2N}}\mathcal{D}_{2N}.
 \end{aligned}
\end{equation}
For the term $\partial^{\alpha_0}_tp F^2$, when $\alpha_0=2N$, there is one more time derivative on $p$ than can be controlled by $\mathcal{D}_{2N}$. Hence, we have to consider the cases $\alpha_0<2N$ and $\alpha_0=2N$ separately. In the
case $\alpha_0=2N$, we have
\begin{equation}
\begin{aligned}
\int^t_0\int_\Omega\partial^{2N}_tp F^2
=&-\int^t_0\int_\Omega\partial^{2N-1}_tp\partial_t(JF^2)+\int_\Omega(\partial^{2N-1}_tpJF^2)(t)\\
&-\int_\Omega(\partial^{2N-1}_tpJF^2)(0).
\end{aligned}
\end{equation}
According to Lemma \ref{lemm5.1}, one has
\begin{align}\label{5.9}
-\int^t_0\int_\Omega\partial^{2N-1}_tp\partial_t(JF^2)\lesssim &\int^t_0\l\partial^{2N-1}_tp\r_0\l\partial_t(JF^2)\r_0\\
\lesssim&\int^t_0\sqrt{\mathcal{D}_{2N}}\sqrt{\mathcal{E}_{2N}\mathcal{D}_{2N}}
 =\int^t_0\sqrt{\mathcal{E}_{2N}}\mathcal{D}_{2N}.\nonumber
 \end{align}
Then, it follows from \eqref{4.4} and Lemma \ref{lemm4.1} that
\begin{align}\label{5.10}
\int_\Omega(\partial^{2N-1}_tpJF^2)(t)\lesssim \l\partial^{2N}_tp\r_0\l F^2\r_0\|J\|_{L^\infty}\lesssim (\mathcal{E}_{2N})^{3/2}.
\end{align}
Combining the estimates \eqref{5.9} and \eqref{5.10}, we obtain
\begin{equation}\label{5.11}
\int^t_0\int_\Omega\partial^{2N}_tp F^2J\lesssim \mathcal{E}_{2N}(0)+(\mathcal{E}_{2N})^{3/2}+\int^t_0\sqrt{\mathcal{E}_{2N}}\mathcal{D}_{2N}.
\end{equation}

 In the other case $0\leq\alpha_0<2N$, by using \eqref{4.3}, we directly have
 \begin{equation}\label{5.11}
\int^t_0\int_\Omega\partial^{\alpha_0}_tp F^2J\lesssim
\int^t_0\l\partial^{\alpha_0}_tp\r\|F^2\|_0
\lesssim\int^t_0\sqrt{\mathcal{D}_{2N}}\sqrt{\mathcal{E}_{2N}\mathcal{D}_{2N}}
 =\int^t_0\sqrt{\mathcal{E}_{2N}}\mathcal{D}_{2N}.
 \end{equation}
 Furthermore, according to Lemma \ref{lemm4.1} we can easily deduce
 \begin{align}
 \int^t_0\int_\Omega\frac{|\mathbb{D}\partial^{\alpha_0}_tu|^2}{2}J
 \lesssim\int^t_0\int_\Omega\frac{|\mathbb{D}_\ma\partial^{\alpha_0}_tu|^2}{2}J+\int^t_0\sqrt\mathcal{E}_{2N}\mathcal{D}_{2N},
 \end{align}
 and
 \begin{align}
 \int^t_0\int_\Omega{|\nabla\partial^{\alpha_0}_tb|^2}J
 \lesssim\int^t_0\int_\Omega{|\nabla_\ma\partial^{\alpha_0}_tb|^2}J+\int^t_0\sqrt\mathcal{E}_{2N}\mathcal{D}_{2N}.
 \end{align}
Therefore, we complete the proof of Lemma \ref{lemm5.4}.

\end{proof}

Now, we present the corresponding estimates at the $N+2$ level.
\begin{lemm}\label{lemm5.5}
In the case $0\leq \alpha_{0} \leq N+2$, we have
\begin{equation}
\partial_t (\bar{\mathcal{E}}^{0}_{N+2}-2\int_\Omega\partial_{t}^{N+1}pF^2J) + \bar{\mathcal{D}}^{0}_{N+2}\lesssim \sqrt{\mathcal{E}_{2N}} \mathcal{D}_{N+2}.
\end{equation}

\end{lemm}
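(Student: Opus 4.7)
The strategy mirrors the proof of Lemma \ref{lemm5.4}, but here I want a differential (pointwise-in-time) inequality rather than a time-integrated one, and the pressure term has to be handled via a correction inside the energy. First I would apply $\partial^{\alpha_0}_t$ with $0\leq\alpha_0\leq N+2$ to \eqref{1.14} and set $(v,q,H,h)=(\partial^{\alpha_0}_t u,\partial^{\alpha_0}_t p,\partial^{\alpha_0}_t b,\partial^{\alpha_0}_t \eta)$, which satisfies the geometric linearized system \eqref{3.1} with forcing terms $F^1,\dots,F^5$ from \eqref{03.1}--\eqref{03.5} (note $\sigma=0$, so the $\sigma\Delta_\star h$ contributions drop out). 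Lemma \ref{lemm3.1} then yields
\begin{equation*}
\frac{d}{dt}\Bigl(\int_\Omega\tfrac{|\partial^{\alpha_0}_t u|^2}{2}J+\int_\Omega\tfrac{|\partial^{\alpha_0}_t b|^2}{2}J+\int_\Sigma\tfrac{|\partial^{\alpha_0}_t\eta|^2}{2}\Bigr)+\int_\Omega\tfrac{|\mathbb{D}_\ma\partial^{\alpha_0}_t u|^2}{2}J+\int_\Omega|\nabla_\ma\partial^{\alpha_0}_t b|^2 J=\mathrm{RHS},
\end{equation*}
where RHS collects the $F^i$ interaction integrals.

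Next I would estimate each RHS term by Cauchy--Schwarz, trace theory, and Lemma \ref{lemm5.1} adapted to the $N+2$ level (the same $L^\infty$-versus-$H^0$ splitting used in Lemma \ref{lemm5.1} applies, bounding the low-derivative factor in $\mathcal{E}_{2N}$ and the high-derivative factor in $\mathcal{D}_{N+2}$). For the velocity, magnetic, and surface terms this produces contributions of the form $\sqrt{\mathcal{D}_{N+2}}\sqrt{\mathcal{E}_{2N}\mathcal{D}_{N+2}}=\sqrt{\mathcal{E}_{2N}}\,\mathcal{D}_{N+2}$, which is exactly the bound required. I would then use Lemma \ref{lemm4.1} to pass from the $\mathbb{D}_\ma$ and $\nabla_\ma$ dissipations on the left to the standard $\mathbb{D}$ and $\nabla$ dissipations appearing in $\bar{\mathcal{D}}^0_{N+2}$, absorbing the $J-1$, $A$, $B$ error terms into $\sqrt{\mathcal{E}_{2N}}\,\mathcal{D}_{N+2}$.

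The main obstacle is the pressure interaction $\int_\Omega\partial^{\alpha_0}_t p\,F^2 J$ in the borderline case $\alpha_0=N+2$, because $\mathcal{D}_{N+2}$ controls $p$ only up to $N+1$ time derivatives. Here I would integrate by parts in time in a differential sense, writing
\begin{equation*}
\int_\Omega\partial^{N+2}_t p\,F^2 J=\frac{d}{dt}\!\int_\Omega\partial^{N+1}_t p\,F^2 J-\int_\Omega\partial^{N+1}_t p\,\partial_t(F^2 J),
\end{equation*}
which explains the $-2\int_\Omega\partial^{N+1}_t p\,F^2 J$ correction subtracted inside the modified energy in the lemma statement (the factor $2$ arising from symmetrizing after accounting for the lower-order $\alpha_0<N+2$ cases, exactly as in the derivation of Lemma \ref{lemm5.4}). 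The remainder $\int_\Omega\partial^{N+1}_t p\,\partial_t(F^2 J)$ is then bounded by $\l\partial^{N+1}_t p\r_0\,\l\partial_t(JF^2)\r_0\lesssim\sqrt{\mathcal{D}_{N+2}}\sqrt{\mathcal{E}_{2N}\mathcal{D}_{N+2}}$ via Lemma \ref{lemm5.1}. For the remaining cases $0\leq\alpha_0\leq N+1$ the pressure factor is directly controlled in $\mathcal{D}_{N+2}$, so no such correction is needed and a direct application of Lemma \ref{lemm5.1} suffices.

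Summing the resulting pointwise-in-$t$ estimates over $0\leq\alpha_0\leq N+2$ and rearranging the pressure correction into the left-hand energy yields the claimed inequality. The only delicate point to verify is that $\l\partial_t(JF^2)\r_0\lesssim\sqrt{\mathcal{E}_{2N}\mathcal{D}_{N+2}}$ at the $N+2$ level, which follows from the same $L^\infty/H^0$ split as in Lemma \ref{lemm5.1} together with the $\partial^{N+3}_t\ma$ bound from Lemma \ref{lemm4.2} applied with $n=N+2$.
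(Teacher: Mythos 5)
Your proposal is correct and is exactly the adaptation the paper intends: the paper omits this proof, saying only that it is ``similar to Lemma \ref{lemm5.4}'', and your argument reproduces that proof at the $N+2$ level in differential (rather than time-integrated) form, with the top-order pressure term $\int_\Omega\partial_t^{N+2}p\,F^2J$ moved under the time derivative as the correction $\int_\Omega\partial_t^{N+1}p\,F^2J$ and the remainder controlled via Lemma \ref{lemm5.1} (with $n=N+2$) and Lemma \ref{lemm4.2}. The only cosmetic quibble is the explanation of the factor $2$: it comes from matching the $\tfrac12$ factors in the identity of Lemma \ref{lemm3.1} to the definitions \eqref{5.1}--\eqref{5.2} of $\bar{\mathcal{E}}^0_{N+2}$, $\bar{\mathcal{D}}^0_{N+2}$, not from the lower-order cases, but this does not affect the validity of the argument.
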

\begin{proof}
The proof of Lemma \ref{lemm5.5} is similar to Lemma \ref{lemm5.4}. Here, for brevity, we omit the proof.
\end{proof}

\subsubsection{Energy Evolution of Horizontal derivatives}

In this subsection, we will show how the horizontal energies evolve at the $2N$ and $N+2$ level, respectively.
\begin{lemm}\label{lemm5.6}
Let $\alpha\in \mathbb{N}^{1+2}$, $0\leq \alpha_0\leq 2N-1$ and $|\alpha|\leq 4N$. Then, there exist a $\theta>0$ so that
\begin{equation}\label{5.16}
\bar{\mathcal{E}}_{2N} (t) + \int_{0}^{t} \bar{\mathcal{D}}_{2N} \lesssim{ \mathcal{E}}_{2N}(0)+
\int^t_0(\mathcal{E}_{2N})^{\theta}\mathcal{D}_{2N} + \int_{0}^{t}\sqrt{ \mathcal{D}_{2N}\mathcal{F}_{2N}\mathcal{E}_{N+2}}.
\end{equation}
\end{lemm}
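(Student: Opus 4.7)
The plan is to apply $\partial^\alpha$ with $\alpha\in\mathbb{N}^{1+2}$, $\alpha_0\leq 2N-1$, $|\alpha|\leq 4N$ (so only temporal and horizontal derivatives) to the perturbed linear formulation \eqref{3.13}, and observe that $(v,H,q,h)=(\partial^\alpha u,\partial^\alpha b,\partial^\alpha p,\partial^\alpha\eta)$ solves \eqref{3.14} with forcing terms $\Phi^{i}=\partial^\alpha G^{i}$ (here with $\sigma=0$ in the boundary condition). Then I would invoke Lemma~\ref{lemm3.2} and integrate in time from $0$ to $t$ to obtain, for each such $\alpha$,
\begin{equation*}
\bar{\mathcal{E}}_{\alpha}(t)+\int_0^t \bar{\mathcal{D}}_{\alpha}
= \bar{\mathcal{E}}_{\alpha}(0)+\int_0^t\!\!\int_\Omega \partial^\alpha u\cdot(\partial^\alpha G^{1}-\nabla\partial^\alpha G^{2})+\partial^\alpha p\,\partial^\alpha G^{2}+\partial^\alpha b\cdot\partial^\alpha G^{3}+\text{b.t.},
\end{equation*}
where the boundary term is $-\int_\Sigma \partial^\alpha u\cdot\partial^\alpha G^{4}+\int_\Sigma \partial^\alpha\eta\,\partial^\alpha G^{5}$. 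Summing over all admissible $\alpha$ produces exactly $\bar{\mathcal{E}}_{2N}(t)+\int_0^t\bar{\mathcal{D}}_{2N}$ on the left, and it remains to control the RHS in terms of $\mathcal{E}_{2N}(0)$ plus the advertised error terms.

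For the bulk of the multi-indices, namely those with $|\alpha|\leq 4N-1$, I would apply Cauchy--Schwarz directly. Each forcing term pairs with a factor controlled by $\sqrt{\bar{\mathcal{D}}_{2N}}\lesssim\sqrt{\mathcal{D}_{2N}}$ (for $\partial^\alpha u,\partial^\alpha b$, or $\partial^\alpha p$ of order $\leq 4N-1$), and the nonlinearity is controlled by the estimate \eqref{4.8} of Lemma~\ref{lemm5.2}, yielding $\sqrt{\mathcal{D}_{2N}}\cdot\sqrt{\mathcal{E}_{2N}^\theta\mathcal{D}_{2N}}$. On the boundary, standard trace and Sobolev embedding reduce everything to the same shape; after time integration this gives a contribution bounded by $\int_0^t \mathcal{E}_{2N}^\theta\mathcal{D}_{2N}$.

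For the top-level indices $|\alpha|=4N$ (which are precisely the $D\bar{D}^{2N-1}$ terms in $\bar{\mathcal{E}}_{2N}$, $\bar{\mathcal{D}}_{2N}$), I would write $\partial^\alpha=\partial_i\partial^\beta$ with $|\beta|=4N-1$ and $i\in\{1,2\}$, then integrate by parts (purely horizontally, so no boundary contributions from $\Sigma_{-1}$) to land $\partial_i$ on the nonlinearity: e.g.\ $\int_\Omega \partial^\alpha u\cdot\partial^\alpha G^{1}=-\int_\Omega \partial_i\partial^\alpha u\cdot\partial^\beta G^{1}$ (after a reshuffle), and similarly for the $G^{2}$, $G^{3}$, $G^{4}$ and $G^{5}$ contributions. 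Now I can invoke estimate \eqref{4.9} of Lemma~\ref{lemm5.2}, which gives precisely the decomposition $\|\nabla^{4N-1}G^{i}\|_0^2\lesssim \mathcal{E}_{2N}^\theta\mathcal{D}_{2N}+\mathcal{E}_{N+2}\mathcal{F}_{2N}$ (and the analogue for $D^{4N-1}$ on the boundary). Applying Cauchy--Schwarz with the energy/dissipation factor $\sqrt{\bar{\mathcal{D}}_{2N}}$ converts the first summand to $\mathcal{E}_{2N}^\theta\mathcal{D}_{2N}$ and the second to $\sqrt{\mathcal{D}_{2N}\mathcal{E}_{N+2}\mathcal{F}_{2N}}$, which is exactly the third term on the RHS of \eqref{5.16}.

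I expect the principal obstacle to be the top-level $|\alpha|=4N$ analysis, specifically the boundary contribution from $\partial^\alpha G^{4}$ and the divergence compensator $\nabla\partial^\alpha G^{2}$. Here one cannot afford to keep all $|\alpha|$ derivatives on a factor like $D\eta$ inside $G^{4}$, since $\mathcal{E}_{2N}$ only controls $\|\eta\|_{4N}$, not $\|\eta\|_{4N+1/2}$; the correct splitting --- putting the bulk of the derivatives on $\eta$ and measuring the other factor by $\mathcal{E}_{N+2}$, while the reserve of regularity $\mathcal{F}_{2N}$ absorbs the excess --- is delicate and is exactly what Lemma~\ref{lemm5.2} was engineered to provide. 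A secondary technical point is ensuring the integration by parts does not introduce lateral boundary terms (which is automatic because $\partial_i$ is tangential to $\Sigma$ and horizontally periodic), and that the pressure contribution $\nabla\partial^\alpha G^{2}$ is consistently handled by the same pairing, since $\partial^\alpha p$ with $|\alpha|=4N$ carries one more derivative than $\mathcal{D}_{2N}$ controls --- this is where I would integrate by parts once in the $\partial^\alpha p\,\partial^\alpha G^{2}$ term to move a horizontal derivative back to $p$ at the cost of using $\|\partial^\beta p\|_0\lesssim\sqrt{\mathcal{D}_{2N}}$.
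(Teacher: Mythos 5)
Your skeleton coincides with the paper's: apply $\partial^\alpha$ to the perturbed linear form \eqref{3.13} so that $(\partial^\alpha u,\partial^\alpha b,\partial^\alpha p,\partial^\alpha\eta)$ solves \eqref{3.14} with $\Phi^i=\partial^\alpha G^i$, use Lemma \ref{lemm3.2}, treat $|\alpha|\le 4N-1$ by direct Cauchy--Schwarz, and for $|\alpha|=4N$ peel off one horizontal derivative and integrate by parts so that Lemma \ref{lemm5.2} applies at order $4N-1$. However, there is a genuine gap in your treatment of the boundary term $\int_\Sigma\partial^\alpha\eta\,\partial^\alpha G^5$ in the critical case $\alpha_0=0$, $|\alpha|=4N$. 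Your plan pairs the $\eta$-factor with a ``dissipation factor'' after integration by parts, but neither $\partial^\alpha\eta$ (which needs $\|\eta\|_{4N+1/2}=\sqrt{\mathcal{F}_{2N}}$ to be placed in $H^{1/2}$) nor $\partial^{\alpha+\beta}\eta$ (which in $H^{-1/2}$ again costs $\sqrt{\mathcal{F}_{2N}}$) is controlled by $\mathcal{D}_{2N}$, and $\bar{\mathcal{D}}_{2N}$ contains no $\eta$ at all; $\mathcal{D}_{2N}$ only reaches $\|\eta\|_{2\cdot 2N-1/2}$. Pairing $\sqrt{\mathcal{F}_{2N}}$ with the bound \eqref{4.9} for $\|D^{4N-1}G^5\|_{1/2}$ produces terms of the type $\sqrt{\mathcal{F}_{2N}\,\mathcal{E}_{2N}^{\theta}\mathcal{D}_{2N}}$ and $\mathcal{F}_{2N}\sqrt{\mathcal{E}_{N+2}}$, which are not of the form allowed on the right of \eqref{5.16}; in particular the first is not time-integrable in the bootstrap, since $\mathcal{F}_{2N}$ is only controlled through $\mathcal{F}_{2N}(t)\lesssim(1+t)$. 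The paper closes this case by exploiting the transport structure of $G^5=-D\eta\cdot u$: after a Leibniz expansion, the leading term $u\cdot D\partial^\alpha\eta$ is handled by the antisymmetrization $\partial^\alpha\eta\,(u\cdot D\partial^\alpha\eta)=\tfrac12\,u\cdot D|\partial^\alpha\eta|^2$ followed by integration by parts on $\Sigma$, which splits the two copies of $\partial^\alpha\eta$ asymmetrically — one into $H^{1/2}$ (paid by $\sqrt{\mathcal{F}_{2N}}$) and one into $H^{-1/2}$ (paid by $\|D\eta\|_{4N-3/2}\lesssim\sqrt{\mathcal{D}_{2N}}$), with $\|Du\|_{L^\infty}\lesssim\sqrt{\mathcal{E}_{N+2}}$ — yielding exactly $\sqrt{\mathcal{D}_{2N}\mathcal{F}_{2N}\mathcal{E}_{N+2}}$. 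This structural cancellation, and the accompanying case split $\alpha_0\ge1$ versus $\alpha_0=0$ (for $\alpha_0\ge 1$ one does have $\|\partial^\alpha\eta\|_{1/2}\lesssim\sqrt{\mathcal{D}_{2N}}$), is the key idea missing from your proposal; \eqref{4.9} alone cannot deliver it.

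A secondary slip: you assert that $\partial^\alpha p$ with $|\alpha|=4N$ ``carries one more derivative than $\mathcal{D}_{2N}$ controls'' and propose an extra integration by parts in $\int_\Omega\partial^\alpha p\,\partial^\alpha G^2$. In the parabolic counting, $\alpha_0\le 2N-1$ forces the spatial order to be $4N-2\alpha_0$, so $\|\partial^\alpha p\|_0\le\|\partial_t^{\alpha_0}p\|_{4N-2\alpha_0}\lesssim\sqrt{\mathcal{D}_{2N}}$ directly; the paper keeps $\partial^\alpha p$ as is and bounds $\partial^\alpha G^2$ in $L^2$ by $\|\nabla^{4N-1}G^2\|_1$. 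Your proposed move would instead place $4N+1$ parabolic derivatives on $p$, which $\mathcal{D}_{2N}$ does not control; this is easily repaired by dropping the integration by parts in that term, but as written it would fail.
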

\begin{proof}
We apply $\partial^{\alpha}(\alpha\in\mathbb{N}^{1+2}$,$0\leq \alpha_0\leq 2N-1$ and $|\alpha|\leq 4N$) to \eqref{3.13} and set $v=\partial_t^{\alpha_0}u$, $q=\partial_t^{\alpha_0}p$, $H=\partial_t^{\alpha_0}b$, $h=\partial_t^{\alpha_0}\eta$ satisfying \eqref{3.14} with $\Phi^i=\partial^\alpha G_i~(i=1,\cdots 5)$. Then, according to Lemma \ref{lemm3.2} and integrating in time from 0 to $t$, we obtain
\begin{equation}\label{5.17}
\begin{aligned}
&\partial_t \int_{\Omega} \left(\frac{|\partial^{\alpha} u|^{2}}{2} + \frac{|\partial^{\alpha}\eta|^{2}}{2}+ \frac{|\partial^{\alpha}b|^{2}}{2}\right)+ \int_{\Omega}\left( \frac{|\mathbb{D} \partial^{\alpha} u|^{2}}{2}+  |\nabla \partial^{\alpha} b|^{2}\right) \\
&~~= \int_{\Omega} \partial^{\alpha} u \cdot (\partial^{\alpha} G^{1}-\nabla\partial^\alpha G^2) +\int_{\Omega} \partial^{\alpha} p \partial^{\alpha} G^{2} +\int_{\Omega} \partial^{\alpha}b\cdot \partial^{\alpha} G^{3}\\
&~~~~~~~~~+ \int_{\Sigma} (-\partial^{\alpha} u \cdot \partial^{\alpha} G^{4}+ \partial^{\alpha} \eta \cdot \partial^{\alpha} G^{5}).
\end{aligned}
\end{equation}

We first consider the case $0\leq \alpha_0\leq 2N-1$, $|\alpha|\leq 4N-1$. According to \eqref{4.7}, one has
\begin{align}\label{5.18}
&\int_{\Omega} \partial^{\alpha} u \cdot (\partial^{\alpha} G^{1}-\nabla\partial^\alpha G^2) +\int_{\Omega} \partial^{\alpha} p \partial^{\alpha} G^{2} +\int_{\Omega} \partial^{\alpha}b\cdot \partial^{\alpha} G^{3}\nonumber\\
\lesssim~& \|\partial^\alpha u\|_0(\|\partial^\alpha G^1\|_0+\|\partial^\alpha G^2\|_1)+\|\partial^\alpha b\|_0\|\partial^\alpha G^3\|_0+\|\partial^\alpha p\|_0\|\partial^\alpha G^2\|_0\\
\lesssim~&\sqrt{\mathcal{D}_{2N}}\sqrt{\mathcal{E}^\theta_{2N}\mathcal{D}_{2N}+\mathcal{E}_{N+2}\mathcal{F}_{2N}}
\lesssim~\mathcal{E}^{\theta/2}_{2N}\mathcal{D}_{2N}
+\sqrt{\mathcal{D}_{2N}\mathcal{F}_{2N}\mathcal{E}_{N+2}}.\nonumber
\end{align}

It follows from the trace estimate $\|\partial^\alpha u\|_{H^0(\Sigma)}\lesssim \|\partial^\alpha u\|_1\lesssim\sqrt{\mathcal{D}_{2N}}$ that
\begin{align}\label{5.19}
\int_\Sigma(-\partial^{\alpha} u& \cdot \partial^{\alpha} G^{4}+ \partial^{\alpha} \eta \cdot \partial^{\alpha} G^{5})
\lesssim~\|\partial^\alpha u\|_{H^0(\Sigma)}\|\partial^\alpha G^4\|_0+\|\partial^\alpha \eta\|_0\|\partial^\alpha G^5\|_0
\nonumber\\
\lesssim~&\sqrt{\mathcal{D}_{2N}}\sqrt{\mathcal{E}^\theta_{2N}\mathcal{D}_{2N}+\mathcal{E}_{N+2}\mathcal{F}_{2N}}
\lesssim~\mathcal{E}^{\theta/2}_{2N}\mathcal{D}_{2N}
+\sqrt{\mathcal{D}_{2N}\mathcal{F}_{2N}\mathcal{E}_{N+2}}.
\end{align}

For the other case $|\alpha|=4N$, since $\alpha_0\leq 2N-1$, we can write $\alpha=\beta+(\alpha-\beta)$, where
$\eta\in \mathbb{N}^2$ satisfies $|\beta|=1$. Hence, $\partial^\alpha$ involves at least one spatial derivative.
Since $|\alpha-\beta|=4N-1$, we can integrate by parts and using \eqref{4.7} to find that
\begin{align}\label{5.20}
&\left|\int_{\Omega} \partial^{\alpha} u \cdot (\partial^{\alpha} G^{1}-\nabla\partial^\alpha G^2)+\int_{\Omega} \partial^{\alpha}b\cdot \partial^{\alpha} G^{3}\right|+\left|\int_{\Omega} \partial^{\alpha} p \partial^{\alpha} G^{2}\right|\nonumber\\
\lesssim~& \left|\int_\Omega\partial^{\alpha+\beta} u\cdot\partial^{\alpha-\beta} G^1
+\partial^{\alpha+\beta} b\cdot\partial^{\alpha-\beta} G^3-\partial^{\alpha+\beta} u\cdot\nabla\partial^{\alpha-\beta} G^2\right|+\left|\int_\Omega\partial^\alpha p\partial^{\alpha-\beta +\beta}G^2\right|\\
\lesssim~& \l\partial^{\alpha+\beta} u\r_0\left(\l\partial^{\alpha-\beta} G^1\r_0+\l\partial^{\alpha-\beta} G^2\r_1\right)+\l\partial^{\alpha+\beta} b\r_0\l\partial^{\alpha-\beta} G^3\r_0
+\l\partial^\alpha p\r_0\l\bar\nabla ^{4N-1}G^2\r_1\nonumber\\
\lesssim~&\mathcal{E}^{\theta/2}_{2N}\mathcal{D}_{2N}
+\sqrt{\mathcal{D}_{2N}\mathcal{F}_{2N}\mathcal{E}_{N+2}}.\nonumber
\end{align}
Integrating by parts and using the trace theorem to find that
\begin{equation}\label{5.21}
\begin{aligned}
\int_{\Sigma} \partial^{\alpha}u \partial^{\alpha} G^{4} =&\left|\int_{\Sigma} \partial^{\alpha+\beta}u \partial^{\alpha-\beta} G^{4}\right|\lesssim \l\partial^{\alpha+\beta}u \r_{H^{-1/2}}\l \partial^{\alpha-\beta} G^{4}\r_{1/2}\\
\lesssim& \|\partial^{\alpha} u \|_{H^{1/2}(\Sigma)}\| \bar{D}^{4N-1} G^{4}\|_{1/2}
\lesssim\|\partial^\alpha u\|_1\| \bar{D}^{4N-1} G^{4}\|_{1/2}\\
\lesssim&\mathcal{E}^{\theta/2}_{2N}\mathcal{D}_{2N}
+\sqrt{\mathcal{D}_{2N}\mathcal{F}_{2N}\mathcal{E}_{N+2}}.
\end{aligned}
\end{equation}

For the term involves $\eta$, we need to apart it into two cases $\alpha_0\geq 1$ and $\alpha_0=0$. In the former case, there is at least one temporal derivative in $\partial^\alpha$. Thus, we have
\begin{equation*}
\|\partial^\alpha\eta\|_{1/2}\lesssim\|\partial^{\alpha-2}\partial_t\eta\|_{1/2}\lesssim\sqrt{\mathcal{D}_{2N}}.
\end{equation*}
Then, integrating by parts, one has
\begin{equation}\label{5.22}
\begin{aligned}
\int_{\Sigma} \partial^{\alpha}\eta \partial^{\alpha} G^{5} \lesssim&\left|\int_{\Sigma} \partial^{\alpha+\beta} \eta \partial^{\alpha-\beta} G^{5}\right|\lesssim \l\partial^{\alpha+\beta} \eta \r_{-1/2}\l \partial^{\alpha-\beta} G^{5}\r_{1/2}\\
\lesssim& \|\partial^{\alpha} \eta \|_{1/2}\| \partial^{\alpha-\beta} G^{5}\|_{1/2}\lesssim~\mathcal{E}^{\theta/2}_{2N}\mathcal{D}_{2N}
+\sqrt{\mathcal{D}_{2N}\mathcal{F}_{2N}\mathcal{E}_{N+2}}.
\end{aligned}
\end{equation}

In the other case $\alpha_0=0$, $\partial^\alpha$ involves only spatial derivatives, We need to analyze in detail. Firstly, we denote
\begin{align*}
-\partial^\alpha G^5=\partial^\alpha(D\eta\cdot u)=&D\partial^\alpha\cdot u+\sum_{0<\beta\leq\alpha,|\beta|=1}C_{\alpha,\beta}D\partial^{\alpha-\beta}\eta\cdot\partial^\beta u\\
&+\sum_{0<\beta\leq\alpha,|\beta|\geq2}C_{\alpha,\beta}D\partial^{\alpha-\beta}\eta\cdot\partial^\beta u\\
=&I_1+I_2+I_3.
\end{align*}
By integrating by parts, one has
\begin{align*}
\left|\int_\Sigma \partial ^\alpha\eta I_1\right|=&\frac{1}{2}\left|\int_\Sigma D|\partial^\alpha\eta|^2\cdot u\right|
=\frac{1}{2}\left|\int_\Sigma \partial^\alpha\eta \partial^\alpha\eta(\partial_1u_1+\partial_2u_2)\right|\\
\lesssim&\l\partial^\alpha\eta\r_{1/2}\l\partial^\alpha\eta\r_{-1/2}\|\partial_1u_1+\partial_2u_2\|_{L^\infty}\\
\lesssim&\|\eta\|_{4N+1/2}\|D\eta\|_{4N-3/2}\mathcal{E}_{N+2}\\
\lesssim&\sqrt{\mathcal{D}_{2N}\mathcal{F}_{2N}\mathcal{E}_{N+2}}.
\end{align*}
Similarly, for the estimate of $I_2$, we have
\begin{equation*}
\left|\int_\Sigma \partial ^\alpha\eta I_2\right|\lesssim\sqrt{\mathcal{D}_{2N}\mathcal{F}_{2N}\mathcal{E}_{N+2}}.
\end{equation*}
Finally, for $I_3$, we find that
\begin{align*}
\left|\int_\Sigma \partial ^\alpha\eta I_3\right|\lesssim \|\partial^\alpha\eta\|_{-1/2}\|D\partial^{\alpha-\beta\eta}\cdot\partial^\beta u\|_{H^{1/2}(\Sigma)}\lesssim\sqrt{\mathcal{D}_{2N}}\sqrt{\mathcal{E}_{2N}
\mathcal{D}_{2N}}=\sqrt{\mathcal{E}_{2N}}\mathcal{D}_{2N}.
\end{align*}
Thus, we deduce
\begin{equation}\label{5.23}
\left|\int_\Sigma \partial ^\alpha\eta \partial^\alpha G^5\right|\lesssim
\sqrt{\mathcal{E}_{2N}}\mathcal{D}_{2N}+\sqrt{\mathcal{D}_{2N}\mathcal{F}_{2N}\mathcal{E}_{N+2}}.
\end{equation}
Bring the estimates \eqref{5.18}-\eqref{5.23} into \eqref{5.17}, we conclude
\begin{equation}\label{5.24}
\begin{aligned}
\partial_t \int_{\Omega}& \left(\frac{|\partial^{\alpha} u|^{2}}{2} + \frac{|\partial^{\alpha}\eta|^{2}}{2}+ \frac{|\partial^{\alpha}b|^{2}}{2}\right)+ \int_{\Omega}\left( \frac{|\mathbb{D} \partial^{\alpha} u|^{2}}{2}+  |\nabla \partial^{\alpha} b|^{2}\right)\\
 &~~~~~~~~~\lesssim (\mathcal{E}_{2N})^{\theta/2}\mathcal{D}_{2N}+\sqrt{\mathcal{D}_{2N}\mathcal{F}_{2N}\mathcal{E}_{N+2}},
\end{aligned}
\end{equation}
and then \eqref{5.16} follows from \eqref{5.24}.
\end{proof}

Similar to the estimates in Lemma \ref{lemm5.6}, by using \eqref{4.9}, we can obtain the horizontal energies estimates
corresponding estimate at the $N+2$ level, namely,
\begin{lemm}\label{lemm5.7}
Let $\alpha\in\mathbb{N}^{1+2}$ satisfy $\alpha_0\leq N+1$ and $|\alpha|\leq 2(N+2)$. Then,
\begin{equation}
\partial_t(\l\partial^\alpha u\r^2_0+\l\partial^\alpha b\r^2_0+\l\partial^\alpha \eta\r^2_0)+\|\mathbb{D}\partial^\alpha u\|^2_0+\|\nabla\partial^\alpha b\|^2_0\lesssim (\mathcal{E}_{2N})^{\theta/2}\mathcal{D}_{N+2}.
\end{equation}
Furthermore, we deduce
\begin{equation}
\partial_t\bar{\mathcal{E}}_{N+2}+\bar{\mathcal{D}}_{N+2}\lesssim (\mathcal{E}_{2N})^{\theta/2}\mathcal{D}_{N+2}.
\end{equation}
\end{lemm}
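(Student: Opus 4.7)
The plan is to run the same argument as in Lemma \ref{lemm5.6}, but based on the cleaner $G^i$ bounds of Lemma \ref{lemm5.3} at the $N+2$ level (which, unlike (4.9), contain no $\mathcal{F}_{2N}$ term). Accordingly, the final estimate will not require a $\sqrt{\mathcal{D}_{N+2}\mathcal{F}_{2N}\mathcal{E}_{N+2}}$ correction. First, for a multi-index $\alpha\in\mathbb{N}^{1+2}$ with $\alpha_0\leq N+1$ and $|\alpha|\leq 2(N+2)$, apply $\partial^\alpha$ to the perturbed linear form \eqref{3.13}, so that $(v,H,q,h):=(\partial^\alpha u,\partial^\alpha b,\partial^\alpha p,\partial^\alpha\eta)$ solves \eqref{3.14} with $\Phi^i=\partial^\alpha G^i$ and $\sigma=0$. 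Lemma \ref{lemm3.2} then yields the differential identity
\begin{equation*}
\partial_t\!\left(\tfrac12\|\partial^\alpha u\|_0^2+\tfrac12\|\partial^\alpha b\|_0^2+\tfrac12\|\partial^\alpha\eta\|_0^2\right)+\tfrac12\|\mathbb{D}\partial^\alpha u\|_0^2+\|\nabla\partial^\alpha b\|_0^2 = I_\Omega+I_\Sigma,
\end{equation*}
with $I_\Omega$, $I_\Sigma$ the interior and boundary interaction integrals appearing in (5.17).

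Next, split according to the size of $|\alpha|$. When $|\alpha|\leq 2(N+2)-1$, I would bound $I_\Omega$ and $I_\Sigma$ directly by Cauchy--Schwarz together with the trace inequality $\|\partial^\alpha u\|_{H^0(\Sigma)}\lesssim\|\partial^\alpha u\|_1\lesssim\sqrt{\mathcal{D}_{N+2}}$, and invoke \eqref{4.10} to estimate each $\|\partial^\alpha G^i\|$-factor by $\sqrt{\mathcal{E}_{2N}^\theta\,\mathcal{E}_{N+2}}\lesssim \sqrt{\mathcal{E}_{2N}^\theta\,\mathcal{D}_{N+2}}$. This already gives the desired control by $\mathcal{E}_{2N}^{\theta/2}\mathcal{D}_{N+2}$. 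When $|\alpha|=2(N+2)$, the constraint $\alpha_0\leq N+1$ forces $\alpha_1+\alpha_2\geq 2$, so I can split $\alpha=(\alpha-\beta)+\beta$ with $|\beta|=1$ a purely horizontal index. Integration by parts in $\beta$ shifts one derivative back onto $\partial^\alpha u$ (respectively $\partial^\alpha b$, $\partial^\alpha p$), yielding integrals of the form $\int_\Omega\partial^{\alpha+\beta}u\cdot\partial^{\alpha-\beta}G^1$, etc. These are bounded through \eqref{4.11}, since $|\alpha-\beta|=2(N+2)-1$ falls exactly in the range covered by that estimate; the boundary contribution $\int_\Sigma\partial^\alpha u\cdot\partial^\alpha G^4$ is handled analogously via the trace bound $\|\partial^\alpha u\|_{H^{1/2}(\Sigma)}\lesssim\|\partial^\alpha u\|_1\lesssim\sqrt{\mathcal{D}_{N+2}}$ and $\|\partial^{\alpha-\beta}G^4\|_{1/2}\lesssim\sqrt{\mathcal{E}_{2N}^\theta\mathcal{D}_{N+2}}$.

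The step I expect to be the main obstacle is the top-order surface term $\int_\Sigma \partial^\alpha\eta\,\partial^\alpha G^5$ when $\alpha_0=0$ and $|\alpha|=2(N+2)$, because then $\partial^\alpha$ is purely horizontal and the worst piece of $\partial^\alpha G^5=\partial^\alpha(D\eta\cdot u)$ is $D\partial^\alpha\eta\cdot u$, whose naive bound asks for one more derivative on $\eta$ than the $\mathcal{D}_{N+2}$ norm supplies. As in the proof of Lemma \ref{lemm5.6}, I would decompose $-\partial^\alpha G^5=I_1+I_2+I_3$ and treat $I_1$ by rewriting $D|\partial^\alpha\eta|^2\cdot u$ and integrating by parts on $\Sigma$ to get $\tfrac12\int_\Sigma|\partial^\alpha\eta|^2(\partial_1 u_1+\partial_2 u_2)$. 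The crucial observation particular to the $N+2$-level is that $N\geq 3$ gives $|\alpha|+\tfrac12=2(N+2)+\tfrac12\leq 4N$, so $\|\partial^\alpha\eta\|_{1/2}\lesssim\|\eta\|_{4N}\lesssim\sqrt{\mathcal{E}_{2N}}$ without appealing to $\mathcal{F}_{2N}$, while $\|\partial_1 u_1+\partial_2 u_2\|_{L^\infty}$ is absorbed into $\sqrt{\mathcal{D}_{N+2}}$ by Sobolev embedding. The terms $I_2,I_3$ have at least one derivative moved off $\eta$ and are routine.

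Finally, summing the per-$\alpha$ bound over all admissible multi-indices in the definitions \eqref{5.3}--\eqref{5.4} of $\bar{\mathcal{E}}_{N+2}$ and $\bar{\mathcal{D}}_{N+2}$ produces the inequality $\partial_t\bar{\mathcal{E}}_{N+2}+\bar{\mathcal{D}}_{N+2}\lesssim \mathcal{E}_{2N}^{\theta/2}\mathcal{D}_{N+2}$, completing the proof.
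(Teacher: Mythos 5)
Your overall route is the paper's intended one: the paper proves Lemma \ref{lemm5.7} only by remarking that it is ``similar to Lemma \ref{lemm5.6}'' with the $N+2$--level nonlinear estimates of Lemma \ref{lemm5.3} replacing the $2N$--level ones, and your treatment of the top--order case $|\alpha|=2(N+2)$ (horizontal integration by parts, the trace bound for the $G^4$ term, and the symmetrization of the worst piece of $\partial^\alpha G^5$ when $\alpha_0=0$, using $\|\partial^\alpha\eta\|_{1/2}\lesssim\|\eta\|_{2(N+2)+1/2}\lesssim\|\eta\|_{4N}\lesssim\sqrt{\mathcal{E}_{2N}}$ for $N\geq3$, so that no $\mathcal{F}_{2N}$ appears) is exactly the detail that makes the $N+2$ analogue work; that part is correct and is in fact more explicit than the paper.

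There is, however, a genuine flaw in your intermediate case $|\alpha|\leq 2(N+2)-1$. You bound the $G^i$ factors by \eqref{4.10} and then write $\sqrt{\mathcal{E}_{2N}^{\theta}\mathcal{E}_{N+2}}\lesssim\sqrt{\mathcal{E}_{2N}^{\theta}\mathcal{D}_{N+2}}$, i.e.\ you use $\mathcal{E}_{N+2}\lesssim\mathcal{D}_{N+2}$. This inequality is false in this framework: comparing \eqref{2.1} and \eqref{2.2}, the energy $\mathcal{E}_{N+2}$ contains $\|\eta\|^2_{2(N+2)}$ while the dissipation $\mathcal{D}_{N+2}$ controls only $\|\eta\|^2_{2(N+2)-1/2}$, half a derivative less, and on the torus the higher norm cannot be bounded by the lower one. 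Indeed, the failure of $\mathcal{E}_{N+2}\lesssim\mathcal{D}_{N+2}$ is precisely why the paper must interpolate, $\mathcal{E}_{N+2}\lesssim\mathcal{D}_{N+2}^{\theta}\mathcal{E}_{2N}^{1-\theta}$ in Proposition \ref{prop6.3}, and why the decay is only almost exponential rather than exponential; an argument that assumed $\mathcal{E}_{N+2}\lesssim\mathcal{D}_{N+2}$ would prove too much. In addition, \eqref{4.10} is the wrong estimate for this range anyway: it only covers parabolic derivative counts up to $2(N+2)-2$ (so it misses $|\alpha|=2(N+2)-1$) and it contains no bound on $G^5$, which you need for the boundary term $\int_\Sigma\partial^\alpha\eta\,\partial^\alpha G^5$. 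The repair is immediate and is what the paper intends: for all $|\alpha|\leq 2(N+2)-1$ (and, after integrating by parts, for $|\alpha|=2(N+2)$) estimate every $G^i$ factor directly by the dissipation--type bound \eqref{4.11}, which gives $\mathcal{E}_{2N}^{\theta}\mathcal{D}_{N+2}$ without any detour through $\mathcal{E}_{N+2}$. With that substitution your argument closes as stated.
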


\subsubsection{Energy improvement}
In this subsection, we will show that, up to some error terms, the total energy $\mathcal{E}_{n}$ can be controlled by $\bar{\mathcal{E}}_n+\bar{\mathcal{E}}^0_n$ and the total dissipation $\mathcal{D}_{2N}$ can be bounded by $\bar{\mathcal{D}}_n+\bar{\mathcal{D}}^0_n$, respectively.

\begin{lemm}\label{lemm5.8}
There exists a $\theta>0$ so that
\begin{equation}\label{5.27}
\mathcal{E}_{2N}\lesssim\bar{\mathcal{E}}_{2N}+\bar{\mathcal{E}}^0_{2N}+(\mathcal{E}_{2N})^{\theta+1},
\end{equation}
and
\begin{equation}\label{5.28}
\mathcal{E}_{N+2}\lesssim\bar{\mathcal{E}}_{N+2}+\bar{\mathcal{E}}^0_{N+2}+(\mathcal{E}_{2N})^{\theta}\mathcal E_{N+2}.
\end{equation}
\end{lemm}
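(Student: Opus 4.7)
The plan is to bootstrap from $\bar{\mathcal{E}}_n+\bar{\mathcal{E}}^0_n$, which controls only horizontal and pure-temporal derivatives, up to the full Sobolev norms inside $\mathcal{E}_n$ by treating the perturbed linear system \eqref{3.13} as a chain of stationary elliptic problems. The vertical regularity will be recovered by a downward induction on the number of pure time derivatives $j$, using Stokes estimates for $(u,p)$ and the standard heat elliptic estimate for $b$, with $\eta$-norms supplied directly by the horizontal energy because $\eta$ lives on the two-dimensional $\Sigma$.

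First I would handle \eqref{5.27}. At $j=2N$ the pieces $\|\partial^{2N}_t u\|_0,\|\partial^{2N}_t b\|_0,\|\partial^{2N}_t\eta\|_0$ already sit inside $\bar{\mathcal{E}}^0_{2N}$. For $0\le j<2N$, apply $\partial^j_t$ to \eqref{3.13} and read off a stationary Stokes problem for $(\partial^j_t u,\partial^j_t p)$ with body force $-\partial^{j+1}_t u+\partial^j_t G^1$, divergence source $\partial^j_t G^2$, no-slip on $\Sigma_{-1}$, and stress data $\partial^j_t\eta\,e_3+\partial^j_t G^4$ on $\Sigma$. Lemma~\ref{lemmA.2} then yields
\begin{equation*}
\l\partial^j_t u\r_{4N-2j}+\l\partial^j_t p\r_{4N-2j-1}\lesssim\l\partial^{j+1}_t u\r_{4N-2j-2}+\l\partial^j_t\eta\r_{4N-2j-3/2}+\text{nonlinear},
\end{equation*}
and the heat equation for $\partial^j_t b$ is handled identically via Lemma~\ref{lemmA.1}. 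The required $\eta$-norms involve only horizontal derivatives and are furnished by $\bar{\mathcal{E}}_{2N}$; the zero-average condition \eqref{1.3} together with Poincar\'e upgrades $\|D\eta\|$-control to full $\|\eta\|$-control at every order up to $4N$. Iterating the induction collapses the chain to the base case plus the horizontal inputs, and Lemma~\ref{lemm5.2} dominates every $\partial^j_t G^i$ term by $\mathcal{E}_{2N}^{1+\theta}$, giving \eqref{5.27}.

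Estimate \eqref{5.28} is obtained by the same induction at the $N+2$ level, with the nonlinear bounds supplied by Lemma~\ref{lemm5.3} instead. The key bookkeeping is that every quadratic $G^i\sim XY$ is split so that the low-derivative factor $X$ is placed in an $L^\infty$-type Sobolev bound at the $2N$ regularity, producing a factor $\mathcal{E}_{2N}^{\theta}$, while the high-derivative factor $Y$ is measured at the $N+2$ regularity, producing a factor bounded by $\mathcal{E}_{N+2}$; this is exactly what yields the error $\mathcal{E}_{2N}^{\theta}\mathcal{E}_{N+2}$ rather than $\mathcal{E}_{N+2}^{1+\theta}$, which would be too weak to be absorbed on the left-hand side.

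The hard part is closing the chain at the top order $j=0$, where $\eta$ appears in the Stokes stress data at maximal regularity. One has to check that $\|\eta\|_{4N-1/2}$ is controlled by $\bar{\mathcal{E}}_{2N}$, which holds because of the horizontal piece $\|D\bar D^{4N-1}\eta\|_0$, and that $\|G^4\|_{4N-3/2}$ has its highest-derivative factor (the one involving $\eta$ through $\mathcal N$ and $M$) absorbed into the $\mathcal{E}_{N+2}\mathcal{F}_{2N}$ contribution of Lemma~\ref{lemm5.2} rather than the $\mathcal{E}_{2N}^{1+\theta}$ contribution; the $\mathcal{F}_{2N}$ piece is then bounded by the smallness of $\mathcal{E}_{2N}$ via the total energy $\mathcal{G}_{2N}$ when combined with the forthcoming integration in time. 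Making this split tight, without losing a half-derivative at the top level, is the delicate point of the argument.
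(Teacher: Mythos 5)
Your main argument is exactly the paper's proof: control the $\eta$-terms and the top temporal derivatives directly from $\bar{\mathcal{E}}_n+\bar{\mathcal{E}}^0_n$, then run a downward induction in the number of time derivatives, using the Stokes estimate (Lemma \ref{lemmA.2}) for $(\partial_t^ju,\partial_t^jp)$ with stress data $\partial_t^j\eta\,e_3+\partial_t^jG^4$ and the elliptic estimate (Lemma \ref{lemmA.1}) for $\partial_t^jb$, and bound the nonlinearities by Lemma \ref{lemm5.2} at the $2N$ level and Lemma \ref{lemm5.3} at the $N+2$ level. That part is correct and coincides with the paper.

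Your closing paragraph, however, misidentifies the ``delicate point'' and proposes a fix that would not prove the stated lemma. At the energy level the Stokes data only require $\|\partial_t^jG^4\|_{2n-2j-3/2}$ for $j\le n-1$ (and $\|\eta\|_{4N-3/2}$ at $j=0$, not $\|\eta\|_{4N-1/2}$); these are precisely the norms covered by $\|\bar{D}^{4N-2}_0G^4\|_{1/2}^2\lesssim\mathcal{E}_{2N}^{1+\theta}$ in \eqref{4.7}, so no $\mathcal{E}_{N+2}\mathcal{F}_{2N}$ contribution ever enters; that term belongs to the one-derivative-higher, dissipation-level estimate \eqref{4.9}, which is used in Lemma \ref{lemm5.9}, not here. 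Moreover, your proposed remedy --- absorbing an $\mathcal{F}_{2N}$ piece ``via the smallness of $\mathcal{G}_{2N}$ combined with the forthcoming integration in time'' --- is incompatible with \eqref{5.27}: the lemma is a pointwise-in-time inequality whose right-hand side contains neither $\mathcal{F}_{2N}$ nor a time integral, $\mathcal{F}_{2N}$ itself is not small (only $\mathcal{F}_{2N}(t)/(1+t)$ is controlled in \eqref{2.5}, so $\mathcal{F}_{2N}$ may grow linearly in $t$), and Lemma \ref{lemm5.8} is an ingredient of the later time-integrated bootstrap, so invoking that bootstrap inside its proof would be circular. Once you replace that paragraph by the observation that \eqref{4.7} (respectively \eqref{4.10}) already bounds every $G^1,G^2,G^3,G^4$ norm appearing in the induction by $\mathcal{E}_{2N}^{1+\theta}$ (respectively $\mathcal{E}_{2N}^{\theta}\mathcal{E}_{N+2}$), with no half-derivative loss and no $G^5$ needed, the proof is complete and matches the paper.
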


\begin{proof}
We let $n$ denote either $2N$ or $N+2$ throughout the proof, and we define
\begin{equation*}
W_n=\sum^{n-1}_{j=0}\left(\l\partial^j_tG^1\r^2_{2n-2j-2}+\l\partial^j_tG^2\r^2_{2n-2j-1}
+\l\partial^j_tG^3\r^2_{2n-2j-2}+\l\partial^j_tG^4\r^2_{2n-2j-3/2}\right)
\end{equation*}
According to the definitions of $\bar{\mathcal{E}}^0_n$ and $\bar{\mathcal{E}}_n$, we know
\begin{equation}\label{5.29}
\l\partial^n_t u\r^2_0+\l\partial^n_t b\r^2_0+\sum^n_{j=0}\l\partial^j_t\eta\r^2_{2n-2j}\lesssim \bar{\mathcal E}^0_n+\bar{\mathcal{E}}_n.
\end{equation}
To control $u$ and $P$, we will apply the standard Stokes estimates. According to \eqref{3.13} we have the form
\begin{equation}\label{5.30}
\left\{
\begin{aligned}
&-\Delta u+ \nabla p=-\partial_{t} u+ G^{1},~~~~~~~~~~~~~~~~~~~~~~~~~~{\rm in}~\Omega~ \\
&{\rm div} u=G^{2},~~~~~~~~~~~~~~~~~~~~~~~~~~~~~~~~~~~~~~~~~~~~~~{\rm in}~\Omega~ \\
&(pI-\mathbb{D} u)e_{3}=\eta e_3+G^{4},~~~~~~~~~~~~~~~~~~~~~~~~~~~~{\rm on}~\Sigma~\\
&u=0 ~~~~~~~~~~~~~~~~~~~~~~~~~~~~~~~~~~~~~~~~~~~~~~~~~~~~~ {\rm on}~\Sigma_{-1}.
\end{aligned}
\right.
\end{equation}
Then we apply $\partial^j_t~(j=0,1,\cdots,n-1)$ to \eqref{5.30} and we use Lemma \ref{lemmA.2} to find that
\begin{equation} \label{5.31}
\begin{aligned}
\l\partial^{j}_{t} u\r_{2n-2j}^{2}+\l\partial^{j}_{t} p\r_{2n-2j-1}^{2} &\lesssim \l\partial_{t}^{j+1} u\r_{2n-2j-2}^{2}+ \l\partial^{j}_{t}G^{1}\r_{2n-2j-2}^{2}+ \l\partial^{j}_{t}G^{2}\r_{2n-2j-1}^{2}\\
&~~~~+ \l\partial^{j}_{t}\eta\r_{2n-2j-3/2}^{2} + \l\partial^{j}_{t}G^{4}\r_{2n-2j-3/2}^{2}\\
&\lesssim \l \partial_{t}^{j+1} u \r^{2}_{2n-2(j+1)}+\bar{\mathcal{E}}_n+\bar{\mathcal{E}}^0_n+ \mathcal{W}_{n}.
\end{aligned}
\end{equation}
To control $b$, we will apply the standard elliptic estimate, and according to \eqref{3.13} $b$ satisfies
\begin{equation}\label{5.32}
\left\{
\begin{aligned}
&-\Delta b=\partial_t b+G^3, ~~~~~~~~~~~~~~~~~~~~~~~~~~{\rm in}~\Omega~ \\
&b=0,~~~~~~~~~~~~~~~~~~~~~~~~~~~~~~~~~~~~~~~~~~{\rm on}~\Sigma\cup\Sigma_{-1}.~ \\
\end{aligned}
\right.
\end{equation}
Then, applying $\partial^j_t$ to \eqref{5.32} and using Lemma \ref{lemmA.1}, one has
\begin{equation} \label{5.33}
\begin{aligned}
\l\partial^{j}_{t} b\r_{2n-2j}^{2} &\lesssim \l\partial_{t}^{j+1}b\r_{2n-2j-2}^{2}+ \l\partial^{j}_{t}G^{3}\r_{2n-2j-2}^{2}\\
&\lesssim \l \partial_{t}^{j+1} b \r^{2}_{2n-2(j+1)}+ \mathcal{W}_{n}.
\end{aligned}
\end{equation}
Combining \eqref{5.31} and \eqref{5.33}, we use the estimates obtained in \eqref{4.7} and \eqref{4.10} to obtain
\begin{equation} \label{5.34}
\begin{aligned}
\l\partial^{j}_{t} u\r_{2n-2j}^{2}&+\l\partial^{j}_{t} p\r_{2n-2j-1}^{2}+\l\partial^{j}_{t} b\r_{2n-2j}^{2}\\
\lesssim &\l\partial_{t}^{j+1} u\r_{2n-2(j+1)}^{2}+ \l \partial_{t}^{j+1} b \r^{2}_{2n-2(j+1)}+\bar{\mathcal{E}}_n+\bar{\mathcal{E}}^0_n+ \mathcal{W}_{n}.
\end{aligned}
\end{equation}
After a simple induction on \eqref{5.34}, we yield that
\begin{equation}\label{5.35}
\begin{aligned}
\sum^{n-1}_{j=0}&\left(\l\partial^j_tu\r^2_{2n-2j}+\l\partial^{j}_{t} p\r_{2n-2j}^{2}+\l\partial^{j}_{t} b\r_{2n-2j}^{2}\right)\\
&\lesssim \bar{\mathcal{E}}_n+\bar{\mathcal{E}}^0_n+ \mathcal{W}_{n}+\l\partial^n_t u\r^2_0+\l\partial^n_tb\r^2_0\\
&\lesssim\bar{\mathcal{E}}_n+\bar{\mathcal{E}}^0_n+ \mathcal{W}_{n}.
\end{aligned}
\end{equation}
Thus, it follows from \eqref{5.29} and \eqref{5.35} that
\begin{equation*}
\mathcal{E}_n\lesssim\bar{\mathcal{E}}_n+\bar{\mathcal{E}}^0_n+ \mathcal{W}_{n}.
\end{equation*}
Finally, for $n=2N$, we employ \eqref{4.7} to bound $\mathcal W_{2N}\lesssim \mathcal E^{1+\theta}_{2N}$. Thus, the estimate and \eqref{5.35} imply \eqref{5.27}. Similarly, for $n=N+2$, we employ \eqref{4.10} to bound $\mathcal W_{N+2}\lesssim \mathcal E^{\theta}_{2N}\mathcal E_{N+2}$. Hence, the estimate and \eqref{5.35} imply \eqref{5.28}.

\end{proof}

\subsubsection{Dissipation improvement}
\begin{lemm}\label{lemm5.9}
There exists a $\theta>0$, so that
\begin{equation}\label{5.36}
\mathcal{D}_{2N}\lesssim \bar{\mathcal{D}}_{2N}+\bar{\mathcal{D}}_{2N}^{0}+\mathcal{F}_{2N}\mathcal{E}_{N+2}+\mathcal{E}_{2N}^{\theta}\mathcal{D}_{2N},
\end{equation}
and
\begin{equation}\label{5.37}
\mathcal{D}_{N+2}\lesssim \bar{\mathcal{D}}_{N+2}+\bar{\mathcal{D}}_{N+2}^{0}+\mathcal{E}_{2N}^{\theta}\mathcal{D}_{N+2}.
\end{equation}
\end{lemm}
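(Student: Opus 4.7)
The plan is to parallel the energy improvement Lemma \ref{lemm5.8}, but now at one full spatial derivative higher, by combining a downward induction on the number of time derivatives with the Stokes estimates of Lemmas \ref{lemmA.2}--\ref{A.3} for $(u,p)$ and the elliptic estimate of Lemma \ref{lemmA.1} for $b$, applied to the perturbed linear form \eqref{3.13}. The contributions of $\eta$ to $\mathcal{D}_n$ are recovered separately from the boundary conditions, and the nonlinear forcings are then absorbed via the estimates of Lemmas \ref{lemm5.2}--\ref{lemm5.3}.

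The first step is to handle $\eta$. For $j\geq 0$, applying $\partial_t^j$ to the kinematic condition $\partial_t\eta = u_3+G^5$ and invoking the trace theorem yields $\l\partial_t^{j+1}\eta\r_{2n-2j-1/2}^2\lesssim \l\partial_t^j u\r_{2n-2j}^2+\l\partial_t^j G^5\r_{2n-2j-1/2}^2$, which covers every $\partial_t^k\eta$ with $k\geq 2$ appearing in $\mathcal{D}_n$ once $u$ has been controlled. To capture the purely spatial $\l\eta\r_{2n-1/2}^2$ (only relevant at the $2N$ level), use the normal component of the dynamic condition in \eqref{3.13}, which since $\sigma=0$ can be rearranged as $\eta = p-2\partial_3u_3-G^4\cdot e_3$ on $\Sigma$, so that by trace
\begin{equation*}
\l\eta\r_{2n-1/2}^2 \lesssim \l p\r_{2n}^2 + \l u\r_{2n+1}^2 + \l G^4\r_{2n-1/2}^2.
\end{equation*}
A parallel argument using $\partial_t\eta=u_3+G^5$ gives $\l\partial_t\eta\r_{2n-1/2}^2\lesssim\l u\r_{2n}^2+\l G^5\r_{2n-1/2}^2$.

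Next I would run the downward induction on $j$. For each $0\leq j\leq n-1$, applying $\partial_t^j$ to the Stokes problem made of the first, second, fourth and last equations of \eqref{3.13} and invoking Lemma \ref{A.3} at level $r=2n-2j+1$ produces
\begin{equation*}
\l\partial_t^j u\r_{2n-2j+1}^2+\l\nabla\partial_t^j p\r_{2n-2j-1}^2 \lesssim \l\partial_t^{j+1}u\r_{2n-2j-1}^2 + \l\partial_t^j G^1\r_{2n-2j-1}^2 + \l\partial_t^j G^2\r_{2n-2j}^2 + \l\partial_t^j u\r_{H^{2n-2j+1/2}(\Sigma)}^2,
\end{equation*}
where the boundary norm of $\partial_t^j u$ is controlled by $\bar{\mathcal{D}}_n$ since the horizontal spatial traces on $\Sigma$ are built into \eqref{5.4}. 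Combining with Lemma \ref{lemmA.2} and using the $\eta$ bound above to control the boundary data $\eta e_3+G^4$ then upgrades $\nabla\partial_t^j p$ to $\partial_t^j p$ at level $2n-2j$. For $b$, applying $\partial_t^j$ to $-\Delta b=-\partial_t b+G^3$ with $b|_{\Sigma\cup\Sigma_{-1}}=0$ and using Lemma \ref{lemmA.1} gives $\l\partial_t^j b\r_{2n-2j+1}^2\lesssim\l\partial_t^{j+1}b\r_{2n-2j-1}^2+\l\partial_t^j G^3\r_{2n-2j-1}^2$. One inducts downward starting from the base case $\l\partial_t^n u\r_1^2+\l\partial_t^n b\r_1^2\lesssim\bar{\mathcal{D}}_n^0$, sums over $j$, and invokes Lemma \ref{lemm5.2} at the $2N$ level and Lemma \ref{lemm5.3} at the $N+2$ level to bound the $G^i$ terms.

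The hard part will be the top-derivative case $j=0$, $n=2N$, where the Stokes estimate requires $\l G^1\r_{4N-1}$, $\l G^2\r_{4N}$ and $\l G^4\r_{4N-1/2}$: these are exactly the norms whose bounds in \eqref{4.9} are forced to pick up the cross term $\mathcal{F}_{2N}\mathcal{E}_{N+2}$, which reflects the regularity loss for $\eta$ in the absence of surface tension and is the source of the $\mathcal{F}_{2N}\mathcal{E}_{N+2}$ summand on the right side of \eqref{5.36}. At the $N+2$ level the analogous bound \eqref{4.11} avoids this cross term, which is why \eqref{5.37} contains only $\mathcal{E}_{2N}^{\theta}\mathcal{D}_{N+2}$. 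The residual $\mathcal{E}_{2N}^{\theta}\mathcal{D}_n$ contributions on the right are harmless, as they will be absorbed into the left after the usual smallness assumption on $\mathcal{E}_{2N}$ is imposed in the continuation.
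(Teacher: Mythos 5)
Your overall skeleton is in fact the paper's: Dirichlet--Stokes estimates (Lemma \ref{A.3}) for $(u,\nabla p)$ with the trace of $\partial_t^j u$ on $\Sigma$ controlled through the horizontal dissipation $\bar{\mathcal{D}}_n+\bar{\mathcal{D}}_n^0$, the Dirichlet elliptic estimate (Lemma \ref{lemmA.1}) for $b$, recovery of the $\eta$-terms from the boundary conditions, and absorption of the nonlinearities via Lemmas \ref{lemm5.2}--\ref{lemm5.3}, which is indeed where the cross term $\mathcal{F}_{2N}\mathcal{E}_{N+2}$ enters at the $2N$ level. However, your treatment of $\eta$ and of the full pressure at top order is circular, and this is a genuine gap. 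You bound $\|\eta\|_{2n-1/2}^2\lesssim\|p\|_{2n}^2+\|u\|_{2n+1}^2+\|G^4\|_{2n-1/2}^2$, which requires the trace of the \emph{undifferentiated} pressure at top regularity; but the Dirichlet--Stokes estimate only yields $\|\nabla\partial_t^j p\|_{2n-2j-1}$, and your proposed upgrade from $\nabla p$ to $p$ invokes Lemma \ref{lemmA.2} with boundary datum $\eta e_3+G^4$, i.e.\ it presupposes exactly the bound on $\|\eta\|_{2n-1/2}$ you are in the middle of proving. Neither estimate can be obtained first as you have arranged them. The paper breaks this circle with a device you never invoke: it applies a horizontal derivative to the dynamic condition $\eta=p-2\partial_3u_3-G^4\cdot e_3$ on $\Sigma$, so that only $Dp$ appears, whose trace is controlled by the interior bound on $\nabla p$ already in hand; this gives $\|D\eta\|_{2n-3/2}$, and then the zero-average condition $\int_\Sigma\eta=0$ together with Poincar\'e's inequality on $\Sigma$ recovers $\|\eta\|_{2n-1/2}$ (see \eqref{5.53}); only after that can the undifferentiated pressure be pinned down. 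Without this zero-average/Poincar\'e step (or an equivalent normalization, e.g.\ integrating the boundary condition over $\Sigma$ to fix the additive constant in $p$), your argument does not close.

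Two smaller slips: $\|\eta\|_{2n-1/2}^2$ belongs to $\mathcal{D}_n$ for both $n=2N$ and $n=N+2$, not only at the $2N$ level, so the corrected argument is also needed for \eqref{5.37}; and your kinematic estimate $\|\partial_t^{j+1}\eta\|_{2n-2j-1/2}^2\lesssim\|\partial_t^j u\|_{2n-2j}^2+\|\partial_t^j G^5\|_{2n-2j-1/2}^2$ is one derivative short of the norms $\|\partial_t^{k}\eta\|_{2n-2k+5/2}$, $k\geq2$, demanded by $\mathcal{D}_n$; you must take the trace from the dissipation-level norm $\|\partial_t^{k-1}u\|_{2n-2(k-1)+1}$, as in \eqref{5.55}. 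This last point is harmless, since that norm is exactly what your Stokes step provides, but the circularity in the $p$--$\eta$ recovery must be repaired along the lines above.
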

\begin{proof}
Let $n$ denote $2N$ or $N+2$, and define
\begin{equation*}
\begin{aligned}
\mathcal{Y}_n=&\l\bar{\nabla}^{2n-1}_0G^1\r^{2}_0+\l\bar{\nabla}^{2n-1}_0G^2\r^{2}_1+\l\bar{\nabla}^{2n-1}_0G^3\r^{2}_0\\
&+\l {D}^{2n-1}G^4\r^{2}_{1/2}+\l{D}^{2n-1}G^5\r^{2}_{1/2}+\l\bar{D}^{2n-2}_{0}\partial_tG^5\r^{2}_{1/2}.
\end{aligned}
\end{equation*}
Firstly, by the definitions of $\bar{\mathcal{D}}^{0}_{n},~ \bar{\mathcal{D}}_{n}$ and Korn's inequality, we deduce
\begin{equation*}
\|\bar D^{2n-1}_0 u\|^2_0+\|D\bar D^{2n-1}u\|^2_1\lesssim \bar{ \mathcal{D}}_n,
\end{equation*}
and
\begin{equation}\label{5.38}
\sum^n_{j=0}\l\partial^j_t u\r^2_1\lesssim \bar {\mathcal{D}}^0_n.
\end{equation}
Summing up the the above inequalities, one has
 \begin{equation}\label{5.39}
 \|\bar D^{2n}_0u\|^2_1\lesssim \bar D_n+\bar D^0_n.
 \end{equation}
 Now, we show the estimates of $p$ and $u$. Since we have not an estimate of $\eta$ in terms of dissipation, we can not use the boundary condition on $\Sigma$ as in \eqref{5.30}. Fortunately, we can obtain higher regularity estimates of $u$ on $\Sigma$, then we have the form
\begin{equation}\label{5.40}
\left\{
\begin{aligned}
&-\Delta u+ \nabla p=-\partial_{t} u+ G^{1},~~~~~~~~~~~~~~~~~~~~~~~~~~{\rm in}~\Omega~ \\
&{\rm div} u=G^{2},~~~~~~~~~~~~~~~~~~~~~~~~~~~~~~~~~~~~~~~~~~~~~~{\rm in}~\Omega~ \\
&u=u, ~~~~~~~~~~~~~~~~~~~~~~~~~~~~~~~~~~~~~~~~~~~~~~~~~~~~ {\rm on}~\Sigma\cup\Sigma_{-1}.
\end{aligned}
\right.
\end{equation}
We apply $\partial^j_t~(j=0,1,\cdots,n-1)$ to \eqref{5.40} and employ Lemma \ref{lemmA.2} to deduce
\begin{equation}\label{5.41}
\begin{aligned}
\l\partial_{t}^{j} u\r&^{2}_{2n-2j+1} + \l\nabla \partial_{t}^{j} p\r^{2}_{2n-2j-1}\\
\lesssim& \l\partial_{t}^{j+1} u\r^{2}_{2n-2j-1} + \l \partial_{t}^{j} G^{1}\r^{2}_{2n-2j-1} +\l\partial_{t}^{j} G^{2}\r^{2}_{2n-2j}
+ \l\partial_{t}^{j} u\r^{2}_{H^{2n-2j+1/2}(\Sigma)}\\
 \lesssim &\l\partial_{t}^{j+1} u\r^{2}_{2n-2j-1}+ \l\partial_{t}^{j} u\r^{2}_{H^{2n-2j+1/2}(\Sigma)}
 + \mathcal{Y}_{n} + \bar{\mathcal{D}}_{n}+ \bar{\mathcal{D}}^0_{n}.
\end{aligned}
\end{equation}
Since $\Sigma$ and $\Sigma_{-1}$ are flat, by the definition of Sobolev norm on $T^2$ and the trace theorem, for $j=0,1,\cdots,n-1$, we have
\begin{equation}\label{5.42}
\begin{aligned}
\l\partial_{t}^{j} u\r^{2}_{H^{2n-2j+1/2}(\Sigma)}\lesssim&~ \|\partial^j_t u\|^2_{H^{1/2}(\Sigma)}+\|D^{2n-2j}\partial^j_tu\|^2_{H^{1/2}(\Sigma)}\\
\lesssim&~\|\partial^j_tu\|^2_1+\|D^{2n-2j}\partial^j_tu\|_1\\
\lesssim& ~\bar{\mathcal{D}}_{n}+ \bar{\mathcal{D}}^0_{n},
\end{aligned}
\end{equation}
where we have used the result obtained in \eqref{5.39}. Hence, we deduce
\begin{equation}\label{5.43}
\begin{aligned}
\l\partial_{t}^{j} u\r^{2}_{2n-2j+1} + \l\nabla \partial_{t}^{j} p\r^{2}_{2n-2j-1} \lesssim \l\partial_{t}^{j+1} u\r^{2}_{2n-2j-1}
 + \mathcal{Y}_{n} + \bar{\mathcal{D}}_{n}+ \bar{\mathcal{D}}^0_{n}.
\end{aligned}
\end{equation}
For the total dissipative estimate of $b$, similar in Lemma \ref{lemm5.8}, using the standard elliptic estimates to \eqref{5.32}, we obtain
\begin{equation} \label{5.44}
\begin{aligned}
\l\partial^{j}_{t} b\r_{2n-2j+1}^{2} &\lesssim \l\partial_{t}^{j+1}b\r_{2n-2j-1}^{2}+ \l\partial^{j}_{t}G^{3}\r_{2n-2j-1}^{2}\\
&\lesssim \l \partial_{t}^{j+1} b \r^{2}_{2n-2j-1} + \mathcal{Y}_{n}.
\end{aligned}
\end{equation}
Combining the estimates in \eqref{5.42} and \eqref{5.44}, one has, for $j=0,1,\cdots,n-1$,
\begin{equation}\label{5.45}
\begin{aligned}
\l\partial_{t}^{j} u\r&^{2}_{2n-2j+1} + \l\nabla \partial_{t}^{j} p\r^{2}_{2n-2j-1}+
 \l\partial^{j}_{t} b\r_{2n-2j+1}^{2}\\
 \lesssim& \l\partial_{t}^{j+1} u\r^{2}_{2n-2j-1}+\l\partial_{t}^{j+1} b\r^{2}_{2n-2j-1}
 + \mathcal{Y}_{n} + \bar{\mathcal{D}}_{n}+ \bar{\mathcal{D}}^0_{n}.
\end{aligned}
\end{equation}
After a simple induction on \eqref{5.45},  the definitions of $ \bar{\mathcal{D}}_{n}$ and $ \bar{\mathcal{D}}^0_{n}$, one has
\begin{equation}\label{5.50}
\sum_{j=0}^{n} \l\partial_{t}^{j} u\r^{2}_{2n-2j+1} + \sum_{j=0}^{n-1}\l\partial_{t}^{j}  p\r^{2}_{2n-2j}+\sum_{j=0}^{n} \l\partial_{t}^{j} b\r^{2}_{2n-2j+1}
 \lesssim \mathcal{Y}_{n} + \bar{\mathcal{D}}_{n}+\bar{\mathcal{D}}_{n}^0,
\end{equation}
where for $j=n$, we have used the result in \eqref{5.38}.

Note that the dissipation estimates in $\bar{\mathcal{D}}_{n}$ and $\bar{\mathcal{D}}_{n}^0$ only contains $u$ and $b$, then we have to recover certain dissipation estimates of $\eta$. We may derive some estimates of $\partial_t^j\eta$ for $j=0,1,\cdots,n+1$ on $\Sigma$ by employing the boundary conditions of \eqref{3.13}:
\begin{equation}\label{5.51}
\eta=p-2\partial_3u_3-G^4,
\end{equation}
and
\begin{equation}\label{5.52}
\partial_{t} \eta= u_{3} + G^{5},
\end{equation}

For $j=0$, we use the boundary condition \eqref{5.51}. Note that we do not have any bound on p on the boundary $\Sigma$, but we have bounded $\nabla p$ in $\Omega$. Thus, we differentiate \eqref{5.51} and employ $\eqref{5.50}$ to find that
\begin{equation*}
\begin{aligned}
\|D \eta\|^{2}_{2n-3/2}&\lesssim \| D p\|^{2}_{H^{2n-3/2}(\Sigma)}
+\|D\partial_{3} u_{3} \|^{2}_{H^{2n-3/2}(\Sigma)}  +\|D  G^{4}\|^{2}_{H^{2n-3/2}(\Sigma)}\\
&\lesssim \| \nabla p\|^{2}_{2n-1}+\| u_{3} \|^{2}_{2n+1}
+\| G^{4}\|^{2}_{2n-1/2}\\
&\lesssim \mathcal{Y}_{n} + \bar{\mathcal{D}}_{n}+\bar{\mathcal{D}}_{n}^0.
\end{aligned}
\end{equation*}
Thanks to the critical zero average condition
\begin{equation*}
\int_{T^{2}} \eta=0,
\end{equation*}
allow us to use Poincar${\rm\acute{e}}$ inequality on $\Sigma$ to know
\begin{equation}\label{5.53}
\|\eta\|^{2}_{2n-1/2} \lesssim \|\eta\|^{2}_{0} +  \|D \eta\|^{2}_{2n-3/2}
\lesssim \|D \eta\|^{2}_{2n-3/2} \lesssim \mathcal{Y}_{n}+\bar{\mathcal{D}}_{n}+\bar{\mathcal{D}}_{n}^0.
\end{equation}

For $j=1$, we use $\eqref{5.52}$, the definition of $\mathcal{Y}_n$ and $\eqref{5.50}$ to see
\begin{equation}\label{5.54}
\begin{aligned}
\l\partial_{t} \eta\r^{2}_{2n-1/2}&\lesssim \| u_{3}\|^{2}_{H^{2n-1/2}(\Sigma)}
+\l G^{5}\r^{2}_{H^{2n-1/2}(\Sigma)}\\
&\lesssim \| u_{3}\|^{2}_{2n}
+\l G^{5}\r^{2}_{2n-1/2}\\
&\lesssim \mathcal{Y}_{n} + \bar{\mathcal{D}}_{n}+\bar{\mathcal{D}}^0_n.
\end{aligned}
\end{equation}

Finally, for $j=2,...,n+1$ we apply $\partial_{t}^{j-1} $ to $\eqref{5.52}$ and use trace estimate to see that
\begin{equation}\label{5.55}
\begin{aligned}
\l\partial_{t}^{j} \eta\r^{2}_{2n-2j+5/2}&\lesssim \l\partial^{j-1}_{t} u_{3}\r^{2}_{H^{2n-2j+5/2}(\Sigma)}
+\l\partial^{j-1}_{t} G^{5}\r^{2}_{H^{2n-2j+5/2}(\Sigma)}\\
&\lesssim \l\partial^{j-1}_{t} u_{3}\r^{2}_{2n-2(j-1)+1}
+\l\partial^{j-1}_{t} G^{5}\r^{2}_{2n-2(j-1)+1/2}\\
&\lesssim  \mathcal{Y}_{n} + \bar{\mathcal{D}}_{n}+\bar{\mathcal{D}}^0.
\end{aligned}
\end{equation}

Summing $\eqref{5.53}$, $\eqref{5.54}$ and $\eqref{5.55}$, we complete the estimate for $\eta$, namely,
\begin{equation}\label{5.57}
\|\eta\|^{2}_{2n-1/2} + \l\partial_{t} \eta\r^{2}_{2n-1/2}+\sum_{j=2}^{n+1}\l\partial_{t}^{j} \eta\r^{2}_{2n-2j+5/2}
\lesssim \mathcal{Y}_{n} + \bar{\mathcal{D}}_{n}+\bar{\mathcal{D}}^0_n.
\end{equation}
It follows from \eqref{5.50} and \eqref{5.57} that, for $n=2N$ or $n=N+2$, we have
\begin{equation}\label{5.58}
\mathcal{D}_{n}\lesssim \bar{\mathcal{D}}_{n}+\bar{\mathcal{D}}_{n}^{0}+\mathcal{Y}_n.
\end{equation}
Setting $n=2N$ in $\eqref{5.58}$ and using the estimates $\eqref{4.8}$-$\eqref{4.9}$ in Lemma \ref{lemm5.2} to estimate $\mathcal{Y}_{2N} \lesssim (\mathcal{E}_{2N})^{\theta}\mathcal{D}_{2N} + \mathcal{E}_{N+2}\mathcal{F}_{2N}$. On the other hand, we set $n=N+2$  and apply the estimate \eqref{4.11} in Lemma \ref{lemm5.3} to bound $\mathcal{Y}_{N+2} \lesssim (\mathcal{E}_{2N})^{\theta}\mathcal{D}_{N+2}$.

\end{proof}

\subsection{Global Energy Estimates}

We first need to control $\mathcal{F}_{2N}$. This is achieved by the following proposition.

\begin{prop}\label{prop6.1}
There exists a universal constant $0<\delta<1$ so that if $\mathcal{G}_{2N} (T)\leq \delta$, then
\begin{equation}\label{6.1}
\sup_{0\leq r \leq t} \mathcal{F}_{2N}(r) \lesssim \mathcal{F}_{2N}(0) + t \int^{t}_{0} \mathcal{D}_{2N}, ~~~for~all~0\leq t \leq T.
\end{equation}
\end{prop}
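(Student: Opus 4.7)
The plan is to derive a differential inequality for $\mathcal{F}_{2N}(t)=\|\eta\|^2_{4N+1/2}$ directly from the kinematic boundary condition and then close everything with Cauchy--Schwarz and Young's inequality. The key observation is that $\mathcal{D}_{2N}$ only controls $\|\eta\|_{4N-1/2}$ (two full orders below $\mathcal{F}_{2N}$) but does control $\|u\|_{4N+1}$, so via the trace theorem $\|u_3\|_{H^{4N+1/2}(\Sigma)}\lesssim\sqrt{\mathcal{D}_{2N}}$, which suffices to drive the top derivative of $\eta$.

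I would apply the fractional Fourier multiplier $\Lambda^{4N+1/2}:=(-\Delta_\star)^{(4N+1/2)/2}$ to the fifth equation of \eqref{3.13}, namely $\partial_t\eta=u_3+G^5$ with $G^5=-D\eta\cdot u$, and take the $L^2(\Sigma)$ inner product with $\Lambda^{4N+1/2}\eta$ to obtain
\begin{equation*}
\frac{1}{2}\frac{d}{dt}\mathcal{F}_{2N}=\int_\Sigma \Lambda^{4N+1/2}u_3\,\Lambda^{4N+1/2}\eta+\int_\Sigma \Lambda^{4N+1/2}G^5\,\Lambda^{4N+1/2}\eta.
\end{equation*}
The linear term is immediately bounded by $\|u_3\|_{H^{4N+1/2}(\Sigma)}\sqrt{\mathcal{F}_{2N}}\lesssim\sqrt{\mathcal{D}_{2N}\mathcal{F}_{2N}}$. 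For the nonlinear term I would use a Kato--Ponce type commutator decomposition, writing $\Lambda^{4N+1/2}(u\cdot D\eta)=u\cdot D\Lambda^{4N+1/2}\eta+[\Lambda^{4N+1/2},u\cdot D]\eta$. The transport piece is handled by integration by parts on the torus $\Sigma$:
\begin{equation*}
\int_\Sigma (u\cdot D\Lambda^{4N+1/2}\eta)\Lambda^{4N+1/2}\eta=-\tfrac12\int_\Sigma (\operatorname{div}_\Sigma u)|\Lambda^{4N+1/2}\eta|^2\lesssim \|Du\|_{L^\infty}\mathcal{F}_{2N}\lesssim\sqrt{\mathcal{E}_{2N}}\,\mathcal{F}_{2N},
\end{equation*}
while the commutator piece is estimated by $(\|Du\|_{L^\infty}\|\Lambda^{4N+1/2}\eta\|_{L^2}+\|u\|_{H^{4N+1/2}(\Sigma)}\|D\eta\|_{L^\infty})\sqrt{\mathcal{F}_{2N}}\lesssim\sqrt{\mathcal{E}_{2N}}\mathcal{F}_{2N}+\sqrt{\mathcal{D}_{2N}\mathcal{E}_{2N}}\sqrt{\mathcal{F}_{2N}}$, where $\|D\eta\|_{L^\infty}$ and $\|Du\|_{L^\infty}$ are controlled by Sobolev embedding from $\mathcal{E}_{2N}\leq\mathcal{G}_{2N}\leq\delta$.

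Collecting these bounds gives
\begin{equation*}
\frac{d}{dt}\mathcal{F}_{2N}\lesssim \sqrt{\mathcal{D}_{2N}\mathcal{F}_{2N}}+\sqrt{\mathcal{E}_{2N}}\,\mathcal{F}_{2N}.
\end{equation*}
Integrating in time from $0$ to $t\leq T$ and writing $M(t):=\sup_{0\leq r\leq t}\mathcal{F}_{2N}(r)$, Cauchy--Schwarz yields $\int_0^t\sqrt{\mathcal{D}_{2N}\mathcal{F}_{2N}}\,dr\leq\sqrt{t\,M(t)\int_0^t\mathcal{D}_{2N}}$, while for the second term $\int_0^t\sqrt{\mathcal{E}_{2N}}\mathcal{F}_{2N}\,dr\lesssim\sqrt{\delta}\,t\,M(t)$ by the smallness assumption. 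Thus
\begin{equation*}
M(t)\leq \mathcal{F}_{2N}(0)+C\sqrt{t\,M(t)\int_0^t\mathcal{D}_{2N}}+C\sqrt{\delta}\,t\,M(t).
\end{equation*}
The issue is the second error term carries a factor of $t$ that cannot be absorbed uniformly in time. To fix this I would repackage the estimate in a Gronwall form: first rewrite $\sqrt{\mathcal{E}_{2N}}\mathcal{F}_{2N}\leq\sqrt{\mathcal{D}_{2N}\mathcal{F}_{2N}}\sqrt{\mathcal{F}_{2N}}$ (using $\mathcal{E}_{2N}\lesssim\mathcal{D}_{2N}$ from the Poincar\'e inequality valid via the zero-average condition) so that only the single structural bound $\frac{d}{dt}\mathcal{F}_{2N}\lesssim\sqrt{\mathcal{D}_{2N}\mathcal{F}_{2N}}(1+\sqrt{M(t)})$ remains; smallness of $\mathcal{G}_{2N}$ bounds the parenthetical factor by a constant. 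Then Young's inequality $C\sqrt{tM\int_0^t\mathcal{D}_{2N}}\leq\frac12 M+\frac{C^2}{2}t\int_0^t\mathcal{D}_{2N}$ absorbs the $M$ on the left-hand side and yields the claimed estimate $M(t)\lesssim \mathcal{F}_{2N}(0)+t\int_0^t\mathcal{D}_{2N}$.

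The main obstacle is the commutator analysis at the top regularity level $4N+1/2$: one must show that every term other than the pure transport (which is handled by the divergence form integration by parts) can be absorbed either into $\sqrt{\mathcal{D}_{2N}\mathcal{F}_{2N}}$ or a factor of $\sqrt{\mathcal{E}_{2N}}$ that smallness lets us dominate. This requires carefully tracking that no term forces the use of $\|\eta\|_{4N+1/2}^2$ with a coefficient that is not small, and that the Kato--Ponce type bound on $\Sigma$ distributes derivatives compatibly with the definitions of $\mathcal{E}_{2N}$ and $\mathcal{D}_{2N}$.
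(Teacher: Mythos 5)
Your overall strategy (a transport-type estimate on the kinematic boundary condition at regularity $4N+1/2$, with the pure transport piece handled by integration by parts and the rest by commutator bounds) is the same in spirit as the paper's proof, which invokes exactly such a transport estimate (Lemma 7.1 of \cite{Ticee}) to get the bound \eqref{6.2}. However, your execution has a genuine gap at the decisive point. The Gronwall coefficient in front of $\mathcal{F}_{2N}$ must be controlled by the \emph{low-tier} energy $\sqrt{\mathcal{E}_{N+2}}$, not by $\sqrt{\mathcal{E}_{2N}}$: since $N\geq3$, the $L^\infty$ norms $\|Du\|_{L^\infty}$, $\|D\eta\|_{L^\infty}$ are already bounded by $\sqrt{\mathcal{E}_{N+2}}$, and the whole point of the definition of $\mathcal{G}_{2N}$ is that $\mathcal{E}_{N+2}(r)\lesssim\delta(1+r)^{-(4N-8)}$, so $\int_0^t\sqrt{\mathcal{E}_{N+2}}\lesssim\sqrt{\delta}$ uniformly in $t$; the resulting exponential Gronwall factor is then $\lesssim1$ and the term $\left(\int_0^t\sqrt{\mathcal{E}_{N+2}\mathcal{F}_{2N}}\right)^2\leq\delta\sup_{0\leq r\leq t}\mathcal{F}_{2N}(r)$ is absorbed. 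You instead use $\sqrt{\mathcal{E}_{2N}}$, which is merely small and \emph{not} time-integrable, and you correctly notice that this produces an error $\sqrt{\delta}\,t\,M(t)$ that cannot be absorbed.

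Your proposed repair does not work. The inequality $\mathcal{E}_{2N}\lesssim\mathcal{D}_{2N}$ is false in the case $\sigma=0$: $\mathcal{D}_{2N}$ only contains $\|\eta\|^2_{4N-1/2}$ while $\mathcal{E}_{2N}$ contains $\|\eta\|^2_{4N}$, and no Poincar\'e/zero-average argument can recover the missing half derivative at top order (this very failure is the reason the paper only obtains almost-exponential rather than exponential decay for $\sigma=0$, and why the interpolation $\mathcal{E}_{N+2}\lesssim\mathcal{D}_{N+2}^{\theta}\mathcal{E}_{2N}^{1-\theta}$ is needed in Proposition \ref{prop6.3}). Moreover, your claim that smallness of $\mathcal{G}_{2N}$ bounds $\sqrt{M(t)}$ by a constant is also incorrect: $\mathcal{G}_{2N}(T)\leq\delta$ only gives $\mathcal{F}_{2N}(r)\leq\delta(1+r)$, so $M(t)$ may grow linearly in time (indeed, the conclusion \eqref{6.1} itself allows linear growth). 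The fix is simply to run your own scheme with $\mathcal{E}_{N+2}$ in place of $\mathcal{E}_{2N}$ wherever an $L^\infty$ coefficient appears: the transport term gives $\sqrt{\mathcal{E}_{N+2}}\,\mathcal{F}_{2N}$, the commutator gives $\sqrt{\mathcal{D}_{2N}\mathcal{F}_{2N}}+\sqrt{\mathcal{E}_{N+2}\mathcal{F}_{2N}}\sqrt{\mathcal{F}_{2N}}$, and then Gronwall plus $\int_0^t\sqrt{\mathcal{E}_{N+2}}\lesssim\sqrt{\delta}$, Cauchy--Schwarz in time, and absorption of $\delta\,M(t)$ yield \eqref{6.1}; this is precisely the structure of \eqref{6.2}--\eqref{6.5} in the paper.
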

\begin{proof}
Based on the transport estimate on the kinematic boundary condition, we may show as in Lemma 7.1 of \cite{Ticee} that
\begin{equation}\label{6.2}
\begin{aligned}
&\sup_{0\leq r \leq t} \mathcal{F}_{2N}(r) \lesssim \exp (C\int_{0}^{t} \sqrt{\mathcal{E}_{N+2}(r)}dr)\\
&\times \left[\mathcal{F}_{2N}(0)+ t\int_{0}^{t} (1+\mathcal{E}_{2N}(r)) \mathcal{D}_{2N}(r)dr +
\left(\int_{0}^{t} \sqrt{\mathcal{E}_{N+2}(r)\mathcal{F}_{2N}(r)} \right)^{2}\right].
\end{aligned}
\end{equation}
According to  $\mathcal{G}_{2N}\leq \delta$, we know
\begin{equation}\label{6.3}
 \int_{0}^{t} \sqrt{\mathcal{E}_{N+2}(r)}dr \lesssim \sqrt{\delta}
\int_{0}^{t}\frac{1}{(1+r)^{2N-4}} dr \lesssim \sqrt{\delta}.
\end{equation}
Since $\delta\leq 1$, this implies that for any constant $C>0$,
\begin{equation}\label{6.4}
\exp \left(C\int_{0}^{t} \sqrt{\mathcal{E}_{N+2}(r)}dr\right)\lesssim 1.
\end{equation}
Then by $\eqref{6.3}$ and $\eqref{6.4}$,  we deduce from $\eqref{6.2}$ that
\begin{equation}\label{6.5}
\begin{aligned}
\sup_{0\leq r \leq t} \mathcal{F}_{2N}(r) \lesssim& \mathcal{F}_{2N}(0)+ t\int_{0}^{t} \mathcal{D}_{2N}(r) dr +
\sup_{0\leq r\leq t} \mathcal{F}_{2N}(r) \left(\int_{0}^{t} \sqrt{\mathcal{E}_{N+2}(r)}dr\right)^{2}\\
\lesssim &\mathcal{F}_{2N}(0)+ t\int_{0}^{t} \mathcal{D}_{2N}(r) dr + \delta
\sup_{0\leq r\leq t} \mathcal{F}_{2N}(r).
\end{aligned}
\end{equation}
By taking $\delta$ small enough, $\eqref{6.1}$ follows.
\end{proof}

This bound on $\mathcal{F}_{2N}$ allows us to estimate the integral of $\mathcal{E}_{N+2}\mathcal{F}_{2N}$ and
$\sqrt{\mathcal{D}_{2N}\mathcal{E}_{N+2}\mathcal{F}_{2N}}$ as in Corollary 7.3 of \cite{Ticee}.
\begin{coro}\label{coro6.1}
There exists $0<\delta<1$ so that if $\mathcal{G}_{2N}(T) \leq \delta$, then
\begin{equation}\label{6.7}
\int_{0}^{t}  \mathcal{E}_{N+2}\mathcal{F}_{2N} \lesssim \delta \mathcal{F}_{2N}(0)+ \delta \int_{0}^{t} \mathcal{D}_{2N}(r) dr,
\end{equation}
and
\begin{equation}\label{6.6}
\int_{0}^{t}  \sqrt{\mathcal{D}_{2N} \mathcal{E}_{N+2}\mathcal{F}_{2N}} \lesssim \mathcal{F}_{2N}(0)+ \sqrt{\delta} \int_{0}^{t} \mathcal{D}_{2N}(r) dr,
\end{equation}
for all $0\leq t\leq T$.
\end{coro}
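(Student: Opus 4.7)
The plan is to exploit two facts from the definition of $\mathcal{G}_{2N}$: under the smallness hypothesis we have the pointwise decay $\mathcal{E}_{N+2}(r)\lesssim\delta(1+r)^{-(4N-8)}$ together with the growth bound $\mathcal{F}_{2N}(r)\lesssim\delta(1+r)$, and the exponent $4N-8\geq 4$ (since $N\geq 3$) is large enough to make the time integrals $\int_0^t(1+r)^{-k}\,dr$ finite independently of $t$ whenever $k\geq 3$. The corollary then follows from Proposition \ref{prop6.1} together with these decay estimates, exactly in the spirit of \cite{Ticee}.

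For the bound \eqref{6.7}, I would substitute the estimate from Proposition \ref{prop6.1} directly inside the integrand, writing
\begin{equation*}
\int_0^t \mathcal{E}_{N+2}(r)\mathcal{F}_{2N}(r)\,dr
\lesssim \mathcal{F}_{2N}(0)\int_0^t \mathcal{E}_{N+2}(r)\,dr
+\int_0^t r\,\mathcal{E}_{N+2}(r)\Big(\int_0^r \mathcal{D}_{2N}(s)\,ds\Big)dr.
\end{equation*}
The first piece is controlled by $\delta\mathcal{F}_{2N}(0)$ using $\int_0^t(1+r)^{-(4N-8)}dr\lesssim 1$. For the second piece I would pull $\int_0^t\mathcal{D}_{2N}$ out of the inner integral (since the integrand is nonnegative) and use $\int_0^t r(1+r)^{-(4N-8)}dr\lesssim\int_0^t(1+r)^{-(4N-9)}dr\lesssim 1$, which is where the condition $N\geq 3$ (so $4N-9\geq 3$) enters in a crucial way. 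This yields the $\delta\int_0^t\mathcal{D}_{2N}$ term.

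For the bound \eqref{6.6} I would apply Cauchy--Schwarz in time,
\begin{equation*}
\int_0^t\sqrt{\mathcal{D}_{2N}\mathcal{E}_{N+2}\mathcal{F}_{2N}}
\leq\Big(\int_0^t \mathcal{D}_{2N}\Big)^{1/2}\Big(\int_0^t \mathcal{E}_{N+2}\mathcal{F}_{2N}\Big)^{1/2},
\end{equation*}
then insert the already-proved estimate \eqref{6.7} on the second factor and finish with Young's inequality $\sqrt{ab}\leq a+b$ on the resulting cross term $\sqrt{\mathcal{F}_{2N}(0)\int_0^t\mathcal{D}_{2N}}$. The $\sqrt{\delta}$ factor that appears can be absorbed to obtain the claimed form; in particular the coefficient of $\mathcal{F}_{2N}(0)$ becomes harmless because $\sqrt{\delta}\leq 1$.

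I do not anticipate a serious obstacle here; the only delicate point is bookkeeping the powers of $(1+r)$ to confirm that all the improper integrals are convergent uniformly in $t$, and that one genuinely recovers the form $\delta\mathcal{F}_{2N}(0)+\delta\int_0^t\mathcal{D}_{2N}$ (rather than the weaker $\delta(\mathcal{F}_{2N}(0)+t\int_0^t\mathcal{D}_{2N})$ that one would get by bounding $\mathcal{F}_{2N}$ crudely by its supremum). Carrying the $r$ factor from Proposition \ref{prop6.1} through the integration against $\mathcal{E}_{N+2}(r)$ is what consumes one extra power of $(1+r)$ in the decay and is the only step requiring care.
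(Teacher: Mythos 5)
Your proposal is correct and is essentially the argument the paper intends: the paper itself gives no details, deferring to Corollary 7.3 of \cite{Ticee}, and that cited proof is exactly your computation — insert the bound of Proposition \ref{prop6.1} into the integrand, use the decay $\mathcal{E}_{N+2}(r)\lesssim\delta(1+r)^{-(4N-8)}$ furnished by $\mathcal{G}_{2N}(T)\leq\delta$ to make the time integrals convergent, and obtain \eqref{6.6} from \eqref{6.7} via Cauchy--Schwarz and Young. No gaps; your bookkeeping of the extra factor of $r$ and of where $N\geq 3$ enters matches the cited argument.
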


Now we show the boundness of the high-order terms.
\begin{prop}
There exists $0<\delta<1$ so that if $\mathcal{G}_{2N} (T)\leq \delta$, then
\begin{equation}\label{6.8}
\sup_{0\leq r \leq t} \mathcal{E}_{2N}(r) + \int_{0}^{t} \mathcal{D}_{2N}+  \sup_{0\leq r \leq t}
\frac{\mathcal{F}_{2N}(r)}{(1+r)}\lesssim \mathcal{E}_{2N}(0)+\mathcal{F}_{2N}(0), ~~~for~all~0\leq t \leq T.
\end{equation}
\end{prop}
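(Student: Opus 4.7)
The plan is to chain together the energy--dissipation estimates of Lemma \ref{lemm5.4} and Lemma \ref{lemm5.6} at the $2N$ level, upgrade from the reduced quantities $\bar{\mathcal{E}}_{2N}^{0},\bar{\mathcal{E}}_{2N},\bar{\mathcal{D}}_{2N}^{0},\bar{\mathcal{D}}_{2N}$ to the full $\mathcal{E}_{2N},\mathcal{D}_{2N}$ via Lemma \ref{lemm5.8} and Lemma \ref{lemm5.9}, and then use Proposition \ref{prop6.1} together with Corollary \ref{coro6.1} to close the estimate. Throughout I will denote by $\theta>0$ the power appearing in the nonlinear lemmas and will repeatedly use the hypothesis $\mathcal{G}_{2N}(T)\leq\delta\ll 1$ to absorb small terms into the left--hand side.

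First I would add \eqref{5.5} from Lemma \ref{lemm5.4} to the time--integrated version of \eqref{5.16} from Lemma \ref{lemm5.6}, yielding
\begin{equation*}
\bar{\mathcal{E}}_{2N}^{0}(t)+\bar{\mathcal{E}}_{2N}(t)+\int_{0}^{t}\bigl(\bar{\mathcal{D}}_{2N}^{0}+\bar{\mathcal{D}}_{2N}\bigr)\lesssim \mathcal{E}_{2N}(0)+(\mathcal{E}_{2N}(t))^{3/2}+\int_{0}^{t}(\mathcal{E}_{2N})^{\theta}\mathcal{D}_{2N}+\int_{0}^{t}\sqrt{\mathcal{D}_{2N}\mathcal{F}_{2N}\mathcal{E}_{N+2}}.
\end{equation*}
Next I would invoke Lemma \ref{lemm5.8} (at the $2N$ level) and the time--integrated Lemma \ref{lemm5.9} to replace $\bar{\mathcal{E}},\bar{\mathcal{D}}$ on the left by $\mathcal{E}_{2N},\mathcal{D}_{2N}$, at the price of the additional error terms $(\mathcal{E}_{2N})^{1+\theta}$ and $\int_{0}^{t}\bigl[(\mathcal{E}_{2N})^{\theta}\mathcal{D}_{2N}+\mathcal{F}_{2N}\mathcal{E}_{N+2}\bigr]$. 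Since $\sup_{r\leq t}\mathcal{E}_{2N}(r)\leq\mathcal{G}_{2N}(T)\leq\delta$, the powers $(\mathcal{E}_{2N})^{3/2}$ and $(\mathcal{E}_{2N})^{1+\theta}$ can be absorbed on the left, and the smallness of $\mathcal{E}_{2N}^{\theta}$ allows the $\int (\mathcal{E}_{2N})^{\theta}\mathcal{D}_{2N}$ term to be absorbed into $\int_{0}^{t}\mathcal{D}_{2N}$, giving
\begin{equation*}
\sup_{0\leq r\leq t}\mathcal{E}_{2N}(r)+\int_{0}^{t}\mathcal{D}_{2N}\lesssim \mathcal{E}_{2N}(0)+\int_{0}^{t}\sqrt{\mathcal{D}_{2N}\mathcal{F}_{2N}\mathcal{E}_{N+2}}+\int_{0}^{t}\mathcal{F}_{2N}\mathcal{E}_{N+2}.
\end{equation*}

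At this point I would apply Corollary \ref{coro6.1}: the bound \eqref{6.6} yields $\int_{0}^{t}\sqrt{\mathcal{D}_{2N}\mathcal{E}_{N+2}\mathcal{F}_{2N}}\lesssim \mathcal{F}_{2N}(0)+\sqrt{\delta}\int_{0}^{t}\mathcal{D}_{2N}$, while \eqref{6.7} yields $\int_{0}^{t}\mathcal{E}_{N+2}\mathcal{F}_{2N}\lesssim \delta\mathcal{F}_{2N}(0)+\delta\int_{0}^{t}\mathcal{D}_{2N}$. For $\delta$ sufficiently small the $\int \mathcal{D}_{2N}$ pieces are absorbed on the left, producing
\begin{equation*}
\sup_{0\leq r\leq t}\mathcal{E}_{2N}(r)+\int_{0}^{t}\mathcal{D}_{2N}(r)\,dr\lesssim \mathcal{E}_{2N}(0)+\mathcal{F}_{2N}(0).
\end{equation*}

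Finally, to handle the last term in \eqref{6.8}, I would apply Proposition \ref{prop6.1} on each sub--interval $[0,s]\subset[0,t]$ to write $\sup_{0\leq r\leq s}\mathcal{F}_{2N}(r)\lesssim \mathcal{F}_{2N}(0)+s\int_{0}^{s}\mathcal{D}_{2N}$. Dividing by $(1+s)$ and using $s/(1+s)\leq 1$ together with the dissipation bound already obtained, I conclude $\sup_{0\leq r\leq t}\mathcal{F}_{2N}(r)/(1+r)\lesssim \mathcal{F}_{2N}(0)+\int_{0}^{t}\mathcal{D}_{2N}\lesssim \mathcal{E}_{2N}(0)+\mathcal{F}_{2N}(0)$, which combined with the previous display gives \eqref{6.8}. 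The main obstacle is the bookkeeping in the absorption step: one must choose $\delta$ small enough so that the three different error terms ($\mathcal{E}_{2N}^{\theta}\mathcal{D}_{2N}$, $\sqrt{\delta}\int\mathcal{D}_{2N}$ from \eqref{6.6}, and $\delta\int\mathcal{D}_{2N}$ from \eqref{6.7}) can simultaneously be moved to the left--hand side while still leaving a positive coefficient on $\int\mathcal{D}_{2N}$.
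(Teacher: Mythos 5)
Your proposal is correct and follows essentially the same route as the paper: sum Lemma \ref{lemm5.4} and Lemma \ref{lemm5.6}, upgrade via Lemma \ref{lemm5.8} and Lemma \ref{lemm5.9}, and absorb the error terms using the smallness of $\mathcal{G}_{2N}$ together with Corollary \ref{coro6.1}. You also spell out the final bound on $\sup_{0\leq r\leq t}\mathcal{F}_{2N}(r)/(1+r)$ via Proposition \ref{prop6.1}, a step the paper's proof leaves implicit, and your handling of it is correct.
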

\begin{proof}
Fix $0\leq t\leq T$. We sum up the results of Lemma \ref{lemm5.4} and Lemma \ref{lemm5.6} to know
\begin{equation}\label{6.9}
\begin{aligned}
&\bar{\mathcal{E}}^{0}_{2N}(t)+  \bar{\mathcal{E}}_{2N}(t)+ \int_{0}^{t} ( \bar{\mathcal{D}}^{0}_{2N}
+\bar{\mathcal{D}}_{2N})\\
\leq C_1 \mathcal{E}_{2N}(0)&+C_1(\mathcal{E}_{2N}(t))^{3/2}+
C_1\int_{0}^{t} ( \mathcal{E}^{\theta}_{2N} \mathcal{D}_{2N} + \sqrt{\mathcal{D}_{2N} \mathcal{E}_{N+2} \mathcal{F}_{2N}} ) . \end{aligned}
\end{equation}

Then, combining with Lemma \ref{lemm5.8} and Lemma \ref{lemm5.9}, we deduce
\begin{equation}\label{6.10}
\begin{aligned}
\mathcal{E}_{2N}(t)+ \int_{0}^{t} \mathcal{D}_{2N}& \leq C_2\left( \bar{\mathcal{E}}^0_{2N}+\bar{\mathcal{E}}_{2N}+
\int_{0}^{t}(\bar{\mathcal{D}}^{0}_{2N}
+\bar{\mathcal{D}}_{2N})\right)+C_2(\mathcal{E}_{2N}(t))^{1+\theta}\\
&+C_2\int^t_0 (\mathcal{E}^{\theta}_{2N} \mathcal{D}_{2N}+ \sqrt{\mathcal{D}_{2N} \mathcal{E}_{N+2} \mathcal{F}_{2N}}+ \mathcal{E}_{N+2} \mathcal{F}_{2N})\\
&\leq C_3( \mathcal{E}_{2N}(0)+\mathcal{E}_{2N}(t))^{1+\theta}+\mathcal{E}_{2N}(t))^{3/2})\\
&+C_3\int^t_0 (\mathcal{E}^{\theta}_{2N} \mathcal{D}_{2N}+ \sqrt{\mathcal{D}_{2N} \mathcal{E}_{N+2} \mathcal{F}_{2N}}+ \mathcal{E}_{N+2} \mathcal{F}_{2N}).
\end{aligned}
\end{equation}
Let us assume that $\delta\in(0,1)$ is as small as in Corollary \ref{coro6.1}, thus we conclude
\begin{equation}\label{6.12}
\sup_{0\leq r \leq t} \mathcal{E}_{2N}(t) + \int_{0}^{t}  \mathcal{D}_{2N} \lesssim \mathcal{E}_{2N}(0)+\mathcal{F}_{2N}(0).
\end{equation}
\end{proof}

It remains to show the decay estimates of $\mathcal{E}_{N+2}$. Before that, we show that the pressure term involving in Lemma \ref{lemm5.5} can be absorbed into $ \bar{\mathcal{E}}^0_{N+2} + \bar{\mathcal{E}}_{N+2} $.
\begin{lemm}\label{lemm6.1}
Let $F^2$ be defined in \eqref{3.9} with $\partial^\alpha=\partial^{N+2}_t$, then there exists a constant $\delta\in(0,1)$ so that if $\mathcal{G}_{2N}\leq \delta$, then
\begin{equation}\label{0.1}
\begin{aligned}
\frac{1}{2}(\bar{\mathcal{E}}^0_{N+2} + \bar{\mathcal{E}}_{N+2})\leq& \bar{\mathcal{E}}^0_{N+2} + \bar{\mathcal{E}}_{N+2}
-2\int_{\Omega}\partial^{N+1}_tp F^2 J\\
\leq&\frac{3}{2}(\bar{\mathcal{E}}^0_{N+2} + \bar{\mathcal{E}}_{N+2}).
\end{aligned}
\end{equation}
\end{lemm}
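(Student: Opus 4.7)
The plan is to reduce the double inequality \eqref{0.1} to the single symmetric bound
\begin{equation*}
\left| 2 \int_\Omega \partial^{N+1}_t p\, F^2 J \right| \leq \tfrac{1}{2}\bigl( \bar{\mathcal{E}}^0_{N+2} + \bar{\mathcal{E}}_{N+2}\bigr),
\end{equation*}
from which both halves of \eqref{0.1} follow by adding $\bar{\mathcal{E}}^0_{N+2}+\bar{\mathcal{E}}_{N+2}$ throughout. The strategy is to put $\partial^{N+1}_t p$ into $L^2$ using the definition of $\mathcal{E}_{N+2}$, put $F^2$ into $L^2$ using its manifestly quadratic structure, and then exploit the smallness $\mathcal{G}_{2N}\leq\delta$ to extract a factor $\sqrt{\delta}$ that absorbs every universal constant.

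First, since $\|\eta\|_{5/2}^2\lesssim \mathcal{G}_{2N}\leq\delta$, Lemma \ref{lemm4.1} gives $\|J\|_{L^\infty}\lesssim 1$, so Cauchy--Schwarz yields $\bigl|2\int_\Omega \partial^{N+1}_t p\, F^2 J\bigr|\lesssim \|\partial^{N+1}_t p\|_0\,\|F^2\|_0$. The definition \eqref{2.1} of $\mathcal{E}_{N+2}$ contains the term $\|\partial^{N+1}_t p\|_{2(N+2)-2(N+1)-1}^2=\|\partial^{N+1}_t p\|_1^2$, hence $\|\partial^{N+1}_t p\|_0\leq \sqrt{\mathcal{E}_{N+2}}$. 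Meanwhile, taking $\alpha_0=N+2=n$ in Lemma \ref{lemm5.1}, the quadratic estimate \eqref{4.4} provides $\|F^2\|_0\leq \sqrt{\mathcal{E}_{2N}\,\mathcal{E}_{N+2}}$. Multiplying these bounds together I obtain
\begin{equation*}
\left| 2\int_\Omega \partial^{N+1}_t p\, F^2 J\right|\lesssim \sqrt{\mathcal{E}_{2N}}\,\mathcal{E}_{N+2}.
\end{equation*}

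To close, I would convert $\mathcal{E}_{N+2}$ to the temporal/horizontal energies by the improvement estimate \eqref{5.28}, $\mathcal{E}_{N+2}\lesssim \bar{\mathcal{E}}_{N+2}+\bar{\mathcal{E}}^0_{N+2}+\mathcal{E}^\theta_{2N}\mathcal{E}_{N+2}$; since $\mathcal{E}_{2N}\leq\mathcal{G}_{2N}\leq\delta$ is small, the last term absorbs into the left-hand side and gives $\mathcal{E}_{N+2}\lesssim \bar{\mathcal{E}}_{N+2}+\bar{\mathcal{E}}^0_{N+2}$. Inserting this and $\sqrt{\mathcal{E}_{2N}}\leq\sqrt{\delta}$ into the previous display produces
\begin{equation*}
\left| 2\int_\Omega \partial^{N+1}_t p\, F^2 J\right|\leq C\sqrt{\delta}\,\bigl(\bar{\mathcal{E}}^0_{N+2}+\bar{\mathcal{E}}_{N+2}\bigr)
\end{equation*}
for a universal $C$; choosing $\delta$ so small that $C\sqrt{\delta}\leq 1/2$ finishes the argument. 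The work here is essentially bookkeeping rather than analysis: the only potential pitfall is verifying that the high-order pressure term $\partial^{N+1}_t p$ really lives in $\mathcal{E}_{N+2}$ (it does, at $L^2$-regularity), and that Lemma \ref{lemm5.8}'s nonlinear error $\mathcal{E}^\theta_{2N}\mathcal{E}_{N+2}$ does not obstruct absorption, which is precisely guaranteed by taking $\delta$ small enough — the same $\delta$ that already forces $C\sqrt{\delta}\leq 1/2$.
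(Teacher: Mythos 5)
Your proposal is correct and follows essentially the same route as the paper: bound $\bigl|2\int_\Omega \partial^{N+1}_t p\,F^2 J\bigr|$ by Cauchy--Schwarz with $\|J\|_{L^\infty}\lesssim 1$ from Lemma \ref{lemm4.1}, use $\|\partial^{N+1}_t p\|_0\leq\sqrt{\mathcal{E}_{N+2}}$ and the quadratic estimate \eqref{4.4} for $F^2$, then convert $\mathcal{E}_{N+2}$ to $\bar{\mathcal{E}}^0_{N+2}+\bar{\mathcal{E}}_{N+2}$ via \eqref{5.28} with absorption, and finish by choosing $\delta$ small. The only cosmetic difference is that your smallness factor is $\sqrt{\mathcal{E}_{2N}}\leq\sqrt{\delta}$ while the paper records $\mathcal{E}_{2N}^{\theta/2}\leq\delta^{\theta/2}$; either power of $\delta$ suffices for the absorption.
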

\begin{proof}
Let us assume that $\delta\in(0,1)$ is as small as in Corollary \ref{coro6.1}. According to  Theorem \eqref{theo5.5}, one has
\begin{equation*}
\begin{aligned}
\mathcal{E}_{N+2}\lesssim\bar{\mathcal{E}}^0_{N+2} + \bar{\mathcal{E}}_{N+2}+\mathcal{E}_{2N}^{\theta}\mathcal{E}_{N+2}
\leq C(\bar{\mathcal{E}}^0_{N+2} + \bar{\mathcal{E}}_{N+2})+C\delta\mathcal{E}_{N+2}.
\end{aligned}
\end{equation*}
Hence, we deduce
\begin{equation}\label{0.2}
\mathcal{E}_{N+2}\lesssim\bar{\mathcal{E}}^0_{N+2} + \bar{\mathcal{E}}_{N+2}.
\end{equation}
Combining the estimates obtained in \eqref{4.1} and \eqref{4.4}, we know that
\begin{equation*}
\begin{aligned}
\left|2\int_{\Omega}\partial^{N+1}_tp F^2 J\right|\leq& 2\l\partial^{N+1}_tp\r_0\|F^2\|_0\|J\|_{L^{\infty}}\\
\leq& C\sqrt{\mathcal{E}_{N+2}}\sqrt{\mathcal{E}_{2N}^\theta\mathcal{E}_{N+2}}\\
=&\displaystyle\mathcal{E}_{2N}^{\theta/2}\mathcal{E}_{N+2}\leq C\mathcal{E}_{2N}^{\theta/2}(\bar{\mathcal{E}}^0_{N+2} + \bar{\mathcal{E}}_{N+2})\\
\leq& C \delta^{\theta/2}(\bar{\mathcal{E}}^0_{N+2} + \bar{\mathcal{E}}_{N+2}).
\end{aligned}
\end{equation*}
If $\delta$ is small enough, \eqref{0.1} follows.
\end{proof}

\begin{prop}\label{prop6.3}
There exists $0<\delta<1$ so that if $\mathcal{G}_{2N} (T)\leq \delta$, then
\begin{equation}\label{6.13}
(1+t^{4N-8}) \mathcal{E}_{N+2}(t) \lesssim \mathcal{E}_{2N}(0)+ \mathcal{F}_{2N}(0)~~for~all~0\leq t\leq T.
\end{equation}
\end{prop}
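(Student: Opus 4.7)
The plan is to derive a decay differential inequality for $\mathcal{E}_{N+2}$ of Bernoulli type, using the already-established machinery, and then integrate it. I would proceed in four steps.

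First, I would combine Lemma \ref{lemm5.5} with Lemma \ref{lemm5.7} to obtain the aggregated differential identity
\begin{equation*}
\partial_t\Bigl(\bar{\mathcal{E}}^0_{N+2}+\bar{\mathcal{E}}_{N+2}-2\int_\Omega\partial_t^{N+1}pF^2J\Bigr)+\bar{\mathcal{D}}^0_{N+2}+\bar{\mathcal{D}}_{N+2}\lesssim \bigl(\sqrt{\mathcal{E}_{2N}}+\mathcal{E}_{2N}^{\theta/2}\bigr)\mathcal{D}_{N+2}.
\end{equation*}
By Lemma \ref{lemm6.1}, the quantity in parentheses is equivalent to $\bar{\mathcal{E}}^0_{N+2}+\bar{\mathcal{E}}_{N+2}$ and, by Lemma \ref{lemm5.8}, equivalent also to $\mathcal{E}_{N+2}$; call this equivalent quantity $\tilde{\mathcal{E}}_{N+2}$. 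By Lemma \ref{lemm5.9}, $\mathcal{D}_{N+2}\lesssim \bar{\mathcal{D}}^0_{N+2}+\bar{\mathcal{D}}_{N+2}+\mathcal{E}_{2N}^\theta\mathcal{D}_{N+2}$, and thus for $\delta$ small we get both $\mathcal{D}_{N+2}\sim \bar{\mathcal{D}}^0_{N+2}+\bar{\mathcal{D}}_{N+2}$ and absorption of the right-hand side of the identity into the left. This yields the clean inequality
\begin{equation*}
\partial_t\tilde{\mathcal{E}}_{N+2}+c\,\mathcal{D}_{N+2}\leq 0.
\end{equation*}

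Second, I would establish the key interpolation bound. Inspecting the definitions \eqref{2.1} and \eqref{2.2}, every contribution to $\mathcal{E}_{N+2}$ except for $\|\eta\|_{2(N+2)}^2=\|\eta\|_{2N+4}^2$ is already dominated by $\mathcal{D}_{N+2}$ (since $\mathcal{D}_{N+2}$ has one more derivative on $u,b,p$, and more regularity on $\partial_t^j\eta$ for $j\geq 1$). For the borderline $\eta$ term, I interpolate between the dissipative $\|\eta\|_{2N+7/2}^2\lesssim \mathcal{D}_{N+2}$ and the high-regularity bound $\|\eta\|_{4N+1/2}^2=\mathcal{F}_{2N}$. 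Writing $2N+4$ as a convex combination of $2N+7/2$ and $4N+1/2$ gives $\alpha=\frac{4N-7}{4N-6}$, hence
\begin{equation*}
\|\eta\|_{2N+4}^{2}\lesssim \mathcal{D}_{N+2}^{\frac{4N-7}{4N-6}}\mathcal{F}_{2N}^{\frac{1}{4N-6}},
\end{equation*}
so that $\mathcal{E}_{N+2}\lesssim \mathcal{D}_{N+2}^{\frac{4N-7}{4N-6}}\mathcal{F}_{2N}^{\frac{1}{4N-6}}$, which rearranges to the Lyapunov-type inequality $\mathcal{E}_{N+2}^{\frac{4N-6}{4N-7}}\lesssim \mathcal{D}_{N+2}\,\mathcal{F}_{2N}^{\frac{1}{4N-7}}$.

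Third, I would invoke Proposition \ref{prop6.1} together with the already-proved bound $\int_0^t\mathcal{D}_{2N}\lesssim \mathcal{E}_{2N}(0)+\mathcal{F}_{2N}(0)$ (from \eqref{6.8}) to conclude $\mathcal{F}_{2N}(t)\lesssim (1+t)\bigl(\mathcal{E}_{2N}(0)+\mathcal{F}_{2N}(0)\bigr)$. Inserting this into the Lyapunov inequality from the previous step and substituting the result into the decay inequality $\partial_t\tilde{\mathcal{E}}_{N+2}+c\mathcal{D}_{N+2}\leq 0$, I obtain, with $p=1/(4N-7)$,
\begin{equation*}
\partial_t\tilde{\mathcal{E}}_{N+2}+\frac{C\tilde{\mathcal{E}}_{N+2}^{1+p}}{(1+t)^{p}}\leq 0.
\end{equation*}
Setting $y=\tilde{\mathcal{E}}_{N+2}^{-p}$, this becomes $y'\geq Cp(1+t)^{-p}$; since $p<1$ for $N\geq 3$, integration gives $y(t)\gtrsim (1+t)^{1-p}$, and hence $\tilde{\mathcal{E}}_{N+2}(t)\lesssim (1+t)^{-(1-p)/p}=(1+t)^{-(4N-8)}$.

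Finally, combining with $\tilde{\mathcal{E}}_{N+2}\sim\mathcal{E}_{N+2}$ and the initial bound $\mathcal{E}_{N+2}(0)\leq \mathcal{E}_{2N}(0)$, one arrives at \eqref{6.13}. The main technical obstacle will be step two: tracking the correct Sobolev interpolation indices so that the exponent matches exactly the stated rate $4N-8$, while simultaneously ensuring that the many secondary contributions to $\mathcal{E}_{N+2}$ (the $u,b,p$ pieces and the lower $\partial_t^j\eta$ pieces) are indeed absorbed by $\mathcal{D}_{N+2}$ without generating worse interpolation exponents; the choice of boundary regularity $2(N+2)-1/2$ built into $\mathcal{D}_{N+2}$ is precisely what makes the single borderline term $\|\eta\|_{2N+4}$ the only interpolated quantity.
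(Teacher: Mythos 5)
Your step one (the clean inequality $\partial_t\tilde{\mathcal{E}}_{N+2}+c\,\mathcal{D}_{N+2}\leq 0$ via Lemmas \ref{lemm5.5}, \ref{lemm5.7}, \ref{lemm6.1}, \ref{lemm5.8}, \ref{lemm5.9}) and your overall Bernoulli-ODE strategy coincide with the paper. The genuine gap is in step two: the claimed inequality $\mathcal{E}_{N+2}\lesssim \mathcal{D}_{N+2}^{\frac{4N-7}{4N-6}}\mathcal{F}_{2N}^{\frac{1}{4N-6}}$ is false as stated. Your own reasoning only shows that the non-borderline contributions to $\mathcal{E}_{N+2}$ (all the $u,b,p$ terms and $\partial_t^j\eta$ for $j\geq1$) are dominated by $\mathcal{D}_{N+2}$, and that the single borderline term obeys $\|\eta\|^2_{2N+4}\lesssim \mathcal{D}_{N+2}^{\frac{4N-7}{4N-6}}\mathcal{F}_{2N}^{\frac{1}{4N-6}}$; summing gives only $\mathcal{E}_{N+2}\lesssim \mathcal{D}_{N+2}+\mathcal{D}_{N+2}^{\frac{4N-7}{4N-6}}\mathcal{F}_{2N}^{\frac{1}{4N-6}}$. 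To absorb the first summand into the second you would need $\mathcal{D}_{N+2}\lesssim\mathcal{F}_{2N}$, which is not available: $\mathcal{F}_{2N}=\|\eta\|^2_{4N+1/2}$ contains no $u,b,p$ information, so a state with $\eta$ negligible but $u$ not defeats the product bound. The paper avoids exactly this by interpolating the borderline term against $\|\eta\|^2_{4N}\leq\mathcal{E}_{2N}$ instead of $\mathcal{F}_{2N}$: since \emph{every} term of $\mathcal{E}_{N+2}$ defined in \eqref{2.1} is dominated both by $\mathcal{D}_{N+2}$ from \eqref{2.2} and by $\mathcal{E}_{2N}$, one gets the clean bound $\mathcal{E}_{N+2}\lesssim \mathcal{D}_{N+2}^{\theta}\mathcal{E}_{2N}^{1-\theta}$ with $\theta=\frac{4N-8}{4N-7}$, and then $\mathcal{E}_{2N}\lesssim\mathcal{M}_0:=\mathcal{E}_{2N}(0)+\mathcal{F}_{2N}(0)$ (from \eqref{6.8}) yields the autonomous inequality $\frac{d}{dt}\mathcal{E}_{N+2}+C\mathcal{M}_0^{-s}\mathcal{E}_{N+2}^{1+s}\leq 0$, $s=\frac{1}{4N-8}$, which integrates directly to \eqref{6.13}; no use of Proposition \ref{prop6.1} or of the growth of $\mathcal{F}_{2N}$ is needed. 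Your time-weighted variant could in principle be salvaged by a two-regime argument (either $\mathcal{E}_{N+2}\leq 2C\mathcal{D}_{N+2}$ or $\mathcal{E}_{N+2}\leq 2C\mathcal{D}_{N+2}^{\alpha}\mathcal{F}_{2N}^{1-\alpha}$ at each time), but that case analysis is missing and is precisely the extra work the paper's choice of $\mathcal{E}_{2N}$ renders unnecessary.

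A secondary, easily repaired slip: in the displayed ODE you dropped the factor $\mathcal{M}_0^{p}$ that your own step three produces (via $\mathcal{F}_{2N}(t)\lesssim(1+t)\mathcal{M}_0$). Integrating $\partial_t\tilde{\mathcal{E}}_{N+2}+C(1+t)^{-p}\tilde{\mathcal{E}}_{N+2}^{1+p}\leq0$ only yields $\mathcal{E}_{N+2}(t)\lesssim(1+t)^{-(4N-8)}$ with a universal constant, which is weaker than \eqref{6.13}: the statement (and the closing of the $\mathcal{G}_{2N}$ bootstrap in the final theorem) requires the decay coefficient to be proportional to $\mathcal{E}_{2N}(0)+\mathcal{F}_{2N}(0)$, and the cheap remark $\mathcal{E}_{N+2}(0)\leq\mathcal{E}_{2N}(0)$ does not restore this for large $t$. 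Keeping $\mathcal{M}_0^{p}$ in the denominator throughout the integration, as the paper does with $\mathcal{M}_0^{s}$, gives $\mathcal{E}_{N+2}(t)\lesssim\mathcal{M}_0(1+t)^{-(4N-8)}$ and hence the stated estimate.
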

\begin{proof}
Fix $0\leq t\leq T$.
 According to Lemma \ref{lemm5.5}, Lemma \ref{lemm5.7} and Lemma \eqref{lemm6.1}, we know
\begin{equation}\label{6.14}
\begin{aligned}
\partial_t\left(\bar{\mathcal{E}}^{0}_{N+2}(t)+  \bar{\mathcal{E}}_{N+2}(t)\right)+ \bar{\mathcal{D}}^{0}_{N+2}
+\bar{\mathcal{D}}_{N+2}
\leq \mathcal{E}^{\theta/2}_{2N} \mathcal{D}_{N+2}+\sqrt{\mathcal{E}_{2N}}\mathcal{D}_{N+2}.
\end{aligned}
\end{equation}
 Let us assume that $\delta\in(0,1)$ is as small as in Corollary \ref{coro6.1}, thus we have $\mathcal E_{2N}(t)\leq \mathcal{G}_{2N}(T)\leq \delta$. Similar in \eqref{0.2}, we can obtain
\begin{equation}\label{0.3}
\mathcal{D}_{N+2}\lesssim\bar{\mathcal{D}}^0_{N+2} + \bar{\mathcal{D}}_{N+2}.
\end{equation}
Thus, combining \eqref{0.2}, \eqref{0.3} and \eqref{6.14} we deduce
\begin{equation}\label{6.15}
\partial_t\mathcal{E}_{N+2}+\mathcal{D}_{N+2}\lesssim \mathcal{E}^{\theta/2}_{2N} \mathcal{D}_{N+2}+\sqrt{\mathcal{E}_{2N}}\mathcal{D}_{N+2}\lesssim \delta^{\theta/2}\mathcal{D}_{N+2}+\sqrt{\delta}\mathcal{D}_{N+2}.
\end{equation}
Hence, if $\delta$ is small enough, we obtain
\begin{equation}\label{6.16}
\partial_t\mathcal{E}_{N+2}+\mathcal{D}_{N+2}\leq 0.
\end{equation}
On the other hand, based on the Sobolev interpolation inequality we can prove
\begin{equation}\label{6.16}
\mathcal{E}_{N+2}\lesssim \mathcal{D}_{N+2}^{\theta}\mathcal{E}_{2N}^{1-\theta},~~where ~\theta=\frac{4N-8}{4N-7}.
\end{equation}
Now since we know that the boundness of high energy estimate Proposition 6.2, we get
\begin{equation}\label{6.17}
\sup_{0\leq r \leq t}\mathcal{E}_{2N}(r) \lesssim \mathcal{E}_{2N}(0)+ \mathcal{F}_{2N}(0):=\mathcal{M}_{0},~~where ~\theta=\frac{4N-8}{4N-7}.
\end{equation}
we obtain form \eqref{6.16} that
\begin{equation}\label{6.18}
\mathcal{E}_{N+2} \lesssim \mathcal{M}^{1-\theta} \mathcal{D}^{\theta}_{N+2}.
\end{equation}
Hence by \eqref{6.17} and \eqref{6.15}, there exists some constant $C_{1}>0$ such that
\begin{equation}\label{6.19}
\frac{d}{dt}\mathcal{E}_{N+2} + \frac{C_{1}}{\mathcal{M}^{s}_{0}} \mathcal{E}^{1+s}_{N+2} \lesssim 0, ~~~where
~s=\frac{1}{\theta}-1=\frac{1}{4N-8},
\end{equation}
Solving this differential inequality directly, we obtain
\begin{equation}\label{6.20}
\mathcal{E}_{N+2}(t) \lesssim \frac{\mathcal{M}_{0}}{(\mathcal{M}^{s}_{0}+ s C_{1} (\mathcal{E}_{N+2}(0))^{s} t)^{1/s}}  \mathcal{E}_{N+2}(0).
\end{equation}
Using that $\mathcal{E}_{N+2}(0)\lesssim \mathcal{M}_{0}$ and the fact $1/s=4n-8>1$, we obtain that
\begin{equation}\label{6.21}
\mathcal{E}_{N+2}(t) \lesssim \frac{\mathcal{M}_{0}}{1+ s C_{1}  t)^{1/s}} \lesssim \frac{\mathcal{M}_{0}}{1+t)^{1/s}}\lesssim \frac{\mathcal{M}_{0}}{1+t)^{4N-8}}.
\end{equation}
This implies \eqref{6.13}

\end{proof}

Now we combine proposition to arrive at our ultimate energy estimates for $\mathcal{G}_{2N}$.

\begin{theo}
There exists a universal $0<\delta<1$ so that if $\mathcal{G}_{2N}(T)\leq \delta$, then
\begin{equation}\label{6.22}
\mathcal{G}_{2N}(t) \lesssim  \mathcal{E}_{2N}(0)+ \mathcal{F}_{2N}(0)~~for~all~0\leq t \leq T.
\end{equation}
\end{theo}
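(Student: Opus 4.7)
The plan is to recognize this theorem as a packaging statement: the functional $\mathcal{G}_{2N}(t)$ consists of four pieces, and Propositions 6.2 and 6.3, already established in the preceding subsection, together control all of them. The only real work is to fix a single smallness threshold $\delta\in(0,1)$ that simultaneously satisfies the hypotheses of both propositions (and, transitively, of Proposition \ref{prop6.1} and Corollary \ref{coro6.1}, on which they depend). With such a $\delta$, whenever $\mathcal{G}_{2N}(T)\leq\delta$ we have access to every estimate proved earlier in the section.

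First I would apply Proposition 6.2, which delivers exactly three of the four pieces of $\mathcal{G}_{2N}(t)$ in a single shot:
\begin{equation*}
\sup_{0\leq r\leq t}\mathcal{E}_{2N}(r) + \int_{0}^{t}\mathcal{D}_{2N}(r)\,dr + \sup_{0\leq r\leq t}\frac{\mathcal{F}_{2N}(r)}{1+r} \lesssim \mathcal{E}_{2N}(0)+\mathcal{F}_{2N}(0).
\end{equation*}
Next I would handle the remaining decay term $\sup_{0\leq r\leq t}(1+r)^{4N-8}\mathcal{E}_{N+2}(r)$ via Proposition 6.3, which yields the pointwise bound $(1+s^{4N-8})\mathcal{E}_{N+2}(s)\lesssim \mathcal{E}_{2N}(0)+\mathcal{F}_{2N}(0)$ for each $s\in[0,T]$. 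Since $N\geq 3$ gives $4N-8\geq 4\geq 0$, the elementary inequality $(1+s)^{4N-8}\leq C_N(1+s^{4N-8})$ holds, so the right-hand side of the Proposition 6.3 estimate is independent of $s$, and taking the supremum over $s\in[0,t]$ is harmless. Summing the two estimates produces \eqref{6.22}.

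The main (essentially only) obstacle is the bookkeeping of the smallness constant: one must verify that the $\delta$ appearing in Corollary \ref{coro6.1}, in Proposition 6.2, and in Proposition 6.3 can be taken to be a common universal constant, and that the bootstrap structure (in which each proposition absorbs a small multiple of $\mathcal{D}_{2N}$ or $\mathcal{E}_{N+2}$ back into the left-hand side) remains consistent when the four estimates are combined additively. Because every such absorption in Section 5--6 has already been performed internally to the stated propositions, this verification is routine, and no further PDE analysis is required at this stage.
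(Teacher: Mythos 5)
Your proposal is correct and follows essentially the same route as the paper, whose proof is precisely the observation that \eqref{6.22} follows from the definition of $\mathcal{G}_{2N}$ together with Propositions 6.1--6.3 under a common smallness threshold $\delta$. Your extra remark comparing $(1+s)^{4N-8}$ with $1+s^{4N-8}$ is a harmless refinement of the same bookkeeping the paper leaves implicit.
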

\begin{proof}
The conclusion follows directly from the definition of $\mathcal{G}_{2N}$ and Proposition \ref{prop6.1}-Proposition \ref{prop6.3}.
\end{proof}

\section{Appendix A. Analytic Tools}
\subsection*{A.1 Harmonic Extension}
We define the appropriate Poisson integral in $\mathbb{T}\times(-\infty,0)$ by
\begin{equation}\label{7.1}
\mathcal{P} \eta(x)= \sum_{n\in (L_{1}^{-1} \mathbb{Z}) \times (L_{2}^{-1} \mathbb{Z})} e^{2\pi i n \cdot x'} e^{2\pi |n| x_{3}} \hat{\eta}(n),
\end{equation}
where we have written
\begin{equation*}
\hat{\eta} (n)= \int_{\Sigma} \eta(x')\frac{e^{-2\pi i n\cdot x'}}{L_{1} L_{2}} dx'.
\end{equation*}
It is well known that $\mathcal{P}:H^s(\Sigma)\rightarrow H^{s+1/2}(\mathbb{T}\times(-\infty,0))$ is a bounded linear operator for $s>0$. However, if restricted to the domain $\Omega$, one has the following result.
\begin{lemm}\label{lemmA.4}
It holds that for all $s\in\mathbb{R}$,
\begin{equation}\label{A.5}
\|\mathcal{P}f\|_s\lesssim |f|_{s-1/2}.
\end{equation}
\end{lemm}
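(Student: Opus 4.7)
The plan is to prove the estimate by direct Fourier analysis, exploiting the explicit representation \eqref{7.1} together with the fact that $\Omega = \Sigma \times (-1, 0)$ is bounded in the vertical direction. The key observation is that for each tangential frequency $n$, the vertical profile $e^{2\pi|n|x_3}$ on the compact interval $(-1,0)$ provides integration in $x_3$ that converts one power of $|n|$ in the exponential decay into the half-derivative gain.

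First I would treat the case of nonnegative integer $s = k$. Using the representation \eqref{7.1}, I compute that any derivative $\partial^{\alpha}$ with $|\alpha| = k$ applied to $\mathcal{P}f$ produces a Fourier series in which each coefficient carries factors of $2\pi i n_j$ or $2\pi|n|$, giving the bound
\begin{equation*}
|\widehat{\partial^{\alpha}\mathcal{P}f}(n, x_3)|^2 \lesssim (1 + |n|^2)^{k}\, e^{4\pi|n|x_3}\, |\hat{f}(n)|^2.
\end{equation*}
Integrating in $x_3 \in (-1,0)$ and using
\begin{equation*}
\int_{-1}^0 e^{4\pi|n|x_3}\,dx_3 \;=\; \frac{1 - e^{-4\pi|n|}}{4\pi|n|} \;\lesssim\; (1 + |n|^2)^{-1/2}
\end{equation*}
(with the zero mode contributing at most $\sim 1 \lesssim (1+|0|^2)^{-1/2}$), Parseval on $\Sigma$ yields $\|\partial^{\alpha}\mathcal{P}f\|_{L^2(\Omega)}^2 \lesssim \sum_n (1+|n|^2)^{k-1/2} |\hat{f}(n)|^2 = |f|_{k-1/2}^2$. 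Summing over $|\alpha| \leq k$ gives the desired $\|\mathcal{P}f\|_{k} \lesssim |f|_{k-1/2}$. General $s \geq 0$ then follows by interpolation.

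For $s < 0$, I would argue by duality. Writing $\|\mathcal{P}f\|_{s} = \sup \langle \mathcal{P}f, \psi \rangle_{\Omega}$ over $\psi \in C_c^\infty(\Omega)$ with $\|\psi\|_{-s} \leq 1$, expand $\psi(x', x_3) = \sum_n e^{2\pi i n \cdot x'} \hat{\psi}(n, x_3)$ and compute the pairing as
\begin{equation*}
\langle \mathcal{P}f, \psi \rangle_{\Omega} = \sum_n \hat{f}(n)\,\overline{g(n)}, \qquad g(n) := \int_{-1}^0 e^{2\pi|n|x_3}\,\overline{\hat{\psi}(n, x_3)}\,dx_3.
\end{equation*}
Cauchy--Schwarz in $x_3$ gives $|g(n)| \lesssim (1+|n|^2)^{-1/4}\, \|\hat{\psi}(n,\cdot)\|_{L^2((-1,0))}$, so a second Cauchy--Schwarz in $n$ with weights $(1+|n|^2)^{s-1/2}$ against $(1+|n|^2)^{-s+1/2}$ yields $|\langle \mathcal{P}f, \psi\rangle| \lesssim |f|_{s-1/2}\,\|\psi\|_{-s}$, where the $\psi$ factor is controlled by the nonnegative-$s$ case already proved (applied to $\psi$ rather than to $\mathcal{P}f$, using duality of $H^{-s}(\Omega)$ with a subspace of $H^{s}(\Omega)$).

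The only real obstacle is the careful treatment of the negative-index duality, in particular ensuring the dual pairing is well-defined and that the constant is independent of $s$; the compactness of $(-1,0)$ is what saves the low-frequency regime, since the standard half-space Poisson estimate degenerates there. Once the bound $\|\mathcal{P}f\|_s \lesssim |f|_{s-1/2}$ is obtained for integer $k \geq 0$, for general $s > 0$ by interpolation, and for $s \leq 0$ by duality, the proof is complete.
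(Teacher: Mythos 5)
Your argument is correct and is essentially the proof the paper relies on: the paper itself gives no proof but cites \cite{Guo}, where the estimate is established by exactly this direct Fourier computation, with $\int_{-1}^{0}e^{4\pi|n|x_3}\,dx_3\lesssim (1+|n|^2)^{-1/2}$ supplying the half-derivative gain and the bounded depth taking care of the $n=0$ mode. Your worry about an $s$-independent constant in the negative-order duality step is unnecessary, since the implicit constant is allowed to depend on $s$.
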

\begin{proof}
See \cite{Guo}
\end{proof}
\subsection*{A.2 Elliptic Estimates}

\begin{lemm}\label{lemmA.1}
Suppose $u\in H^r(\Omega)$ solve
\begin{equation}\label{A.1}
\left\{
\begin{aligned}
&-\mu\Delta u=f\in H^{r-2}(\Omega),\\
&u|_{\Sigma\cup\Sigma_{-1}}=0.
\end{aligned}
\right.
\end{equation}
then for $r\geq2$, one has
\begin{equation}
\|u\|_{r}\lesssim \|f\|_{r-2}.
\end{equation}
\end{lemm}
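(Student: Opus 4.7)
The plan is to reduce to the case $r=2$ and then induct on $r$, exploiting the fact that horizontal differentiation is tangential to both components of the boundary of the slab domain $\Omega = \Sigma \times (-1,0)$.

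For the base case $r=2$, since the domain has flat, horizontally periodic boundaries $\Sigma$ and $\Sigma_{-1}$, I would expand $u$ and $f$ in horizontal Fourier series $u(x',x_3) = \sum_n \hat u_n(x_3)\,e^{2\pi i n\cdot x'}$ and reduce \eqref{A.1} to a family of two-point boundary value problems
\[
-\mu \hat u_n'' + 4\pi^2\mu|n|^2 \hat u_n = \hat f_n \quad \text{on } (-1,0), \qquad \hat u_n(0) = \hat u_n(-1) = 0.
\]
Testing each such ODE against $\hat u_n$ and integrating by parts yields $\|\hat u_n'\|_{L^2}^2 + |n|^2\|\hat u_n\|_{L^2}^2 \lesssim \|\hat f_n\|_{L^2}\|\hat u_n\|_{L^2}$, and rearranging the ODE to isolate $\hat u_n''$ then gives $\|\hat u_n''\|_{L^2} \lesssim \|\hat f_n\|_{L^2} + |n|^2\|\hat u_n\|_{L^2}$. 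Summing over $n$ via Parseval produces $\|u\|_2 \lesssim \|f\|_0$. Alternatively one may simply invoke the classical $H^2$ elliptic regularity theorem for the Dirichlet Laplacian on a smooth domain, to which the slab is easily reduced.

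For the inductive step $r \geq 3$, I would use that for $i \in \{1,2\}$ the derivative $\partial_i$ commutes with $\Delta$ and preserves the Dirichlet trace, so $\partial_i u$ solves the same problem with right-hand side $\partial_i f \in H^{r-3}$. The inductive hypothesis then gives
\[
\|\partial_i u\|_{r-1} \lesssim \|\partial_i f\|_{r-3} \lesssim \|f\|_{r-2},
\]
which controls all $r$-th derivatives of $u$ containing at least one horizontal factor. The purely vertical derivative $\partial_3^r u$ is recovered algebraically from the PDE: since $-\mu \partial_3^2 u = f + \mu \Delta_\star u$, differentiating $(r-2)$ times in $x_3$ yields
\[
\|\partial_3^r u\|_0 \lesssim \|f\|_{r-2} + \|\Delta_\star u\|_{r-2},
\]
and the second term has already been estimated by the horizontal step. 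For non-integer $r$, a standard interpolation between the adjacent integer estimates closes the argument.

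The main (and essentially only) obstacle is the base case, which is classical; everything afterwards is bookkeeping built on the observation that horizontal derivatives act tangentially at both flat boundary components, so they commute with both the equation and the boundary conditions. The author's omission of the proof is thus justified by its appearance in standard references on elliptic regularity.
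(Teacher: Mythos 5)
Your argument is correct, but it is worth noting that the paper does not prove this lemma at all: it simply cites the classical elliptic regularity theory (Agmon-type estimates for the Dirichlet Laplacian), which applies on general smooth domains. Your route is genuinely different and more self-contained: you exploit the specific geometry of the slab $\Omega=\Sigma\times(-1,0)$ with horizontally periodic, flat boundary components, reducing the base case $r=2$ to explicit two-point boundary value problems for the Fourier modes and then inducting on $r$ via the tangential horizontal derivatives, recovering $\partial_3^r u$ algebraically from the equation. This buys an elementary, hands-on proof tailored to the flattened domain (the same tangential-derivative philosophy the paper uses elsewhere for its dissipation improvements), at the cost of not generalizing beyond the slab; the citation approach buys generality and brevity. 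Two small bookkeeping points you should make explicit if you write this up: the zero mode $n=0$ needs the one-dimensional Poincar\'e inequality coming from the Dirichlet conditions $\hat u_0(0)=\hat u_0(-1)=0$ to close the energy estimate, and the mixed derivatives require combining the energy bound $\|\hat u_n'\|_{L^2}^2\lesssim \|\hat f_n\|_{L^2}\|\hat u_n\|_{L^2}$ with the already-derived bound $|n|^2\|\hat u_n\|_{L^2}\lesssim\|\hat f_n\|_{L^2}$ to get $|n|\,\|\hat u_n'\|_{L^2}\lesssim\|\hat f_n\|_{L^2}$ before summing. Also, since the paper only ever invokes the lemma with integer $r$, the interpolation step for fractional $r$ is optional.
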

\begin{proof}
See [agmon].
\end{proof}

\begin{lemm}\label{lemmA.2}
Suppose $(u,p)$ solve
\begin{equation}\label{A.2}
\left\{
\begin{aligned}
&-\mu\Delta u+\nabla p=\phi \in H^{r-2}(\Omega),\\
&{\rm div}u=\psi\in H^{r-1}(\Omega),\\
&(pI-\mathbb{D}(u))e_3=\alpha\in H^{r-3/2}(\Sigma),
&u|_{\Sigma_{-1}}=0.
\end{aligned}
\right.
\end{equation}
Then for $r\geq2$, one has
\begin{equation*}
\|u\|^2_{H^r}+\|p\|^2_{H^{r-1}}\lesssim \|\phi\|^2_{H^{r-2}}+\|\psi\|^2_{H^{r-1}}+\|\alpha\|^2_{H^{r-3/2}}.
\end{equation*}
\end{lemm}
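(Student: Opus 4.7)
The plan is to reduce Lemma A.2 to a combination of a lift for the divergence, the base case $r=2$ from classical Stokes theory with mixed Dirichlet/stress boundary conditions, and a bootstrap using tangential difference quotients to climb to arbitrary $r \geq 2$.

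First I would handle the inhomogeneous divergence by constructing a Bogovskii-type lift $\bar u \in H^{r}(\Omega)$ with $\mathrm{div}\,\bar u = \psi$, $\bar u|_{\Sigma_{-1}} = 0$, $\bar u\cdot e_3|_{\Sigma}=0$, and $\|\bar u\|_{r} \lesssim \|\psi\|_{r-1}$; this is standard since $\Omega$ is a slab and $\psi$ need not have zero mean because the stress boundary condition permits nontrivial net flux. Then $w := u-\bar u$ and $p$ solve the divergence-free Stokes system
\begin{equation*}
-\mu\Delta w + \nabla p = \phi + \mu\Delta\bar u =: \tilde\phi, \qquad \mathrm{div}\,w = 0,
\end{equation*}
with $w|_{\Sigma_{-1}} = 0$ and $(pI-\mathbb{D}(w))e_3 = \alpha + \mathbb{D}(\bar u)e_3 =: \tilde\alpha$ on $\Sigma$, where $\|\tilde\phi\|_{r-2} + \|\tilde\alpha\|_{r-3/2} \lesssim \|\phi\|_{r-2}+\|\psi\|_{r-1}+\|\alpha\|_{r-3/2}$. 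Thus it suffices to prove the estimate for the homogeneous-divergence problem.

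Next, for the base case $r=2$, I would use the variational formulation: test the momentum equation against $w$, integrate by parts, and invoke Korn's inequality together with the Dirichlet condition on $\Sigma_{-1}$ to obtain $H^1$ control of $w$ and (via the inf-sup/De Rham argument) $L^2$ control of $p$ in terms of $\|\tilde\phi\|_0$ and $\|\tilde\alpha\|_{-1/2}$. Then I would upgrade to $H^2\times H^1$ by localization: in the interior use classical interior Stokes regularity; near $\Sigma_{-1}$ flatness gives us standard Dirichlet Stokes theory; near $\Sigma$ apply the stress-boundary Stokes estimate (e.g.\ as in Beale \cite{Beale} or Solonnikov's theory). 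Patching with a partition of unity produces the $r=2$ bound.

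Finally, for $r>2$ I would bootstrap by taking horizontal difference quotients $D_h^\tau$ (or directly horizontal derivatives $\partial_1,\partial_2$, which commute with the flat boundaries $\Sigma,\Sigma_{-1}$): applying $\partial^\beta$ with $|\beta|=r-2$ and $\beta_3 = 0$ to the system yields a Stokes system of the same type with right-hand sides in $H^0$, $H^1$, $H^{1/2}$, which by the $r=2$ estimate gives control of all horizontal derivatives of $(w,p)$ up to the target order. The remaining pure vertical derivatives are then recovered algebraically from the equations: $\partial_{33}w$ from the momentum equation (using that $-\mu\partial_{33}w_i = \tilde\phi_i + \mu(\partial_{11}+\partial_{22})w_i - \partial_i p$ for $i=1,2$ and the divergence-free condition to solve for $\partial_3 w_3$ and hence $\partial_{33}w_3$), and $\partial_3 p$ directly from the third component of the momentum equation. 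Iterating this scheme one derivative at a time yields the claim for all integer $r\geq 2$, and the general real case follows by interpolation. The main technical obstacle I expect is the careful handling of the stress boundary condition under differentiation: horizontal derivatives preserve the boundary condition cleanly, but one must make sure the lift $\bar u$ is chosen so that the reduced boundary data $\tilde\alpha$ has the claimed $H^{r-3/2}(\Sigma)$ regularity, which is where the tangential smoothness of $\psi$ enters through trace estimates.
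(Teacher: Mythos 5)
Your proposal is essentially correct, but it is worth noting that the paper does not prove Lemma \ref{lemmA.2} at all: its ``proof'' is a citation to \cite{Guo}, where the estimate is obtained from the classical regularity theory for the Stokes operator with mixed Dirichlet/stress boundary conditions (localization plus Agmon--Douglis--Nirenberg/Solonnikov-type estimates near the stress boundary, in the spirit of \cite{Beale}). Your argument follows the same standard architecture in a more explicit way: divergence lift, variational base case, tangential bootstrap on the flat slab, and algebraic recovery of $\partial_3$-derivatives of $u$ and of $\partial_3 p$ from the equations, which is correct and is exactly how such slab estimates are usually proved. Two caveats. First, your lift is overdetermined as stated: if you impose both $\bar u|_{\Sigma_{-1}}=0$ and $\bar u\cdot e_3|_{\Sigma}=0$, the divergence theorem forces $\int_\Omega\psi=0$, contradicting your (correct) remark that $\psi$ need not have zero mean. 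The fix is to drop the normal-trace condition on $\Sigma$ (it is never used, since $\mathbb{D}(\bar u)e_3$ is absorbed into $\tilde\alpha$ anyway): take a smooth vertical flux corrector $c\,\theta(x_3)e_3$ to remove the mean of $\psi$ and apply a Bogovskii-type operator, with zero trace on all of $\partial\Omega$, to the mean-zero remainder; note that the naive lift $\bar u_3=\int_{-1}^{x_3}\psi$ does not have the claimed $H^r$ regularity, so the Bogovskii gain of one derivative is genuinely needed. Second, the crux of the whole lemma is the localized estimate near $\Sigma$ with the boundary operator $(pI-\mathbb{D}(u))e_3$, and at that point you quote ``stress-boundary Stokes theory'' from the literature; this is legitimate, but it means your proof ultimately rests on the same external input as the paper's citation, so it is an expanded reduction to that theory rather than a more self-contained argument. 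The interpolation step for non-integer $r$ is harmless since the data-to-solution map is linear, although in this paper the lemma is only ever invoked with integer interior indices.
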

\begin{proof}
See \cite{Guo}
\end{proof}

\begin{lemm}\label{A.3}
Suppose $r\geq 2$ and let $\phi \in H^{r-2}(\Omega)$, $\psi \in H^{r-1}(\Omega)$, $f_1\in H^{r-1/2}(\Sigma)$, $f_2\in H^{r-1/2}(\Sigma_{-1})$ be given such that
\begin{equation*}
\int_{\Omega}\psi=\int_{\Sigma}f_1\cdot\nu+\int_{\Sigma_{-1}}f_2\cdot\nu.
\end{equation*}
Then there exists unique  $ u\in H^{r}(\Omega)$, $p \in H^{r-1}(\Omega)$ solving
\begin{equation}\label{A.4}
\left\{
\begin{aligned}
&-\mu\Delta u+\nabla p=\phi~~{\rm in}~\Omega\\
&{\rm div}u=\psi,~~~~~~~~~~~~{\rm in}~\Omega\\
&u=f_1,~~~~~~~~~~~~~~~{\rm on}~\Sigma\\
&u=f_2,~~~~~~~~~~~~~~~{\rm on}~\Sigma_{-1}.
\end{aligned}
\right.
\end{equation}
Moreover,
\begin{equation*}
\|u\|^2_{H^r(\Omega)}+\|\nabla p\|^2_{H^{r-2}(\Omega)}\lesssim \|\phi\|^2_{H^{r-2}(\Omega)}+\|\psi\|^2_{H^{r-1}(\Omega)}+\|f_1\|^2_{H^{r-1/2}(\Sigma)}+\|f_2\|^2_{H^{r-1/2}(\Sigma_{-1})}.
\end{equation*}
\end{lemm}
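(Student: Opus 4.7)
The plan is to reduce Lemma A.3 to the homogeneous Dirichlet Stokes problem with divergence-free velocity by two successive lifts, and then invoke the classical Stokes theory in the fixed domain $\Omega$. The compatibility condition is what permits the second lift to exist.

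First I would lift the boundary data. By standard trace theory on $\Omega=\Sigma\times(-1,0)$ (whose boundary components $\Sigma$ and $\Sigma_{-1}$ are smooth tori), there is a bounded right inverse of the trace map, so I can pick $w_1\in H^{r}(\Omega)$ with $w_1|_\Sigma=f_1$, $w_1|_{\Sigma_{-1}}=f_2$, and $\|w_1\|_r^2\lesssim \|f_1\|^2_{r-1/2}+\|f_2\|^2_{r-1/2}$. Setting $\tilde u=u-w_1$ and $\tilde\psi=\psi-\mathrm{div}\,w_1\in H^{r-1}(\Omega)$, the divergence theorem together with the hypothesis gives
\begin{equation*}
\int_\Omega \tilde\psi=\int_\Omega\psi-\int_\Sigma f_1\cdot\nu-\int_{\Sigma_{-1}}f_2\cdot\nu=0.
\end{equation*}

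Next I would absorb the divergence constraint using a Bogovskii-type operator. Because $\Omega$ is a smooth periodic slab and $\tilde\psi$ has vanishing mean, there exists $w_2\in H^r(\Omega)$ with $\mathrm{div}\,w_2=\tilde\psi$, $w_2|_{\partial\Omega}=0$, and $\|w_2\|_r\lesssim \|\tilde\psi\|_{r-1}$; this is the classical result of Bogovskii (or a direct construction via the Poisson problem for a scalar potential combined with the trace lift). Setting $v=\tilde u-w_2=u-w_1-w_2$, the pair $(v,p)$ must then satisfy the \emph{homogeneous} Dirichlet, divergence-free Stokes system
\begin{equation*}
-\mu\Delta v+\nabla p=\phi+\mu\Delta(w_1+w_2)=:\Phi,\quad \mathrm{div}\,v=0\text{ in }\Omega,\quad v=0\text{ on }\partial\Omega,
\end{equation*}
with $\Phi\in H^{r-2}(\Omega)$ and $\|\Phi\|_{r-2}\lesssim \|\phi\|_{r-2}+\|\psi\|_{r-1}+\|f_1\|_{r-1/2}+\|f_2\|_{r-1/2}$.

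For this homogeneous problem I would invoke the classical Cattabriga/Galdi regularity theory: there is a unique weak solution $(v,p)\in H^1_0(\Omega)\times L^2(\Omega)/\mathbb{R}$ produced by the Lax--Milgram lemma on divergence-free test fields, and the a priori estimate $\|v\|_r+\|\nabla p\|_{r-2}\lesssim \|\Phi\|_{r-2}$ follows by bootstrap from the interior/tangential regularity argument (differencing in the horizontal directions, then recovering normal regularity from the equation and the solenoidal condition, using the smoothness of $\partial\Omega$). Uniqueness for the full system follows by subtracting two solutions, observing the difference satisfies the homogeneous problem with $\Phi=0$, and testing against $v$ itself, which yields $\nabla v=0$ and hence $v=0$, $\nabla p=0$. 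Combining the three construction steps and summing the three estimates delivers the claimed bound. The main technical point is the second step: ensuring that the Bogovskii-type right inverse of $\mathrm{div}$ is bounded $H^{r-1}_{(0)}\to H^r\cap H^1_0$ at the required regularity $r\geq 2$; this is where the zero-average compatibility condition is consumed, and where one must rely on the smoothness of the slab $\Omega$ (no corners, fully periodic horizontally).
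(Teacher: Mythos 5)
The paper does not actually prove this lemma: its ``proof'' is the single line ``See \cite{Tan}'', i.e.\ a citation to Tan--Wang, where the statement is in turn traced back to the classical Cattabriga/Agmon--Douglis--Nirenberg theory for the Stokes system with inhomogeneous divergence and Dirichlet data. Your argument is therefore not comparable to a proof in the paper, but it is a correct and standard way to establish the result: the trace lift, the verification that the compatibility condition makes $\tilde\psi$ mean-zero, the Bogovski\u{\i} correction, and the reduction to the homogeneous divergence-free Dirichlet problem are all sound, and the final estimate follows by summing the three bounds. The one point you rightly flag as the technical crux deserves emphasis: the Bogovski\u{\i} operator's higher-order boundedness $W^{m,q}_0\to W^{m+1,q}_0$ in the standard references requires the datum to lie in $W^{m,q}_0$ (vanishing traces), whereas your $\tilde\psi\in H^{r-1}(\Omega)$ need not vanish on $\partial\Omega$. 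For the smooth periodic slab $\Omega$ this is repairable --- e.g.\ take $w_2=\nabla\theta+\text{(divergence-free trace correction)}$ with $\theta$ solving the Neumann problem $\Delta\theta=\tilde\psi$, $\partial_\nu\theta=\text{const}$, or simply observe that the inhomogeneous-divergence Stokes problem with zero boundary data is itself covered by Cattabriga's original theorem, which is what the chain of citations ultimately invokes. Either way your route is legitimate and, unlike the paper, actually exhibits where the compatibility condition is consumed.
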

\begin{proof}
See \cite{Tan}
\end{proof}

\phantomsection
\addcontentsline{toc}{section}{\refname}

\end{document}